\newcommand{\zs}{{\mathbb Z}} 
\newcommand{\cs}{{\mathbb C}} 
\newcommand{\rs}{{\mathbb R}} 
\newcommand{\PB}{B}
   \renewcommand\Im{\operatorname{Im}}
\newcommand{\ga}{\gamma}
\newcommand{\bU}{\bar U}
\newcommand{\bu}{\bar u}
\newcommand{\E}{\mathbb{E}}
\newcommand{\Var}{\mathbb{V}}
\newcommand{\wt}{\widetilde w}
\newcommand{\cD}{\mathcal D}
\newcommand{\cI}{\mathcal I}
\newcommand{\cL}{\mathcal L}
\newcommand{\cP}{\mathcal P}
\newcommand{\cE}{\mathcal E}
\newcommand{\cS}{\mathcal S}
\newcommand{\cT}{\mathcal T}
\newtheorem{Theorem}{Theorem}
\newtheorem{Proposition}[Theorem]{Proposition}
\newtheorem{Definition}[Theorem]{Definition}
\newtheorem{Lemma}[Theorem]{Lemma}
\newcommand{\beq}{\begin{equation}}
\newcommand{\eeq}{\end{equation}}
\newcommand{\gf}{generating function}
\newcommand{\gfs}{generating functions}
\newcommand{\saw}{self-avoiding walk}
\newcommand{\saws}{self-avoiding walks}
\def\emm#1,{{\em #1}}
 \def\NN{{\sf N}}
 \def\EE{{\sf E}}
 \def\SS{{\sf S}}
 \def\WW{{\sf W}}
\def\section{\@startsection{section}{1}%
 \z@{.7\linespacing\@plus\linespacing}{.5\linespacing}%
 {\normalfont\bfseries\scshape\centering}}
\def\subsection{\@startsection{subsection}{2}%
  \z@{.5\linespacing\@plus\linespacing}{.5\linespacing}%
  {\normalfont\bfseries\scshape}}
\def\subsubsection{\@startsection{subsubsection}{3}%
 \z@{.5\linespacing\@plus\linespacing}{-.5em}
  {\normalfont\bfseries\itshape}}
\def\qed{$\hfill{\vrule height 3pt width 5pt depth 2pt}$}
\begin{document}
\title
[Weakly directed self-avoiding walks]
{Weakly directed self-avoiding walks}

\author[A. Bacher]{Axel Bacher}

\author[M. Bousquet-M\'elou]{Mireille Bousquet-M\'elou}
\address{AB: LaBRI, Universit\'e Bordeaux 1, 
351 cours de la Lib\'eration, 33405 Talence, France}
\email{axel.bacher@labri.fr}
\address{MBM: CNRS, LaBRI, Universit\'e Bordeaux 1, 
351 cours de la Lib\'eration, 33405 Talence, France}
\email{mireille.bousquet@labri.fr}

\thanks{}

\keywords{Enumeration -- Self-avoiding walks}
\subjclass[2000]{05A15}

\begin{abstract}
We define a new family of self-avoiding walks (SAW) on the square
lattice, 
called \emph{weakly directed walks}. These walks have a simple
characterization in terms of the irreducible bridges that compose them.
We determine their generating function.
 This series  has a complex singularity 
structure and in particular, is not D-finite.
The growth constant is approximately
$2.54$ and is thus larger than that of all natural families of 
SAW enumerated so far (but smaller than that of general SAW, which is
about 2.64). We also
prove that the end-to-end distance  of weakly directed walks grows
linearly. Finally, we study a diagonal variant of this model.
\end{abstract}

\date{\today}
\maketitle


\section{Introduction}
A lattice walk is \emm  self-avoiding, if it never visits the
same vertex  twice (Fig. ~\ref{fig:saw}).
 Self-avoiding walks (SAW) have attracted interest for decades, first
 in statistical physics, where they are considered as polymer
models, and then in
combinatorics and in probability theory~\cite{madras-slade}.  However,
their properties remain poorly 
understood
in low dimension, despite the existence of remarkable conjectures.
See~\cite{madras-slade} for dimension 5 and above, 
and~\cite{brydges-slade} for recent progresses in 4 dimensions. 

\vskip -4mm 
\begin{figure}[htb]
\includegraphics[height=3cm]{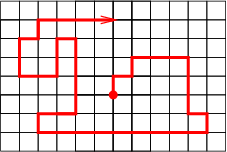}\hskip 20mm
\includegraphics[height=4cm]{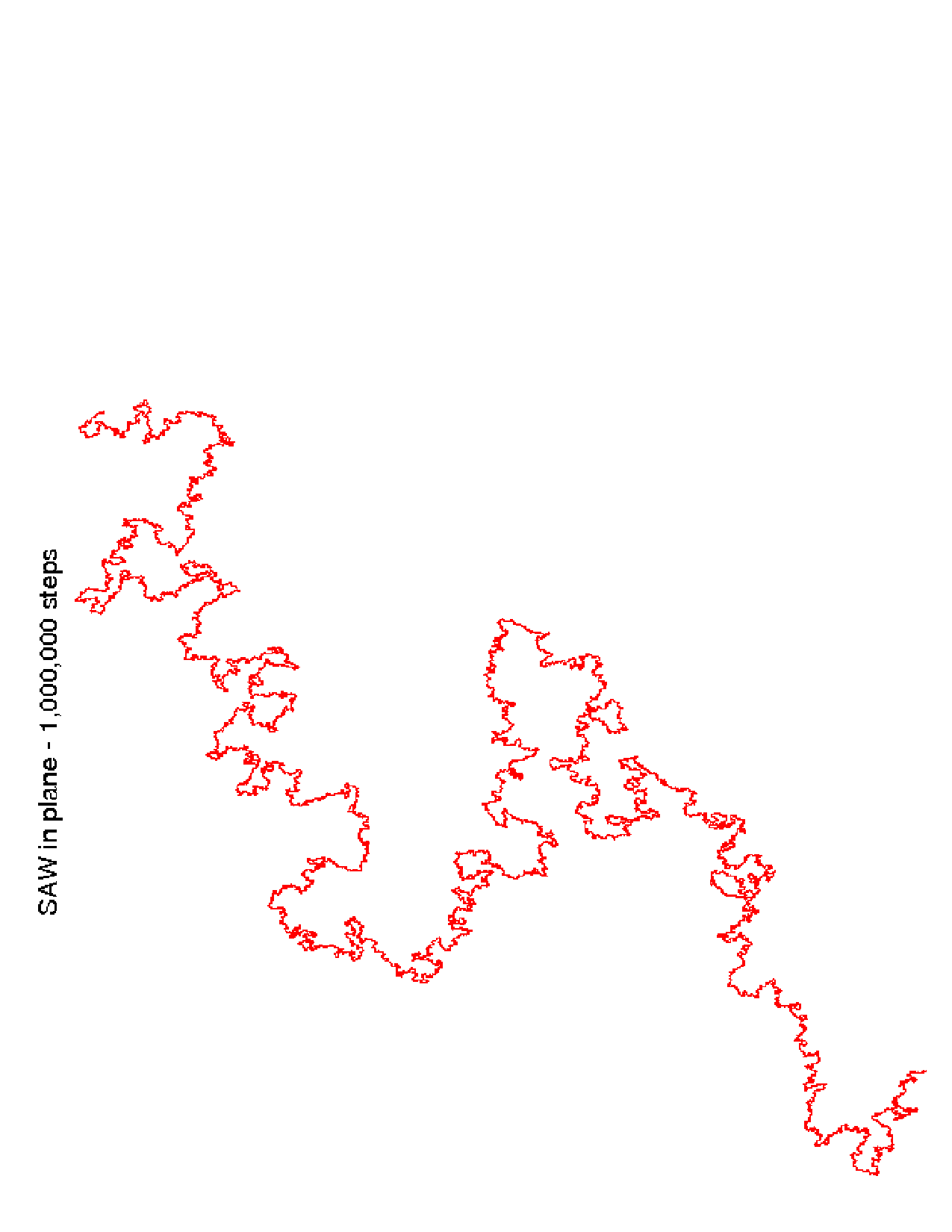}
\caption{A self-avoiding walk on the square lattice, and a
random SAW of length 1,000,000, constructed by Kennedy using a
  pivot algorithm~\cite{kennedy-pivot-SAW}.} 
\label{fig:saw}
\end{figure}

On two-dimensional lattices, it is strongly believed that  the number
$c_n$ of $n$-step SAW and the average end-to-end distance $D_n$ of these
walks satisfy
\beq\label{grail}
c_n\sim \alpha \mu^n n^\gamma \quad \hbox{and} \quad 
D_n \sim \kappa n^{\nu}
\eeq
where $\gamma=11/32$ and $\nu=3/4$. Several independent, but so far not
completely rigorous methods  predict these values, like
numerical studies~\cite{guttmann-conway2001,rechni-buks-saw}, comparisons with other models~\cite{de-gennes,nienhuis82}, probabilistic
arguments involving SLE processes~\cite{lawler-schramm-werner},
enumeration of SAW on random planar lattices~\cite{duplantier-kostov}... The \emm growth constant,
(or \emm connective constant,) $\mu$ is
lattice-dependent. It has recently
 been proved to be $\sqrt{2+\sqrt 2}$ for the 
honeycomb lattice~\cite{duminil-smirnov}, as predicted for almost 30
years, and might be another bi-quadratic number (approximately 
$2.64$) for the square lattice~\cite{jensen-guttmann}.

Given the difficulty of the problem, the study of \emm restricted,
classes of SAW  is natural, and probably as old
as the interest in SAW
itself. The rule of this game is to design  new classes of SAW that have both: 
\begin{itemize}
\item [--] a natural description (to be conceptually pleasant),
\item  [--] some  structure (so that the walks can be counted,
  and their asymptotic properties determined).
\end{itemize}
This paper fits in this program: we define and count a new large class
of SAW, called \emm weakly directed walks,. 

\begin{figure}[htb]
\includegraphics[scale=0.7]{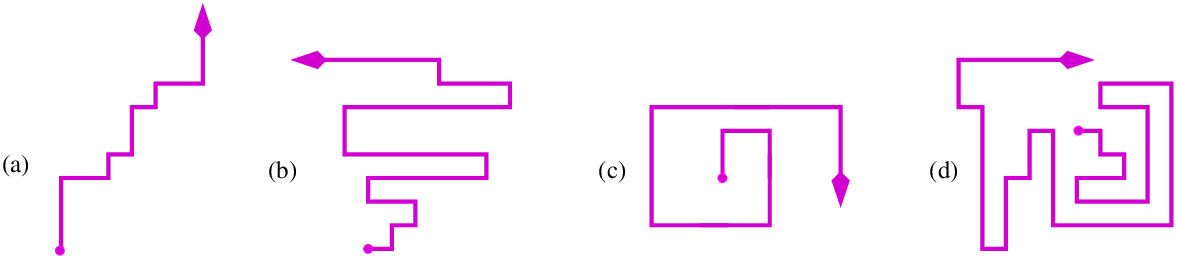}
\caption{(a) A directed walk. (b) A partially directed walk. 
(c) A spiral walk. (d) A prudent walk.} 
\label{fig:restricted}
\end{figure}

The two simplest classes of SAW  on the square lattice probably
consist of \emm directed, and \emm 
partially directed, walks: a walk is directed if it involves at most
two types of steps (for instance North and East), and partially
directed if it involves at most three 
types of steps (Fig.~\ref{fig:restricted}(a-b)).  Partially directed
walks play a prominent role in the definition of our \emm weakly directed, walks. 
Among other solved classes, let us cite  
spiral SAW~\cite{privman-spiral,guttmann-wormald-spiral} and prudent
walks~\cite{mbm-prudent,duchi,guttmann-prudent}. We refer again to  Fig.~\ref{fig:restricted} for illustrations.
Each time such a new class is defined and solved, one
compares its properties to~\eqref{grail}: have we reached with this class
a large growth constant? Is the end-to-end distance of the
walks sub-linear?

At the moment, the largest growth constant (about $2.48$) is obtained
with prudent SAW. However, this is beaten by certain classes whose
description involves a (small) integer $k$, like SAW confined to a strip of
height $k$~\cite{alm-janson,zeilberger-skinny}, or SAW consisting of
\emm irreducible bridges, of length at most
$k$~\cite{jensen-bridges,kesten-bridges}.  The structure of these 
walks is rather poor, which makes them rather unattractive 
from a combinatorial viewpoint. In the former case,  they are described by a
transfer matrix (the size of which increases exponentially
with the height of the strip); in the latter case, the structure is
even simpler, since these walks are just arbitrary  sequences of
irreducible bridges of small length.  
In both cases, the \gf\ is rational.
  The growth constant increases  with $k$, providing better and better
  lower bounds on the growth constant of general SAW. The ability of solving these
  models  for larger values of $k$ mostly relies on
 progress in computer power. 
Regarding asymptotic properties, almost all
solved classes of SAW exhibit a linear end-to-end distance, with the
exception of spiral walks, which are designed so as to wind around
their origin. But there are very few such walks~\cite{guttmann-wormald-spiral}, as their growth
constant is 1.

With  the \emm weakly directed walks, of this paper, we reach a growth
constant of about $2.54$. These walks are defined in the next
section. Their \gf\ is given in Section~\ref{sec:gf}, after some preliminary
results on  partially directed \emm bridges,
(Sections~\ref{sec:culminant-kernel} and
\ref{sec:culminant-heaps}). This series turns out to be much more
complicated that the \gfs\ of  directed and partially directed walks,
which are  rational: we prove 
that it has a natural boundary in the complex plane, and in
particular is not D-finite (that is, it does not satisfy any linear
differential equation with polynomial coefficients).
However, we are able to derive from this series
certain  asymptotic properties of weakly directed walks, like their
growth constant and average end-to-end distance (which we find,
unfortunately, to grow linearly with the length).
Finally, we perform in Section~\ref{sec:diag} a similar study for 
a diagonal variant of weakly directed walks. 
Our  intuition told us that this variant would give a larger growth
constant, but we shall see that this is wrong.
Section~\ref{sec:comments} discusses a few more points, including
random generation.

An extended abstract of this paper appeared in the proceedings of the
2010 FPSAC conference~\cite{bacher-mbm-fpsac}. 

\section{Weakly directed walks: definition}
\label{sec:def}
Let us denote by \NN,  \EE, \SS\ and \WW\ the four square lattice
steps. All walks in this paper are self-avoiding, so that this
precision will often be omitted. 
For any subset $\cS$ of $\{ \NN, \EE,  \SS, \WW\}$, we say that
a (self-avoiding) walk is an $\cS$-\emm walk, if all its steps lie in
$\cS$. For instance, the first walk of Fig.~\ref{fig:restricted} is a
\NN\EE-walk, but also a \NN\EE\WW-walk. The second is  a
\NN\EE\WW-walk.  
We say that a SAW is \emm directed, if it involves at most two types
of steps, and \emm partially directed, if it involves at most three
types of steps. 

The definition of  \emm weakly directed, walks stems from 
the following simple observations:
\begin{itemize}
\item [(i)] between two visits to any given horizontal line, a
  \NN\EE-walk only takes \EE\ steps,
\item [(ii)] between two visits to any given horizontal line, a
  \NN\EE\WW-walk only takes \EE\ and \WW\ steps.
\end{itemize}
Conversely, a walk satisfies (i) if and only if it is either a
\NN\EE-walk or, symmetrically, a \SS\EE-walk. Similarly,  a walk satisfies (ii) if and only if it is either a
\NN\EE\WW-walk or, symmetrically, a \SS\EE\WW-walk. Conditions (i) and
(ii) thus respectively characterize (up to symmetry) \NN\EE-walks and
\NN\EE\WW-walks. 


\begin{Definition}
A walk is \emm weakly directed,
  if, between two visits to any given horizontal line, the walk is
  partially directed (that is, avoids at least one of the steps \NN, \EE, \SS, \WW).
\end{Definition}
Examples are shown in Fig.~\ref{fig:weak-h}.

\begin{figure}[htb]
\scalebox{0.7}{
\input{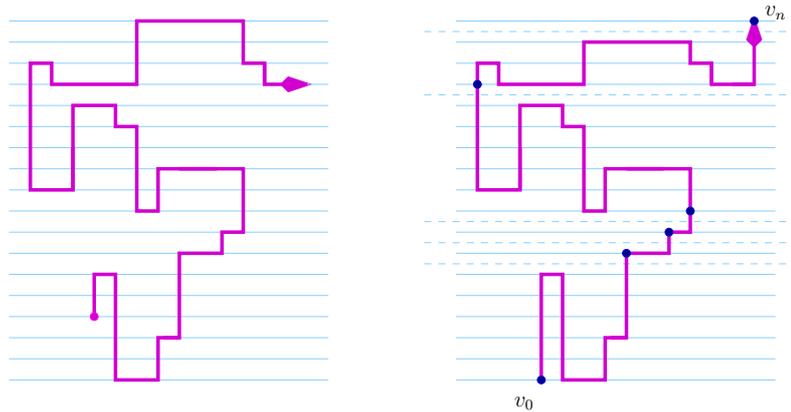}}
\caption{Two weakly directed walks. The second one is a bridge, formed
  of~5 irreducible bridges. Observe
  that these irreducible bridges are partially directed.}
\label{fig:weak-h}
\end{figure}

We will primarily focus on the enumeration of weakly directed \emm
bridges,. As we shall see, this does not affect the growth constant. A
\saw\ starting at $v_0$ and ending at $v_n$ is a \emm  bridge,  if all
its vertices $v\not =v_n$ satisfy $h(v_0)\le h(v) <h(v_n)$, where $h(v)$,
the \emm height, of $v$, is its ordinate. Concatenating two bridges
always gives a bridge. Conversely, every bridge can be uniquely
factored into a sequence of \emm irreducible, bridges (nonempty bridges that
cannot be written as the product of two nonempty bridges). This factorization
is obtained by cutting the walk above each horizontal line of height $n+1/2$
(with $n\in \zs$)
that the walk intersects only once (Fig.~\ref{fig:weak-h}, right). It is known
that the growth constant of bridges is the same as that of general
\saws~\cite{madras-slade}.  
 The fact that bridges
can be  freely concatenated makes them 
useful objects in the study of \saws~\cite{hammersley-welsh,jensen-bridges,kesten-bridges,lawler-schramm-werner,madras-slade}.   

The following result shows that the enumeration of weakly directed
bridges boils down to the enumeration of (irreducible) partially
directed bridges.  
It will be extended to general walks in Section~\ref{sec:gf}.

\begin{Proposition}\label{prop:equiv}
  A bridge is weakly directed if and only if each of its irreducible
  bridges is 
 partially directed (that is, avoids at least one of the steps \NN,
 \EE, \SS, \WW). 
In fact, this means that  each of its irreducible
  bridges is a \NN\EE\SS- or \NN\SS\WW-walk.
\end{Proposition}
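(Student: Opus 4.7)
My plan is to prove the two directions separately using the irreducible bridge decomposition.

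For the $(\Leftarrow)$ direction, assume each irreducible bridge of $B$ is partially directed. Given two vertices $u, u'$ of $B$ on the same horizontal line $y = h$, I would first show both belong to a single irreducible bridge $B_i$ of the decomposition $B = B_1 \cdots B_m$. Since the junction heights satisfy $h_0 < h_1 < \cdots < h_m$ and intermediate vertices of $B_i$ lie in $[h_{i-1}, h_i - 1]$, a vertex at height $h$ belongs only to the unique $B_i$ with $h_{i-1} \le h \le h_i - 1$ (the junction vertex at a height $h_j$ is a single vertex shared between $B_j$ and $B_{j+1}$). The subwalk from $u$ to $u'$ is therefore a subwalk of $B_i$ and inherits its partial directedness; hence $B$ is weakly directed.

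For the $(\Rightarrow)$ direction, I would proceed by induction on the height $H = h_i - h_{i-1}$ of the irreducible bridge $B_i$. The base case $H = 1$ is immediate: such a bridge consists of a monotone horizontal run on row $h_{i-1}$ (by self-avoidance along a line) followed by a single N-step, hence is directed. For the inductive step $H \ge 2$, suppose for contradiction that $B_i$ uses all four step types; equivalently both E and W, since N is forced by the bridge property and S is forced by irreducibility (every intermediate half-integer line $y = k + 1/2$ must be crossed at least twice). Let $v_L$ be the last vertex of $B_i$ on row $y = h_{i-1}$; irreducibility forces at least one S-crossing of $y = h_{i-1} + 1/2$, so $v_L$ strictly follows the start of $B_i$. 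The subwalk $W_1$ from the start of $B_i$ to $v_L$ lies between two visits to row $h_{i-1}$ in $B$ and is therefore partially directed; since $W_1$ uses both N and S, it must avoid either E or W, say E. Moreover $W_1$ must contain at least one W-step: otherwise it would use only N and S, and a self-avoiding walk on a single column cannot start and end on the same row while using both N and S, contradicting the S-crossing just mentioned.

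Write $W_2'$ for the subwalk of $B_i$ from the vertex $v_L'$ just above $v_L$ to the end of $B_i$. Then $W_2'$ is a weakly directed bridge of height $H-1$, so by induction each of its irreducible bridges is partially directed, and $W_2'$ contains at least one E-step (since all E-steps of $B_i$ lie after $v_L$). The contradiction is then obtained by pairing a visit of $W_1$ to some suitable intermediate row with a visit of $W_2'$ to the same row so that the subwalk between them picks up a W-step from $W_1$ and an E-step from $W_2'$, yielding a subwalk between same-row visits of $B$ that uses all four step types. The main obstacle I expect is precisely this pairing: one must choose the intermediate row and the two visits carefully, using irreducibility of $B_i$ to guarantee that the chosen row is visited on both sides of $v_L$ and that both horizontal steps fall inside the window between the visits. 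A symmetric analysis at the top row $y = h_i - 1$ may be needed to complete the case analysis.

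Finally, the equivalent formulation (partial directedness being equivalent to being an NES- or NWS-walk for an irreducible bridge) follows immediately because such a bridge must use N (it ascends) and, for $H \ge 2$, must use S (by irreducibility), so the avoided step must be E or W.
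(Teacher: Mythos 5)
Your $(\Leftarrow)$ direction matches the paper's: between two same-row visits, the walk stays inside one irreducible factor because the junction heights are strictly increasing, hence the portion is partially directed. Your handling of the equivalence ``partially directed $\iff$ \NN\EE\SS\ or \NN\WW\SS'' is also fine (the paper argues it slightly differently, by noting that no non-empty \EE\SS\WW-walk is a bridge and that an irreducible \NN\EE\WW-bridge is a horizontal run followed by \NN).

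The $(\Rightarrow)$ direction is where there is a genuine gap, and you flag it yourself. Your plan --- induct on the height $H$, split $B_i$ at the last visit $v_L$ to the bottom row, and then ``pair'' a visit of $W_1$ with a visit of $W_2'$ on some common row to pick up a \WW\ and an \EE --- is not carried out, and the missing step is precisely the heart of the argument, not a routine verification. In particular, the natural candidate pairings can fail: if the first \EE\ step of $W_2'$ occurs at a row strictly above the maximum height $M$ of $W_1$, and after that \EE\ step $W_2'$ never returns to row $M$, then there is no row visited by $W_1$ whose last $W_2'$-visit lies after the \EE\ step, so the subwalk between your two chosen visits never contains the \EE\ step. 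One must then descend further (into $W_2'$) and repeat, and it is not clear that the induction on height of $W_2'$ closes this; notably, your inductive hypothesis (that the irreducible bridges of $W_2'$ are partially directed) is stated but never actually used in the contradiction you sketch, which is a symptom of the plan not being fully formed. The paper avoids all of this by working directly with the first \EE\ step $s_{k+1}$ and the last \WW\ step $s_j$ preceding it: the steps between $s_j$ and $s_{k+1}$ form a pure vertical run (self-avoidance forces them to all be \NN\ or all \SS), and then a short case analysis --- using irreducibility only once, to guarantee a later return to a suitable height --- immediately exhibits a same-row pair of visits whose intermediate walk uses all four step types. That argument is non-inductive and considerably shorter; I would recommend abandoning the split at $v_L$ and adopting the ``first \EE, last \WW\ before it'' pivot instead.
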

\begin{proof}
 The second condition (being \NN\EE\SS\ or \NN\SS\WW) looks more
 restrictive than the first one (being partially directed), but it is
 easy to see 
 that they are actually  equivalent:
 no non-empty \EE\SS\WW-walk is a bridge, and the only irreducible
 bridges among \NN\EE\WW-walks consist of a  sequence of horizontal steps, followed by
 a \NN\ step: thus they are  \NN\EE\SS- or  \NN\SS\WW-walks.

So let us now consider a bridge whose irreducible bridges are partially
  directed. The portion of the walk 
lying  between two visits to a
  given horizontal line is entirely contained in one irreducible
  bridge, and consequently, is partially directed.

Conversely, consider a weakly directed bridge and one of its
irreducible bridges $w$. Of course, $w$ is also weakly directed. Let
$v_0, \ldots, v_n$ be the vertices of $w$, and let $s_i$ be the step
that goes from  $v_{i-1}$ to $v_i$. We
want to prove that $w$ is a \NN\EE\SS- or \NN\SS\WW-walk.
Assume that, on the contrary, $w$ contains a \WW\ step and an \EE\ step. By symmetry, we
may assume  that the first \WW\ occurs before the first \EE.
Let $s_{k+1}$ be the first \EE\ step, and let $s_j$ be the last
\WW\ step before $s_{k+1}$. Then $s_{j+1}, \ldots, s_k$ is a sequence
of \NN\ or \SS\ steps. Let $h$ be the height of $s_{k+1}$.
\begin{itemize}
\item Assume that $s_{j+1}, \ldots, s_k$ are  \NN\  steps
(first walk in Fig.~\ref{fig:preuve-equiv}).
Let $h'$ be the
maximal height reached before $v_j$, say at $v_i$, with $i< j$. Then
$h'<h$ (otherwise, between the first visit to height $h$ 
 and $v_{k+1}$, the walk would not be partially directed). Given that
$w$ is irreducible, it must visit height $h'$ again after $v_{k+1}$,
say at $v_\ell$. But then the walk joining $v_i$ to $v_\ell$ is not
partially directed, a contradiction. 
\item Assume that $s_{j+1}, \ldots, s_k$ are  \SS\  steps
(second walk in Fig.~\ref{fig:preuve-equiv}). Let $v_i$, with $i<k$, be
  the last visit at height $h$ 
  before $v_k$. Then the portion of the walk joining $v_i$ to
  $v_{k+1}$ is not partially directed, a contradiction. 
\end{itemize}
Consequently, the irreducible bridge $w$ is  a \NN \EE\SS- or
\NN\SS\WW-walk.

\begin{figure}[htb]
\scalebox{0.8}{
\input{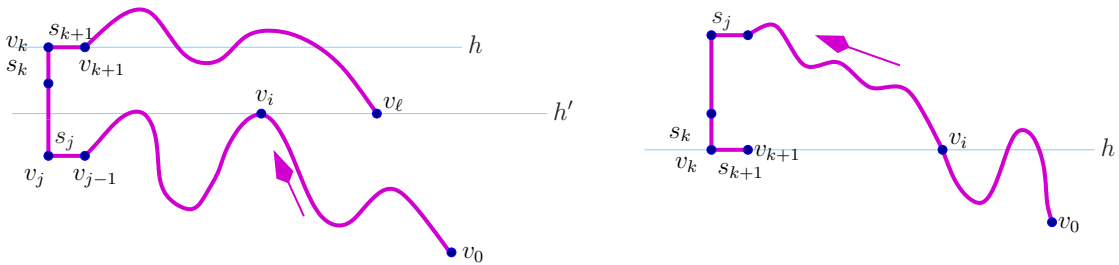}}
\caption{Illustrations for the proof of {Proposition}~\ref{prop:equiv}.}
\label{fig:preuve-equiv}
\end{figure}

\end{proof}

We discuss in Section~\ref{sec:diag}  a variant of weakly directed walks,
where we constrain the walk to be partially directed between two
visits to the same \emm diagonal, line (Fig.~\ref{fig:weak-d}). The notion of
bridges is adapted accordingly, by defining the \emm height, of a
vertex as the
sum of its coordinates. We will refer
to this model as the \emm diagonal model,, and to the original one as the
\emm horizontal model,.  There is, however, no simple counterpart of
Proposition~\ref{prop:equiv}: a (diagonal) bridge whose irreducible
bridges are partially directed is always weakly directed, but the
converse is not true, as can be seen in Fig.~\ref{fig:weak-d}. Thus bridges
with partially directed irreducible bridges form a proper subclass of
weakly directed bridges. We  will enumerate this subclass,
and study its asymptotic properties.

\begin{figure}[htb]
\includegraphics[scale=0.7]{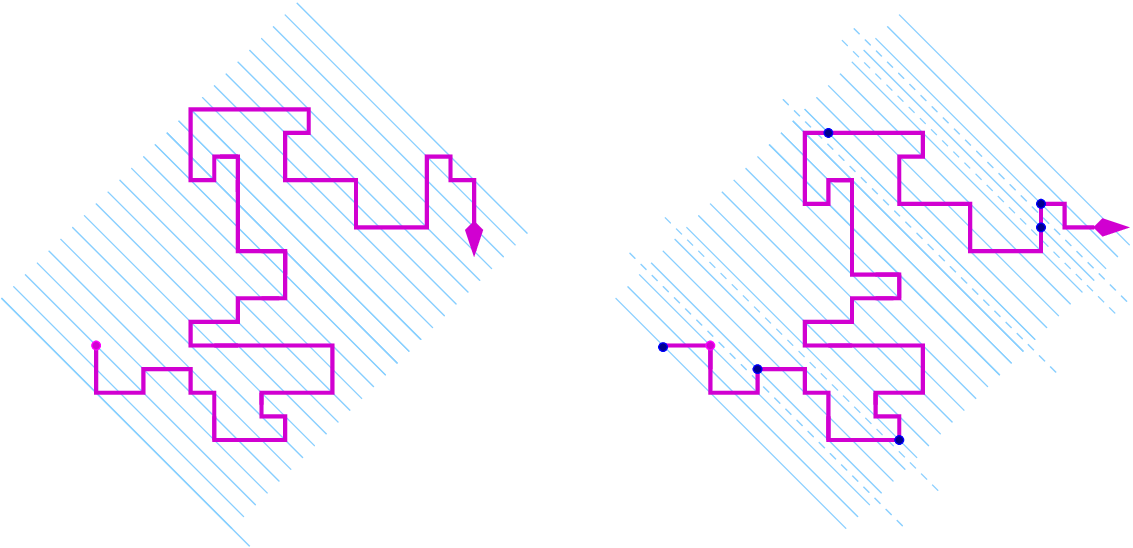}
\caption{Two weakly directed walks in the diagonal model. The second
  one is a bridge, factored into 6 irreducible
  bridges. Observe
  that the third irreducible bridge is \emm not, partially directed.}
\label{fig:weak-d}
\end{figure}

\section{Partially directed bridges: a step-by-step approach}
\label{sec:culminant-kernel}
Let us equip the square lattice $ \zs^2$ with its standard coordinate system.
With each model (horizontal or diagonal) is associated a notion of
\emm height,:  
the height of a vertex $v$, denoted by $h(v)$, is its ordinate in the horizontal model, and the sum of its
coordinates in the diagonal model.
Recall that a walk, starting at $v_0$ and ending at $v_n$, is  a  bridge if  all its
vertices $v\not = v_n$ satisfy $h(v_0) \le h(v) < h(v_n)$. If the
weaker inequality $h(v_0) \le h(v) \le
h(v_n)$ holds for all $v$, we say the walk is a \emm
pseudo-bridge,. Note that nonempty bridges 
are obtained by adding a step of height~$1$ to a
pseudo-bridge (a \NN\ step in the horizontal model, a \NN\ or \EE
\ step in the diagonal model). 
It is thus equivalent to count bridges or pseudo-bridges.

By Proposition~\ref{prop:equiv}, the enumeration of weakly directed
bridges in the horizontal model boils down to the enumeration of (irreducible)
partially directed bridges.  In this section
and the following one, we 
address the enumeration of these building blocks, first in a rather
systematic way based on a step-by-step construction, then in  a more
combinatorial way based on \emm heaps of cycles,. A third approach is
briefly discussed in the final section. We also count
partially directed bridges in the diagonal model, which will be useful
in Section~\ref{sec:diag}.

\begin{figure}[htb]
\includegraphics[height=4cm]{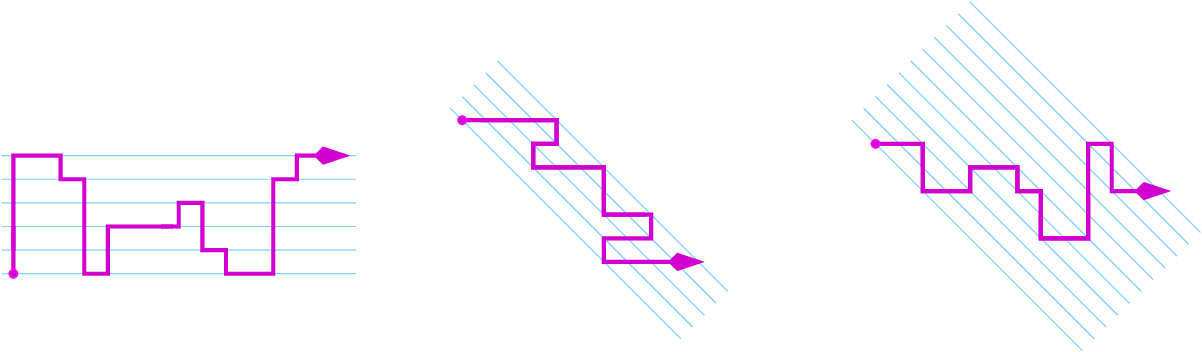}
\caption{A \NN\EE\SS-pseudo-bridge in the horizontal model. (b) 
An \EE\SS\WW-pseudo-bridge in the diagonal model. (c) 
A  \NN\EE\SS-pseudo-bridge in the diagonal model.}
\label{fig:culminants}
\end{figure}

As partially directed walks are defined by the avoidance of (at least)  one step, there
are four  kinds of these. Hence, in principle, we
should count, for each model (horizontal and diagonal), four families
of  partially directed bridges. However, in the horizontal model,
there exists no non-empty \EE\SS\WW-bridge, and every \NN\EE\WW-walk  is a pseudo-bridge. 
The latter class of walks is very easy to count,
and has a rational \gf \ (Lemma~\ref{lem:TPQ}). Moreover, a  
symmetry transforms  \NN\EE\SS-bridges into  
\NN\SS\WW-bridges, so that there is really one class of bridges that
we need to count. 
In the diagonal model, we need to count   \EE\SS\WW-bridges (which are
equivalent to  \NN\SS\WW-bridges by a diagonal symmetry)
and 
\NN\EE\SS-bridges (which are equivalent to \NN\EE\WW-bridges). 
Finally, to avoid certain ambiguities,
we need to count  \EE\SS-bridges, but this has
already been done in~\cite{mbm-ponty}.
 
From now on,  the starting point of our walks is always at height
$0$.  The  height of a walk is then defined to be the
maximal height reached by its vertices.

\subsection{\NN\EE\SS-bridges in the horizontal model}

\begin{Proposition}\label{prop:culm-h}
 Let $k\ge 0$. In  the horizontal model, the length \gf\ of
   \NN\EE\SS-pseudo-bridges of height $k$  is
$$
\PB^{(k)}(t) = \frac{t^k}{G_{k}(t)},
$$
where $G_k(t)$ is the sequence of polynomials defined by
$$
G_{-1} = 1, \quad G_0=1-t, 
\quad \hbox{and for } k\ge 0,
\quad G_{k+1}=(1-t+t^2+t^3)G_k -t^2G_{k-1} .
$$
Equivalently,
\beq\label{pb-inv}
\sum_{k\ge 0} \frac{v^k t^k}{\PB^{(k)}(t) }= \sum_{k\ge 0}v^k G_k=
\frac{1-t-t^2v}{1-(1-t+t^2+t^3)v+t^2v^2},
\eeq
or
$$
\PB^{(k)}(t) = \frac{U-\bU}{\bigl((1-t)U-t\bigr)U^k-\bigl((1-t)\bU-t\bigr)\bU^k},
$$
where 
$$
U= {\frac {1-t+{t}^{2}+{t}^{3}-
\sqrt { \left(1- t^4 \right)   \left(1-2t- {t}^{2}\right)  }}
{2t}}
$$
is a root of 
$t{u}^{2}- \left( 1-t+{t}^{2}+{t}^{3} \right) u+t=0$ 
and $\bU:=1/U$ is the other root of this polynomial.
\end{Proposition}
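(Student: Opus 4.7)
My plan is to encode each \NN\EE\SS-SAW by its column structure, set up a small linear system for the resulting generating functions, and diagonalise the transfer matrix to read off the closed form.

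Since a \NN\EE\SS-SAW never revisits any $x$-coordinate, it decomposes uniquely into $m \ge 1$ columns, each containing a (possibly empty) monotone vertical run of \NN's or \SS's, with adjacent columns joined by a single \EE\ step. A pseudo-bridge of height $k$ thus corresponds to a sequence of exit heights $0 = h_0, h_1, \ldots, h_m = k$ with every $h_i \in [0,k]$, carrying weight $t^{m-1} \prod_{i=1}^m t^{|h_i - h_{i-1}|}$. Introducing, for $0 \le a \le k$, the series $F_a^{(k)}(t)$ counting \NN\EE\SS-SAWs in the strip $[0,k]$ that start at height $0$ and end just after a column of exit height $a$, the column decomposition yields the linear system
\begin{equation*}
F_a^{(k)} \;=\; t^a \;+\; t \sum_{a' = 0}^{k} t^{|a - a'|}\, F_{a'}^{(k)}, \qquad 0 \le a \le k,
\end{equation*}
with $B^{(k)} = F_k^{(k)}$.

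Next, a direct calculation using the piecewise form of $t^{|a-a'|}$ shows that the combination $F_{h+1}^{(k)} - t F_h^{(k)}$ telescopes most of the right-hand side, reducing the system to the in-strip three-term recurrence
\begin{equation*}
t\, F_{h+1}^{(k)} \;=\; (1 - t + t^2 + t^3)\, F_h^{(k)} \;-\; t\, F_{h-1}^{(k)} \qquad (1 \le h \le k-1).
\end{equation*}
The characteristic polynomial is precisely $tu^2 - (1-t+t^2+t^3)u + t$, whose roots are $U$ and $\bU = 1/U$. Hence $F_h^{(k)} = \alpha_k U^h + \beta_k \bU^h$, where $\alpha_k, \beta_k$ are determined by the two remaining boundary equations (the $a=0$ and $a=k$ equations of the original system), which couple $F_0^{(k)}$ and $F_k^{(k)}$ to the geometric sums $\sum_{a'=0}^k t^{a'} F_{a'}^{(k)}$ and $\sum_{a'=0}^k t^{k-a'} F_{a'}^{(k)}$.

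Solving this $2 \times 2$ boundary system (the many cancellations implied by $U\bU = 1$ and by the defining relation $tU^2 + t = (1-t+t^2+t^3)U$ drive the simplification) and specialising at $h = k$ yields the claimed closed form for $B^{(k)}$. Writing $G_k = t^k/B^{(k)}$, the explicit formula exhibits $G_k$ as a linear combination of $(tU)^k$ and $(t\bU)^k$; since $tU + t\bU = 1 - t + t^2 + t^3$ and $tU \cdot t\bU = t^2$, the series $G_k$ automatically satisfies
\begin{equation*}
G_{k+1} \;=\; (1 - t + t^2 + t^3)\, G_k \;-\; t^2\, G_{k-1}.
\end{equation*}
The initial values $G_{-1} = 1$ and $G_0 = 1 - t$ are checked by direct substitution ($B^{(0)} = 1/(1-t)$ since a pseudo-bridge at height $0$ is an arbitrary sequence of \EE\ steps), and summing the recurrence against $v^k$ produces the rational identity stated for $\sum_k v^k G_k$. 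The main obstacle is the closed-form solution of the $2 \times 2$ boundary system: after substituting $F_{a'}^{(k)} = \alpha_k U^{a'} + \beta_k \bU^{a'}$, the boundary equations become heavy rational expressions in $U$, $\bU$ and $k$, and it is only after systematically invoking $U\bU = 1$ together with the defining polynomial of $U$ that the coefficients $\alpha_k, \beta_k$ collapse to a form yielding the compact answer of the proposition.
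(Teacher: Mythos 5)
Your proof is correct and follows a genuinely different path from the paper's. Where the paper's first proof introduces the bivariate series $T(t;u)$ encoding the endpoint height, derives a functional equation by peeling off the last \EE\ step, and cancels the kernel at $u=U$ and $u=\bU$, you stay with the finite vector of slice series $F_a^{(k)}$ and reduce the linear system directly to a three-term recurrence. The two approaches are dual --- the kernel, cleared of denominators, is precisely the quadratic $tu^2-(1-t+t^2+t^3)u+t$ that you recognise as the characteristic polynomial --- but yours is a transfer-matrix computation that never leaves univariate series. Your column decomposition is also the conceptual starting point of the paper's second proof (Section~\ref{sec:culminant-heaps}), which however identifies $G_k$ combinatorially through the heaps-of-cycles inversion (Proposition~\ref{prop:inversion}) rather than solving a recurrence. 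One remark: the boundary algebra that you flag as the main obstacle is in fact clean. Telescoping the $a=0$ and $a=k$ equations yields the two relations $(1-t+t^3)F_0^{(k)} - tF_1^{(k)} = 1-t^2$ and $(1-t+t^3)F_k^{(k)} - tF_{k-1}^{(k)} = 0$; then $tU+t\bU = 1-t+t^2+t^3$ and $U\bU=1$ give $(1-t+t^3)-tU = t(\bU-t)$ and $(1-t+t^3)U-t = tU(U-t)$, so after substituting $F_h^{(k)} = \alpha_k U^h + \beta_k\bU^h$ the boundary system collapses to
\[
\alpha_k(\bU-t)+\beta_k(U-t)=\tfrac{1-t^2}{t},\qquad
\alpha_k U^k(U-t)+\beta_k\bU^k(\bU-t)=0,
\]
from which $B^{(k)}=F_k^{(k)}=\dfrac{(1-t^2)(U-\bU)}{t\bigl[(U-t)^2U^k-(\bU-t)^2\bU^k\bigr]}$, and a final use of the quadratic in the form $t(U-t)^2=(1-t^2)\bigl((1-t)U-t\bigr)$ recovers the stated expression.
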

\begin{proof}
Fix $k\ge 0$.
Let $\cT$ be the set of \NN\EE\SS-walks that end with an \EE\ step,
  and in which each vertex $v$ satisfies $0\le h(v)\le k$. Let $\cT_i$
  be the subset of $\cT$ consisting of walks that end at height
  $i$. Let $T_i(t)\equiv T_i$ be the length \gf\ of  $\cT_i$, and
  define  the bivariate \gf
$$
T(t;u)\equiv T(u)= \sum_{i=0}^k T_i(t) u^i.
$$
This series counts walks of $\cT$ by their length and the height of
their endpoint. Note that we often omit the variable $t$ in our notation.
 The walks of $\cT_k$ are obtained by adding an
\EE\ step at the end of a pseudo-bridge of height $k$, and hence $\PB^{(k)}(t)=
T_k(t)/t$. Alternatively, pseudo-bridges  of height $k$ containing
at least one \EE\ step are obtained
by adding a sequence of  \NN\ steps of appropriate length to a walk of
$\cT$, and this gives
\beq\label{C-W-h}
\PB^{(k)}(t)=t^k  + \sum_{i=0}^k T_i(t) t^{k-i}=t^k\left(1+ T(1/t)\right).
\eeq
(The term $t^k$ accounts for the walk formed of $k$ consecutive \NN\ steps.)
\begin{Lemma}\label{lem:culm-h}
  The series $T(t;u)$, denoted $T(u)$ for short,
 satisfies the following equation:
$$
\left(1-\frac{ut^2}{1-tu}- \frac{t}{1-t\bu}\right) T(u)
=
 t\, \frac{1-(tu)^{k+1}}{1-tu}- t\, \frac{(tu)^{k+1}}{1-tu} T(1/t)
- \frac{t^2\bu}{1-t\bu} T(t),
$$
with $\bu=1/u$.
\end{Lemma}
\begin{proof}
We partition the set $\cT$ into three disjoint subsets, illustrated in
Fig.~\ref{fig:culm-h-dec}. 
\begin{itemize}
\item 
The first subset  consists of walks with a single \EE\ step. These walks
 read $\NN\cdots \NN \EE$, with at most $k$ occurrences of \NN,  and
 their \gf\ is  
$$
t\sum_{i=0}^k (tu)^i= t\, \frac{1-(tu)^{k+1}}{1-tu}.
$$
  \item 
The second subset consists of
walks in which the last \EE\ step is strictly higher than the
previous one. Denoting by $i$ the height of the next-to-last
\EE\ step, the \gf\ of this subset reads
$$
t\, \sum_{i=0}^{k}\left(  T_i (t) u^i \sum_{j=1}^{k-i} (tu)^j \right)
=
t\, \sum_{i=0}^{k}\left(  T_i (t) u^i\, \frac{tu-(tu)^{k-i+1}}{1-tu} \right)
=
\frac{ut^2}{1-tu}T(u) - t\, \frac{(tu)^{k+1}}{1-tu} T(1/t).
$$
   \item 
The third subset  consists of
walks in which the last \EE\  step is weakly lower than the
previous one. Denoting by $i$ the height of the next-to-last
\EE\ step,  the \gf\ of this subset reads
$$
t\, \sum_{i=0}^{k} \left(T_i (t) u^i \sum_{j=0}^{i} (t\bu)^j  \right)
=
t\, \sum_{i=0}^{k}\left( T_i (t) u^i \, \frac{1-(t\bu)^{i+1}}{1-t\bu} \right)
=
\frac{t}{1-t\bu}T(u)- \frac{t^2\bu}{1-t\bu} T(t).
$$
 \end{itemize}
Adding the three contributions gives the series $T(u)$ and establishes the lemma.
\end{proof}

\begin{figure}[htb]
\scalebox{0.8}{\input{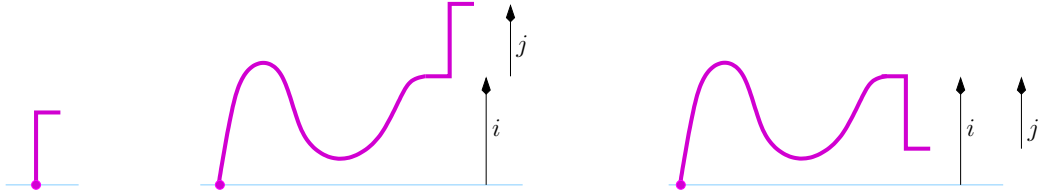}}
\caption{Recursive construction of bounded \NN\EE\SS-walk in the horizontal model.}
\label{fig:culm-h-dec}
\end{figure}

The equation of Lemma~\ref{lem:culm-h} is easily solved using the \emm
kernel method, (see e.g.~\cite{hexacephale,bousquet-petkovsek-1,prodinger}). The \emm kernel, of the equation is the coefficient of $T(u)$, namely
$$
1-\frac{ut^2}{1-tu}- \frac{t}{1-t\bu}.
$$
It vanishes when $u=U$ and $u=\bU:=1/U$, where $U$ is defined in the
lemma.  Since $T(u)$ is a polynomial in $u$, the series $T(U)$ and
$T(\bU)$ are well-defined. Replacing $u$ by $U$ or $\bU$ in the
functional 
equation cancels the left-hand side, and hence the right-hand side. One
thus obtains two linear equations between $T(t)$ and $T(1/t)$:
\begin{eqnarray*}
  0&=& t\, \frac{1-(tU)^{k+1}}{1-tU}- t\, \frac{(tU)^{k+1}}{1-tU} T(1/t)
- \frac{t^2\bU}{1-t\bU} T(t),
\\
0&=& t\, \frac{1-(t\bU )^{k+1}}{1-t\bU }- t\, \frac{(t\bU )^{k+1}}{1-t\bU } T(1/t)
- \frac{t^2U }{1-tU} T(t).
\end{eqnarray*}
 Solving this system gives in particular the value of
$T(1/t)$, and thus of $\PB^{(k)}(t)$ (thanks to~\eqref{C-W-h}). This provides the
second expression of  $\PB^{(k)}(t)$ given 
in Proposition~\ref{prop:culm-h}. 
The other results easily follow, using 
standard connections between
linear recurrence relations, their solutions,  and rational \gfs~\cite[Thm.~4.1.1]{stanley-vol1}.
\end{proof}

\subsection{\EE\SS\WW-bridges in the diagonal model}
%
\begin{Proposition}\label{prop:culm-d-1}
 Let $k\ge 0$. In  the diagonal model, the length \gf\ of
   \EE\SS\WW-pseudo-bridges of height $k$  is
$$
\PB_1^{(k)}(t) = \frac{t^k}{G_{k}(t)},
$$
where $G_k(t)$ is the   sequence of polynomials  defined by
$$
G_0=1, \quad G_1= 1-t^2\quad \hbox{and for } k\ge 1,
\quad G_{k+1}=(1+t^2)G_k -t^2(2-t^2)G_{k-1} .
$$
Equivalently,
$$
\sum_{k\ge 0} \frac{v^k t^k}{\PB_1^{(k)}(t) }= \sum_{k\ge 0}v^k G_k=
\frac{1-2t^2v}{1-(1+t^2)v+t^2(2-t^2)v^2},
$$
or
$$
\PB_1^{(k)}(t) = \frac{U-(2-t^2)\bU}
{(U-2t)U^k-\bigl((2-t^2)\bU-2t\bigr)\bigl((2-t^2)\bU\bigr)^k},
$$
where 
$$
U= {\frac {1+{t}^{2}
-\sqrt { \left(1- t^2 \right)   \left(1- 5\,{t}^{2} \right) }}
{2t }
}
$$
is a root of 
$t{u}^{2}- \left( 1+{t}^{2} \right) u+t \left( 2-{t}^{2} \right) 
=0$ 
and $\bU:=1/U$.
\end{Proposition}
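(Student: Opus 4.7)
The proof adapts the kernel-method strategy of Proposition~\ref{prop:culm-h} to the diagonal model. The key structural fact is that every \EE\SS\WW-walk decomposes canonically as a sequence of horizontal row-segments separated by \SS-steps: on each row (fixed $y$), self-avoidance forces the segment to be either all-\EE\ or all-\WW, and since distinct rows have distinct $y$-coordinates, self-avoidance between rows is automatic. Because \EE\ is the only height-increasing step in the diagonal model, any pseudo-bridge of height $k\ge 1$ must finish with an \EE-segment going from some entry height $h\in[0,k-1]$ up to exit height~$k$; the case $k=0$ is the empty walk.

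Let $\mathcal R$ be the set of \EE\SS\WW-walks starting at height~$0$, all of whose vertices have heights in $[0,k]$, and which are either empty or end with an \SS-step. Let $R^{(h)}(t)$ be the length generating function for $\mathcal R$-walks whose trailing \SS\ arrives at entry height $h\in[0,k-1]$ of the next row (the empty walk contributing to $R^{(0)}$), and set $R(u):=\sum_{h=0}^{k-1} R^{(h)}(t)\, u^h$. Appending to an $\mathcal R$-walk a final \EE-segment of length $k-h$ produces every pseudo-bridge of height $k\ge1$ exactly once, so
\[
\PB_1^{(k)}(t) \;=\; \sum_{h=0}^{k-1} R^{(h)}(t)\,t^{k-h} \;=\; t^k R(1/t).
\]
A functional equation for $R(u)$ is obtained by decomposing a non-empty $\mathcal R$-walk according to its last row-segment (entry $h'$, exit $e\in[1,k]$, weight $t^{|e-h'|}$) and the trailing \SS\ (weight~$t$, new entry $e-1$). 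Summing the geometric series in $e$ and collecting terms leads, after routine manipulations, to
\[
K(u)\,R(u) \;=\; 1 \;-\; \frac{t\,R(t)}{u-t} \;-\; \frac{u^k\, t^{k+2}\,R(1/t)}{1-ut},
\]
where the kernel satisfies $-K(u)(u-t)(1-ut) = t u^2 - (1+t^2)u + t(2-t^2)$, precisely the quadratic appearing in the statement.

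The kernel method now concludes the proof. By Vieta's formulas (the product of the roots equals $2-t^2$) the two roots of this quadratic are $U$, as given in the statement, and $V := (2-t^2)/U = (2-t^2)\bU$. Substituting $u=U$ and $u=V$ in the functional equation cancels the left-hand side, yielding a $2\times 2$ linear system in the two unknown series $R(t)$ and $R(1/t)$; solving for $R(1/t)$ and multiplying by $t^k$ gives the announced closed form for $\PB_1^{(k)}(t)$. The remaining two formulations are then equivalent by standard linear-recurrence arguments: since the roots of $\lambda^2-(1+t^2)\lambda+t^2(2-t^2)=0$ are $tU$ and $tV$, the sequence $G_k:=t^k/\PB_1^{(k)}(t)$ is a linear combination of $(tU)^k$ and $(tV)^k$ and thus satisfies the stated three-term recurrence; the rational generating function $\sum_k v^k G_k$ follows at once. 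I expect the main technical obstacle to be the algebraic simplification of the $2\times 2$ kernel solution into the compact form $t^k/G_k$.
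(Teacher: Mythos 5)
Your proof is correct and takes essentially the same route as the paper's: the paper also sets up a kernel-method argument by decomposing bounded \EE\SS\WW-walks according to their last \SS\ step (equivalently, the last row-segment), obtaining the same kernel (the paper's $1-\frac{t^2}{1-tu}-\frac{t\bu}{1-t\bu}$ coincides with your $K(u)$, both proportional to $tu^2-(1+t^2)u+t(2-t^2)$ after clearing denominators) and an equivalent functional equation for $T(u)=R(u)-1$, which it then solves by substituting the two roots. The only difference from your write-up is a small bookkeeping choice: the paper excludes the empty walk from its series $T$ and accounts for the all-\EE\ pseudo-bridge via a separate $t^k$ term.
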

\begin{proof}
The proof is very close to the proof of
Proposition~\ref{prop:culm-h},
but the role that was played by \EE\ steps is now played by
\SS\ steps. In particular, $\cT$ is now the set of \EE\SS\WW-walks
that end with a \SS\ step,  
  and in which each vertex $v$ satisfies $0\le h(v)\le k$. The sets  $\cT_i$
  and the series  $T_i(t)\equiv T_i$ and $T(t;u)\equiv T(u)$ are
  then defined in terms of $\cT$  as before.
Note that $T_k$ is in fact $0$.

 Pseudo-bridges of height $k$ containing
at least one \SS\ step are  obtained
by adding a sequence of  \EE\ steps of appropriate length to a walk of
$\cT$, and~\eqref{C-W-h} still holds (with $\PB^{(k)}$ replaced by $\PB_1^{(k)}$).
\begin{Lemma}\label{lem:culm-d-1}
  The series $T(t;u)$, denoted $T(u)$ for short,
 satisfies the following equation:
$$
\left(1-\frac{t^2}{1-tu}- \frac{t\bu}{1-t\bu}\right) T(u)
=
 t^2\, \frac{1-(tu)^{k}}{1-tu}-  t^2\, \frac{(tu)^{k}}{1-tu} T(1/t)
- \frac{t\bu}{1-t\bu} T(t),
$$
with $\bu=1/u$.
\end{Lemma}
\begin{proof}
We partition the set $\cT$ into three disjoint subsets. 
\begin{itemize}
\item 
The first subset  consists of walks with a single \SS\ step. Their \gf\ is  
$$
t\bu \sum_{i=1}^{k} (tu)^i= t^2\, \frac{1-(tu)^{k}}{1-tu}.
$$
 \item 
  The second subset consists of
walks in which the last \SS\ step is weakly higher than the
previous one. Their  \gf\  reads
$$
t\bu\, \sum_{i=0}^{k}\left(  T_i (t) u^i \sum_{j=1}^{k-i} (tu)^j \right)
=
\frac{t^2}{1-tu}T(u) - t^2\, \frac{(tu)^{k}}{1-tu} T(1/t).
$$
 \item 
  The third subset  consists of
walks in which the last \SS\  step is strictly lower than the
previous one. Their  \gf\  reads
$$
t\bu\, \sum_{i=0}^{k} \left(T_i (t) u^i \sum_{j=0}^{i-1} (t\bu)^j  \right)
=
\frac{t\bu}{1-t\bu}T(u)- \frac{t\bu}{1-t\bu} T(t).
$$
\end{itemize}
 Adding the three contributions establishes the lemma.
\end{proof}

Again, we solve the equation of Lemma~\ref{lem:culm-d-1} using the
kernel method, and conclude the proof of
Proposition~\ref{prop:culm-d-1} using~\eqref{C-W-h} (with $\PB^{(k)}$ replaced by $\PB_1^{(k)}$). 
\end{proof}

\subsection{\NN\EE\SS-bridges in the diagonal model}
The \gf\ of \NN\EE\SS-pseudo bridges is closely related to that of
\EE\SS\WW-pseudo-bridges. This will be explained combinatorially in
Section~\ref{sec:comments}, after a detour via partially directed \emm
excursions.,

\begin{Proposition}\label{prop:culm-d-2}
 Let $k\ge 0$. In  the diagonal model, the length \gf\ of
   \NN\EE\SS-pseudo-bridges of height $k$  is
$$
\PB_2^{(k)}(t) = \frac{t^k(2-t^2)^k}{G_{k}(t)},
$$
where $G_k(t)$ is the sequence of polynomials defined  in
Proposition~{\rm\ref{prop:culm-d-1}}.
In other words,
$$
\PB_2^{(k)}(t) = {(2-t^2)^k}\PB_1^{(k)}(t),
$$
so that other expressions of $\PB_2^{(k)}(t)$ can be derived from
Proposition~{\rm\ref{prop:culm-d-1}}.
%
%
\end{Proposition}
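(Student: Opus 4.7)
The plan is to adapt the step-by-step kernel-method strategy used in Propositions~\ref{prop:culm-h} and~\ref{prop:culm-d-1}. Let $\cT$ be the set of \NN\EE\SS-walks in the diagonal model whose vertices all lie at heights in $[0,k]$ and which end with an \EE\ step. Let $T_i(t)$ be the length generating function of walks in $\cT$ ending at height $i$, and set $T(u) = \sum_{i=1}^{k} T_i\,u^i$. A \NN\EE\SS-pseudo-bridge of height $k$ is either the column $\NN^k$ (contributing $t^k$) or is obtained by appending $k-i$ consecutive \NN\ steps to a walk of $\cT$ ending at height $i$. Self-avoidance of the appended column is automatic because the walk of $\cT$ just entered a fresh column via its last \EE\ step, so no previous vertex shares the same $x$-coordinate. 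This gives $\PB_2^{(k)}(t) = t^k\bigl(1 + T(1/t)\bigr)$, the analogue of~\eqref{C-W-h}.

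To derive a functional equation for $T(u)$, I partition $\cT$ according to the sub-walk lying strictly between the next-to-last and last \EE\ steps. Because each column of a \NN\EE\SS-walk is monotone, this sub-walk is either empty, a pure run of \NN's, or a pure run of \SS's. Summing these three contributions (in the spirit of Lemma~\ref{lem:culm-d-1}) yields
\[
\Bigl(1 - \tfrac{tu}{1-tu} + \tfrac{ut^2}{t-u}\Bigr)\,T(u) = \frac{tu\,(1-(tu)^k)}{1-tu} - \frac{u^{k+1} t^{k+1}}{1-tu}\,T(1/t) + \frac{ut^2}{t-u}\,T(t),
\]
with the boundary terms $T(1/t)$ and $T(t)$ arising from truncating the \NN-run (limited by the height-$k$ ceiling) and the \SS-run (limited by the height-$0$ floor), respectively.

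Clearing denominators, the kernel becomes $t(2-t^2)\,u^2 - (1+t^2)\,u + t$; under the substitution $u \mapsto (2-t^2)\,u$ this is exactly the kernel of Lemma~\ref{lem:culm-d-1}. Its two roots are therefore $\bU = 1/U$ and $U/(2-t^2)$, where $U$ is the root named in Proposition~\ref{prop:culm-d-1}. Substituting each root into the functional equation cancels the left-hand side and yields a $2\times 2$ linear system in $T(t)$ and $T(1/t)$. Solving and plugging $T(1/t)$ back into $\PB_2^{(k)}(t) = t^k(1 + T(1/t))$ gives the announced closed form.

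The main obstacle is the algebraic simplification at the end: one must identify the denominator with the polynomial $G_k(t)$ from Proposition~\ref{prop:culm-d-1}, and show that the numerator acquires the factor $(2-t^2)^k$. The former follows because the two kernels differ only by the scaling $u \mapsto (2-t^2)u$, so (after suitable rescaling) they share the same Vandermonde-type determinant, which governs the denominator. The latter comes from the fact that the solution of the linear system involves the $k$-th powers $(U/(2-t^2))^k$ and $\bU^k$: extracting the common factor $(2-t^2)^{-k}$ from these propagates into a factor $(2-t^2)^k$ in the numerator, yielding $\PB_2^{(k)}(t) = t^k(2-t^2)^k/G_k(t) = (2-t^2)^k\,\PB_1^{(k)}(t)$, as claimed.
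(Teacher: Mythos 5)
Your proof follows the same step-by-step kernel-method approach as the paper's first proof of Proposition~\ref{prop:culm-d-2}: the functional equation you derive is identical to the one in Lemma~\ref{lem:culm-d-2}, and you observe (as the paper does, though via the substitution $u\mapsto 1/u$ rather than $u\mapsto (2-t^2)u$) that its kernel vanishes at $1/U$ and $U/(2-t^2)$, then solve by the kernel method and conclude via the analogue of~\eqref{C-W-h}. The argument is correct and at the same level of detail as the paper's own (admittedly terse) treatment.
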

\begin{proof}
 The sets  $\cT$,  $\cT_i$, and the corresponding
series  $T_i(t)\equiv T_i$ and $T(t;u)\equiv T(u)$ are defined as in
the proof of  Proposition~\ref{prop:culm-h}
 --- only, the notion of height has changed. 
Note that $T_0$ is $0$.

 Pseudo-bridges of height $k$ containing
at least one \EE\ step are again obtained
by adding a sequence of  \NN\ steps of appropriate length to a walk of
$\cT$, and~\eqref{C-W-h} still holds (with $\PB^{(k)}$ replaced by $\PB_2^{(k)}$).
\begin{Lemma}\label{lem:culm-d-2}
  The series $T(t;u)$, denoted $T(u)$ for short,
 satisfies the following equation:
$$
\left(1-\frac{ut}{1-tu}- \frac{t^2}{1-t\bu}\right) T(u)
=
 tu\, \frac{1-(tu)^{k}}{1-tu}-  \frac{(tu)^{k+1}}{1-tu} T(1/t)
- \frac{t^2}{1-t\bu} T(t),
$$
with $\bu=1/u$.
\end{Lemma}
\begin{proof}
We partition the set $\cT$ into three disjoint subsets, defined as in
the proof of Lemma~\ref{lem:culm-h}. 
\begin{itemize}
\item 
The first subset  consists of walks with a single \EE\ step. Their \gf\ is  
$$
tu \sum_{i=0}^{k-1} (tu)^i= tu\, \frac{1-(tu)^{k}}{1-tu}.
$$
 \item  The second subset consists of
walks in which the last \EE\ step is strictly higher than the
previous one. Their  \gf\  reads
$$
tu\, \sum_{i=0}^{k}\left(  T_i (t) u^i \sum_{j=0}^{k-i-1} (tu)^j \right)
=
\frac{ut}{1-tu}T(u) -  \frac{(tu)^{k+1}}{1-tu} T(1/t).
$$
 \item The third subset  consists of
walks in which the last \EE\  step is weakly lower than the
previous one. Their  \gf\  reads
$$
tu\, \sum_{i=0}^{k} \left(T_i (t) u^i \sum_{j=1}^{i} (t\bu)^j  \right)
=
\frac{t^2}{1-t\bu}T(u)- \frac{t^2}{1-t\bu} T(t).
$$
\end{itemize}
 Adding the three contributions establishes the lemma.
\end{proof}

The kernel occurring in Lemma~\ref{lem:culm-d-2} is obtained by
replacing $u$ by $\bu$ in the kernel of
Lemma~\ref{lem:culm-d-1}. Consequently, it vanishes when
$u=1/U$ or $u=U/(2-t^2)$, where $U$ is the series defined in
Proposition~\ref{prop:culm-d-1}. 
We use the kernel method to solve
the equation of Lemma~\ref{lem:culm-d-2}, and conclude the proof of
Proposition~\ref{prop:culm-d-2} using~\eqref{C-W-h} (with $\PB^{(k)}$
replaced by $\PB_2^{(k)}$).   
\end{proof}

\subsection{\EE\SS-bridges in the diagonal model}
We state our last result on partially directed bridges  without proof, for two
reasons. Firstly, the 
step-by-step approach used in the previous subsections should have
become routine by now, and is especially simple to implement here. Secondly, 
this result already appears in~\cite[Prop.~3.1]{mbm-ponty} (where a bridge
preceded by an \EE\ step is called a \emm  culminating path,).

\begin{Proposition}\label{prop:culm-d-0}
 Let $k\ge 0$. In  the diagonal model, the length \gf\ of
   \EE\SS-pseudo-bridges of height $k$  is
$$
\PB_0^{(k)}(t) = \frac{t^k}{F_{k}(t)},
$$
where $F_k(t)$ is the sequence of polynomials defined by
$$
F_{-1}=1, \quad F_0=1, 
\quad \hbox{and for } k\ge 0,\quad 
F_{k+1}=F_k -t^2F_{k-1} .
$$
Equivalently,
$$
\sum_{k\ge 0} \frac{v^k t^k}{\PB_0^{(k)}(t) }= \sum_{k\ge 0}v^k F_k=
{\frac {1-v{t}^{2}}{1-v+{v}^{2}{t}^{2}}}
,
$$
or
$$
\PB_0^{(k)}(t) = \frac{U^2-\bU^2}{U^{k+2}-\bU^{k+2}},
$$
where 
$$
U=\frac{1-\sqrt{1-4t^2}}{2t}
$$
is a root of $ tu^2-u+t=0$ 
and $\bU:=1/U$ is the other root of this polynomial.
\end{Proposition}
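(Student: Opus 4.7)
The plan is to adapt the step-by-step, kernel-method template of the proofs of Propositions~\ref{prop:culm-h}, \ref{prop:culm-d-1} and \ref{prop:culm-d-2}, simplified by the fact that here both \EE\ and \SS\ change the height by $\pm 1$, and there is no ``lateral'' step available to form a directed tail. In particular, since height $k$ can only be reached via an \EE\ step from height $k-1$, every non-empty \EE\SS-pseudo-bridge of height $k$ must end with an \EE\ step, and the analogue of equation~\eqref{C-W-h} collapses to $\PB_0^{(k)}(t)=T_k(t)$ for $k\ge 1$, where $T_i(t)$ denotes the length \gf\ of \EE\SS-walks starting at height $0$, ending at height $i$, and staying at heights in $[0,k]$ (the case $k=0$ being trivial, as $\PB_0^{(0)}=1$).

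The first step is to derive a recurrence for the $T_i$ by peeling off the last step of a walk: such a walk is either empty (only if $i=0$), or its last step is \EE\ (coming from height $i-1$), or \SS\ (coming from height $i+1$). This gives
$$
T_i = [i=0] + t\,T_{i-1} + t\,T_{i+1}, \qquad 0\le i\le k,
$$
with boundary conditions $T_{-1}=T_{k+1}=0$. Packaging these into the bivariate series $T(u):=\sum_{i=0}^{k}T_i\,u^i$, the recurrence translates into the functional equation
$$
\left(1-tu-\frac{t}{u}\right)T(u) \;=\; 1-t\,u^{k+1}\,T_k-\frac{t}{u}\,T_0.
$$

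The second step is the kernel method. After clearing a factor of $u$, the kernel becomes $tu^2-u+t$, whose two roots are $U$ and $\bU=1/U$, with $U$ the series defined in the statement. Since $T(u)$ is a polynomial in $u$, the series $T(U)$ and $T(\bU)$ are well defined; substituting $u=U$ and $u=\bU$ cancels the left-hand side and produces two linear equations in the two unknowns $T_0$ and $T_k$. Subtracting them and using the identity $U+\bU=1/t$ (immediate from $tu^2-u+t=0$, and giving $U-\bU=t(U^2-\bU^2)$) isolates
$$
T_k \;=\; \frac{U^2-\bU^2}{U^{k+2}-\bU^{k+2}},
$$
which is the stated closed form for $\PB_0^{(k)}(t)$; the degenerate case $k=0$ is checked directly.

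The remaining claims are then routine. Setting $F_k := t^k/\PB_0^{(k)}(t)$ and using the quadratic identity $U^{k+2}=U^{k+1}/t-U^{k}$ (and its analogue for $\bU$), a short computation shows that $F_{k+1}=F_k-t^2F_{k-1}$ with $F_{-1}=F_0=1$, and the rational expression for $\sum_{k\ge 0}v^k F_k$ follows by a standard calculation on this linear recurrence. No step here presents a genuine obstacle; the only care needed is the handling of the base case $k=0$, where the formula and the empty-walk count must be matched by inspection.
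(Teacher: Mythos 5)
Your proof is correct, but it follows a genuinely different route from the only proof the paper actually writes out. The paper states Proposition~\ref{prop:culm-d-0} without proof in Section~\ref{sec:culminant-kernel}, remarking that the step-by-step method would be ``especially simple to implement here'' and that the result also appears in~\cite{mbm-ponty}; the proof that the paper does give (end of Section~\ref{sec:culminant-heaps}) uses the heaps-of-cycles / configuration-of-cycles machinery of Proposition~\ref{prop:inversion}: the graph is the segment $\{0,\dots,k\}$ with weight~$t$ on each edge, all elementary cycles have length~2 so the signed configuration polynomial is $F_k$, and the unique self-avoiding path from $0$ to $k$ contributes $t^k$ to the numerator. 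You instead carry out the kernel-method computation from scratch, which has the advantage of being self-contained and of directly producing the closed form in $U,\bU$ (the heaps argument gives $t^k/F_k$ immediately but the explicit formula in $U$ still requires solving the linear recurrence for $F_k$). Your observation that $\PB_0^{(k)}=T_k$ here — because there is no lateral step, so no tail of $\NN$'s to append — is exactly the simplification that makes the paper call this case ``especially simple.''

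One small imprecision in your write-up: after substituting $u=U$ and $u=\bU$ you get the $2\times2$ system
\[
 tT_kU^{k+1}+t\bU T_0 = 1,\qquad tT_k\bU^{k+1}+tU T_0 = 1,
\]
and merely \emph{subtracting} these only yields a relation between $T_k$ and $T_0$, not the value of $T_k$. The clean elimination is to form $U\cdot(\text{first})-\bU\cdot(\text{second})$, which kills $T_0$ outright and gives $tT_k(U^{k+2}-\bU^{k+2})=U-\bU$; then $U+\bU=1/t$ turns the numerator into $t(U^2-\bU^2)$. The rest of your argument (the recurrence for $F_k$ via $U^{k+2}=U^{k+1}/t-U^k$, the initial conditions, and the rational generating function) is routine and correct.
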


\section{Partially directed bridges via heaps of cycles}
\label{sec:culminant-heaps}

In this section, we give alternative (and  more combinatorial)
proofs of the results of
Section~\ref{sec:culminant-kernel}. In particular, these proofs
explain why the numerators of the rational series 
that count partially directed
bridges of height $k$ are so simple ($t^k$ or $t^k(2-t^2)^k$,
depending on the model).

 As a preliminary observation, let
us note that  \EE\SS-pseudo-bridges of height $k$ in the diagonal model can be
seen as arbitrary paths on the segment $\{0,1, \ldots, k\}$, with 
steps $\pm 1$, going from $0$ to $k$. Therefore, a natural way to count them
is to use a classical
 result that expresses the \gf\ of paths with
prescribed endpoints in a directed graph.
 This result is recalled
 in Proposition~\ref{prop:inversion} below. 
It gives a straightforward proof of
Proposition~\ref{prop:culm-d-0}.
However, the other three classes of bridges that we have
counted do not fall
immediately in the scope of this general result, because of the
self-avoidance condition (which holds automatically for
\EE\SS-walks). For instance, in the horizontal model, a 
\NN\EE\SS-pseudo-bridge of height $k$ is not an \emm arbitrary, path with steps $0,
\pm 1$ going from $0$ to $k$ on the  segment $\{0,1, \ldots, k\}$. 
 We show here how to recover the results of
Section~\ref{sec:culminant-kernel} by factoring bridges into
more general steps, and then applying  Proposition~\ref{prop:inversion}.

Let $\Gamma=(V,E)$ be a (finite) directed graph. To each 
arc  of this graph, we
associate a {weight}  taken in some commutative ring (typically,
a ring of formal power series). 
A \emph{cycle} of $\Gamma$ is a path ending at its starting point, taken up to
a cyclic permutation. A path  is \emph{self-avoiding} if it does not
visit the same vertex twice. A (non-empty) self-avoiding cycle is
called an \emph{elementary cycle}. Two paths are \emph{disjoint} if
their vertex sets are disjoint. The \emm weight,  $w(\pi)$ of  a path (or cycle)
$\pi$  is the product of the weights of its arcs.  
A \emph{configuration of cycles}
$\ga=\{\ga_1,\dotsc,\ga_r\}$ is a set of  pairwise disjoint elementary cycles.
The \emph{signed weight}  of $\ga$ is
$$\widetilde w(\ga):= (-1)^r \prod_{i=1}^r w(\ga_i).$$
For  two vertices  $i$ and $j$,  denote by $W_{i,j}$ the \gf\ of  paths going from from $i$ to $j$:
\[W_{i,j}=\sum_{\pi: i \leadsto j} w(\pi).\]
We
assume that this sum is well-defined, which is  always the case when
$W_{i,j}$ is a length \gf.

\begin{Proposition}%
\label{prop:inversion}
The \gf\ of paths going from $i$ to $j$ in the weighted digraph
$\Gamma$ is
\[W_{i,j}=\frac{N_{i,j}}{G}\text,\]
where $G=\sum_\ga\wt(\ga)$  
is the signed \gf \ of  configuration of cycles, and 
\[N_{i,j}=\sum_{\eta,\ga}w(\eta)\wt(\ga)\text,\]
where $\eta$ is a self-avoiding path going from $i$ to $j$ and $\ga$ is a
configuration of cycles disjoint from $\eta$.
\end{Proposition}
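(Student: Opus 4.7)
The plan is to establish the identity $W_{i,j}\cdot G=N_{i,j}$ directly by a sign-reversing, weight-preserving involution on pairs $(\pi,\gamma)$, where $\pi$ is an arbitrary (possibly non-self-avoiding) path from $i$ to $j$ and $\gamma$ is a configuration of elementary cycles. The signed weight of such a pair is $w(\pi)\,\widetilde w(\gamma)$, so the sum over all such pairs is exactly $W_{i,j}\cdot G$. Call a pair \emph{good} if $\pi$ is self-avoiding and $\gamma$ is vertex-disjoint from $\pi$: the contribution of good pairs is precisely $N_{i,j}$. It thus suffices to exhibit an involution on \emph{bad} pairs that flips the sign while preserving the unsigned weight.

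To define the involution, write $\pi=v_0,v_1,\dots,v_n$ with $v_0=i$ and $v_n=j$, and scan $\pi$ from the start. Let $k$ be the least index such that either (i) $v_k$ belongs to some cycle $\gamma_0\in\gamma$, or (ii) $v_k=v_\ell$ for some $\ell<k$. Such an index exists precisely when $(\pi,\gamma)$ is bad. In case (i), remove $\gamma_0$ from $\gamma$ and splice it into $\pi$ at position $k$, obtaining a longer path $\pi'$ that traverses $\gamma_0$ starting and ending at $v_k$, and a smaller configuration $\gamma'=\gamma\setminus\{\gamma_0\}$. In case (ii), the sub-path $v_\ell,v_{\ell+1},\dots,v_k$ is an elementary cycle $\gamma_0$ (by minimality, $v_\ell,\dots,v_{k-1}$ are distinct), and it is disjoint from $\gamma$; extract it from $\pi$ and adjoin it to $\gamma$, producing $\pi'$ and $\gamma'=\gamma\cup\{\gamma_0\}$. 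In both situations one elementary cycle is transferred between $\pi$ and $\gamma$, so $w(\pi')\,\widetilde w(\gamma')=-w(\pi)\,\widetilde w(\gamma)$, since $\widetilde w$ records configurations with the sign $(-1)^{|\gamma|}$.

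The main obstacle is checking that this map is genuinely an involution, that is, that applying it to $(\pi',\gamma')$ reproduces the same distinguished index $k$ and cycle $\gamma_0$, thereby exchanging the two cases. By minimality of $k$, the prefix $v_0,\dots,v_{k-1}$ is self-avoiding and disjoint from $\gamma$, hence also from $\gamma'$. One then verifies that the scan on $\pi'$ first triggers an issue at position $k$: in case (i), the inserted cycle contributes internal vertices that belong to neither $\gamma'$ nor the original prefix (because $\gamma_0\in\gamma$ and $\gamma$ was a disjoint union of elementary cycles), so the first repetition in $\pi'$ is at the end of the inserted cycle, recovering $\gamma_0$; in case (ii), after excision, $v_\ell=v_k$ is the first vertex of $\pi'$ meeting a cycle of $\gamma'$, namely the freshly added $\gamma_0$. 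Thus the two cases are swapped by the map, the fixed points are exactly the good pairs, and summation yields $W_{i,j}\cdot G=N_{i,j}$. An alternative, less combinatorial route would be to observe that $W_{i,j}=[(I-M)^{-1}]_{i,j}$ for the weighted adjacency matrix $M$, and then expand $\det(I-M)$ and its $(j,i)$-cofactor via the cycle decomposition of permutations; this recovers $G$ and $N_{i,j}$ respectively, and the sign $(-1)^r$ comes from combining the signs of the cycles with the $-M$ entries.
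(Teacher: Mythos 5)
Your argument is correct, and it takes a genuinely different route from the one the paper actually sketches. The paper's explicit proof outline is the linear-algebra one you mention only in passing at the end: identify $W_{i,j}$ with an entry of $(I-A)^{-1}$, use Cramer's rule, and expand the determinant and cofactor over permutations to recognize $G$ and $N_{i,j}$. The paper then merely remarks that a proof ``without any reference to linear algebra, using the theory of partially commutative monoids, or, more geometrically, heaps of pieces'' also exists, but does not carry it out. Your sign-reversing, weight-preserving involution is precisely that combinatorial proof, written out self-containedly. What the involution buys is independence from matrix algebra and from the slightly delicate cofactor expansion (the identification of the cofactor with a sum over self-avoiding paths plus disjoint cycles is not quite immediate and needs its own argument); what the linear-algebra route buys is brevity, provided one is willing to cite the determinantal expansions. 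One detail worth making explicit in your write-up: at the minimal index $k$, cases (i) and (ii) are mutually exclusive (if $v_k$ both lay on a cycle of $\gamma$ and repeated an earlier vertex $v_\ell$, then $v_\ell$ would already lie on that cycle, contradicting minimality), so the map is well defined without a priority rule; you implicitly use this, and stating it avoids any ambiguity. Also, since the graph $\Gamma_k$ in Lemma~\ref{lem:culminating} has self-loops (descending edges of height $0$), it is worth noting that your involution handles length-one elementary cycles with no change: splicing a loop at $v_k$ creates an immediate repetition $v_k,v_k$, which the reverse scan correctly detects under case (ii).
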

This classical result can be proved as follows: one first identifies 
$W_{i,j}$ as the $(i,j)$ coefficient of the matrix $(1-A)^{-1}$, where $A$ is the
weighted adjacency matrix of $\Gamma$. Thanks to standard linear
algebra, this coefficient can be expressed in
terms of the determinant  of $(1-A)$ and one of its
cofactors~\cite[Thms.~4.7.1 and 4.7.2]{stanley-vol1}. 
A simple expansion
of these as sums over permutations shows that the determinant is $G$,
and the cofactor $N_{i,j}$. 
Proposition~\ref{prop:inversion}  can also be proved without
any reference to linear algebra, using the theory of \emm partially
commutative monoids,, or, more geometrically, \emm heaps of pieces,~\cite{foata,viennot}.
 In this context, configurations of cycles are called
\emm trivial heaps of cycles,. This is the only justification of the
title of this section, where no non-trivial heap will actually be seen.

\medskip
As a straightforward application, let us sketch a second proof of
Proposition~\ref{prop:culm-d-0}. The vertices of $\Gamma$ are $0,1, \ldots,
k$, with an arc from $i$ to $j$ if $|i-j|=1$. 
We apply the above
proposition to count paths going from $0$ to $k$. 
All arc weights are $t$.
The elementary cycles
have length 2, and by induction on $k$, it is easy to see that $G$, the
signed \gf \ of  configurations of cycles, is the polynomial $F_k$. The
only self-avoiding 
path $\eta$ going from $0$ to $k$ consists of $k$ `up' steps and visits
all vertices, so that $N_{0,k}=t^k$. Proposition~\ref{prop:culm-d-0} follows.

\subsection{Bridges with large down steps}

The  proof of
Proposition~\ref{prop:culm-d-0} that we have just sketched can be
 extended to paths with 
arbitrary large down steps. This will be used below to count partially
directed bridges.

Let $\Gamma_k$ be the graph
with vertices $\{0,\dotsc,k\}$ and with the following weighted arcs:
\begin{itemize}
\item \emph{ascending} arcs  $i\to i+1$ of height $1$, with weight~$A$, for $i=0, \ldots,k-1$;
\item \emph{descending} arcs  $i\to i-h$ of height $h$, with weight~$D_h$, for
  $i=h,\ldots,k$ and  $h\geq0$.
\end{itemize}
For $k\geq0$, denote by $C^{(k)}$ the \gf\ of paths from $0$ to $k$ in
the graph $\Gamma_k$. These paths may be seen as pseudo-bridges
  of height $k$ with general down steps.

\begin{Lemma}\label{lem:culminating}
The \gf\ of pseudo-bridges of height $k$ is
\[C^{(k)}=\frac{A^k}{H_k},\]
where the \gf\ of the denominators $H_k$ is
\beq
\sum_{k\geq0}H_kv^k=\frac{1-D(vA)}{1-v+vD(vA)},
\label{H-ser}
\eeq
with $D(v)$ the \gf\ of descending steps:
\[D(v)=\sum_{h\geq0}D_hv^h.\]
\end{Lemma}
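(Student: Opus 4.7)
The plan is to apply Proposition~\ref{prop:inversion} to the weighted digraph $\Gamma_k$ and separately analyze the numerator and denominator that arise.

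First I would identify the elementary cycles of $\Gamma_k$. Any cycle must alternate between going up (along ascending edges of height $1$) and going down (along a single descending edge). Since the cycle is elementary, it visits each vertex at most once, so it must consist of $h$ consecutive ascending edges followed by one descending edge of height $h$, for some $h\geq 0$. Such a cycle, based at vertex $i\in\{0,\dotsc,k-h\}$, visits the vertices $\{i,i+1,\dotsc,i+h\}$, has weight $A^h D_h$, and occupies $h+1$ consecutive vertices; when $h=0$ this is just a loop of weight $D_0$ at a single vertex. Crucially, the vertex sets of elementary cycles are \emph{intervals}, and two cycles are disjoint precisely when their supporting intervals are disjoint.

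Next I would handle the numerator $N_{0,k}$. A self-avoiding path from $0$ to $k$ in $\Gamma_k$ cannot use any descending edge: after descending from some $j$ to some $i<j$, every way back up to $k$ would require re-entering a vertex in $\{i+1,\dotsc,j\}$, contradicting self-avoidance. Hence the only self-avoiding path is $0\to 1\to\cdots\to k$, with weight $A^k$; it visits every vertex of $\Gamma_k$, so no cycle can be disjoint from it. Therefore $N_{0,k}=A^k$, and by Proposition~\ref{prop:inversion} it suffices to prove that $G=H_k$, where $G=\sum_\gamma \wt(\gamma)$ is the signed generating function of configurations of elementary cycles in $\Gamma_k$.

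For the denominator, I would encode a configuration of vertex-disjoint elementary cycles on $\{0,1,\dotsc,k\}$ as an ordered sequence of \emph{atoms} scanned left to right: each atom is either a single uncovered vertex (contributing weight $v$), or a cycle occupying an interval of $h+1$ consecutive vertices (contributing signed weight $-A^h D_h\cdot v^{h+1}$, the factor $-1$ from $\wt$). Summing over all sequences of atoms gives
\[
\sum_{n\geq 0} v^n\!\!\sum_{\substack{\text{configs on}\\ n\text{ vertices}}}\!\! \wt(\gamma)
=\frac{1}{1-v-\sum_{h\geq 0}(-A^hD_h)v^{h+1}}
=\frac{1}{1-v+vD(vA)}.
\]
Subtracting the empty-configuration term (corresponding to $n=0$) and shifting by $v$ to pass from $n=k+1$ vertices to the index $k$, I obtain
\[
\sum_{k\geq 0} G\,v^k=\frac{1}{v}\Bigl(\frac{1}{1-v+vD(vA)}-1\Bigr)=\frac{1-D(vA)}{1-v+vD(vA)},
\]
which matches \eqref{H-ser}. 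Combining with $N_{0,k}=A^k$ gives $C^{(k)}=A^k/H_k$ as claimed. The only subtle point is the numerator argument, which must rule out any self-avoiding path using a down step; everything else is a routine ``sequence of blocks'' generating function computation applied to Proposition~\ref{prop:inversion}.
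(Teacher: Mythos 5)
Your proof is correct and takes essentially the same approach as the paper: apply Proposition~\ref{prop:inversion}, argue that the unique self-avoiding path from $0$ to $k$ is the all-ascending one of weight $A^k$ (which meets every vertex, so no disjoint cycle configuration exists), and compute the signed generating function of configurations of interval cycles. The only cosmetic difference is that the paper derives \eqref{H-ser} from the recurrence $H_k=H_{k-1}-\sum_{h=0}^{k}D_hA^hH_{k-h-1}$ obtained by peeling off the last vertex, while you sum the geometric series of atoms directly — two packagings of the same decomposition.
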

\begin{proof}
With the notation of  Proposition~\ref{prop:inversion}, the series
$C^{(k)}$ reads $N_{0,k}/G$.
Since  all ascending arcs have height~1, the only
self-avoiding path from $0$ to $k$ consists of $k$ ascending arcs, and has
weight $A^k$. As it visits every vertex of the graph, the only configuration
of cycles disjoint from it is the empty configuration. Therefore, the 
numerator $N_{0,k}$ is simply $A^k$.
The   elementary cycles consist of a
descending step of height, say, $h$, followed by $h$ ascending steps. The weight of this cycle  is $D_hA^h$.

To underline the dependence of our graph in $k$, denote by $H_k$ the
denominator $G$ of Proposition~\ref{prop:inversion}.
Consider a configuration of cycles of $\Gamma_k$: either the
vertex $k$ is free, or it is occupied by a cycle; this gives the
following recurrence relation, valid for $k\ge 0$:
$$
H_k=H_{k-1}-\sum_{h=0}^kD_hA^hH_{k-h-1},
$$
with the initial condition $H_{-1}=1$. This is equivalent to~\eqref{H-ser}.
\end{proof}

\subsection{Partially directed self-avoiding walks as arbitrary paths}

As discussed above, it is not straightforward to apply Proposition~\ref{prop:inversion} (or
Lemma~\ref{lem:culminating}) to the enumeration  of  partially
directed bridges, because of the self-avoidance condition. To circumvent
this difficulty, we will 
first prove that  partially directed self-avoiding walks are arbitrary paths on a
line with large down steps.

It will be convenient to  regard lattice walks as words on the alphabet
$\{\NN,\EE,\SS,\WW\}$, and  sets of walks as 
languages. We thus use some standard notation from the  theory of
formal languages~\cite{hopcroft}. 
 The length of a word $u$ (the number of letters) is
denoted by $|u|$, and the number of occurrences of the letter $a$ in
$u$ is
$|u|_a$. For  two languages $\cL$ and $\cL'$,  
\begin{itemize}
\item $\cL+\cL'$ denotes the union of $\cL$ and $\cL'$;
\item $\cL\cL'$ denotes the language formed of all concatenations of a word of
$\cL$ with a word of $\cL'$;
\item $\cL^*$ denotes the language formed of all sequences of words of $\cL$;
\item $\cL^+$ denotes the language formed of all \emph{nonempty} sequences of
words of $\cL$.
\end{itemize}
Finally, for any letter $a$, we denote by $a$ the \emm elementary, language $\{a\}$.
A \emm  regular expression, is any expression obtained from 
elementary languages using 
the sum, product, star and plus operators. 
It is \emph{unambiguous} if every word of the corresponding language
has a unique factorization compatible with the expression.
To take a simple example, the expressions $(\NN+\EE)^*$ and $(\NN+\WW)^*$ are
unambiguous expressions describing \NN\EE- and \NN\WW-walks respectively.
However, the expression $(\NN+\EE)^* + (\NN+\WW)^*$ is ambiguous, as every
\NN-walk is matched twice.
Unambiguous regular expressions
translate directly into enumerative results.

\medskip
Let us say that a \NN\EE\SS-walk is 
\emph{proper} if it neither begins nor ends with a \SS\ step.  All
 \NN\EE\SS-pseudo-bridges are proper, whether in the horizontal or
diagonal model. The following lemma explains how to see  proper
\NN\EE\SS-walks  as sequences of generalized steps.

\begin{Lemma}\label{lem:proper}
Every proper \NN\EE\SS-walk has a unique factorization into \NN\ steps and
nonempty proper \EE\SS-walks with no consecutive \EE\ steps. In other
words, the language 
of proper \NN\EE\SS-walks admits 
the following unambiguous regular expression:
\[
\bigl(\NN+\EE(\SS^+\EE)^*\bigr)^*.\]
\end{Lemma}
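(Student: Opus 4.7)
The plan is to establish the lemma by verifying both directions of the equivalence between proper \NN\EE\SS-walks and words matched by the regular expression, together with uniqueness of the parse. The main structural tool I would use is a \emph{column decomposition}: in any \NN\EE\SS-walk the $x$-coordinate is non-decreasing (only \EE\ moves it), so the vertex sequence partitions into maximal runs sharing a common $x$-value, which I call \emph{columns}. Self-avoidance forces the walk inside any column to be monotone in $y$, so each column carries a pure \NN-run, a pure \SS-run, or a single vertex. The proper hypothesis then rules out \SS\ steps in the first or last column, so every \SS\ in a proper walk sits in an intermediate column sandwiched between an entering \EE\ step and an exiting \EE\ step with no other step in between.

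For the direction ``regex implies walk'', I would check that any word matched by $(\NN + \EE(\SS^+\EE)^*)^*$ neither begins nor ends with \SS\ (each atom starts and ends with \NN\ or \EE) and is self-avoiding: inside an \EE-block, the vertical steps between consecutive \EE's are only \SS's, while between blocks, or before the first and after the last block, the only vertical steps are \NN's. Hence no column mixes \NN\ and \SS\ steps, and since the $x$-coordinate is non-decreasing overall, the walk has no self-intersection. For the converse, given a proper walk $w$, I would scan it left to right, assigning each \NN\ to a standalone \NN-atom when it lies outside every $\SS^+\EE$ pattern, and opening a new \EE-block at every \EE\ that is not preceded by a \SS; this block then greedily absorbs every immediately following $\SS^+\EE$ unit. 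The column analysis above guarantees that each \SS\ of $w$ belongs to such a $\SS^+\EE$ unit, so the procedure exhausts $w$ and the resulting word matches the expression.

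Uniqueness of the parse is then essentially automatic: at any position the atom is forced, being a single \NN\ when the next letter is \NN, and otherwise the longest prefix of the remaining word matching $\EE(\SS^+\EE)^*$, whose extent is determined because after each \EE\ of an open block the decision to extend or close is made by a single look-ahead (an \SS\ forces continuation, anything else forces termination). The real crux of the argument, and the step I would spend most effort justifying, is the converse inclusion: it is here that both the proper hypothesis and self-avoidance are genuinely needed to exclude local patterns like $\NN\SS$ (immediate self-intersection) or a trailing \SS\ (violating properness), which are precisely the configurations that would leave a \SS\ stranded outside any $\SS^+\EE$ unit attached to an open \EE-block.
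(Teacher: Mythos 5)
Your proof is correct and takes essentially the same approach as the paper, which simply states the factorization (each \NN\ step is a factor, as is each maximal \EE\SS-subwalk with no two consecutive \EE's) and points to a figure. Your column decomposition is precisely the formal scaffolding behind that factorization, and your left-to-right greedy parse correctly justifies existence and uniqueness where the paper is terse.
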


\begin{proof}  The factorization of proper \NN\EE\SS-walks  is
  exemplified in Fig.~\ref{fig:proper}. Every \NN\ step is a factor,
  as well as every maximal   \EE\SS-walk with no consecutive \EE\ steps.
\end{proof}

\begin{figure}[ht]
\centering
\includegraphics[scale=0.6]{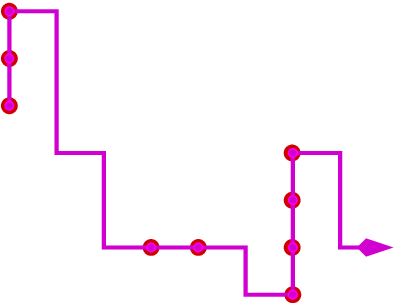}
\caption{
The factorization of a proper \NN\EE\SS-walk.}
\label{fig:proper}
\end{figure}

A similar result holds for  \EE\SS\WW-walks (which we need to study in
the diagonal model), which are obtained by applying a quarter turn to
\NN\EE\SS-walks.  Let us say that  an
\EE\SS\WW-walk is  \emph{proper} if it neither begins nor ends with a
\WW\ step.
After a rotation,  Lemma~\ref{lem:proper} gives for the language of
proper \EE\SS\WW-walks  the following unambiguous description:
\beq\label{fact-ESW}
\bigl(\EE+\SS(\WW^+\SS)^*\bigr)^*.
\eeq

\subsection{Partially directed bridges}

We can now give new proofs of the results of
Section~\ref{sec:culminant-kernel},
based on Lemma~\ref{lem:culminating}.

\medskip
\noindent 
{\em Second proof of Proposition}~\ref{prop:culm-h}.
Thanks to  Lemma~\ref{lem:proper},  self-avoiding
\NN\EE\SS-pseudo-bridges of height $k$ can be
seen as arbitrary pseudo-bridges  of height $k$ (in the sense of
Lemma~\ref{lem:culminating}) where \NN\ is the only 
ascending step (of height 1 and weight $t$), and all words of  $\EE(\SS^+\EE)^*$
are descending 
steps. Moreover, the weight of  a descending step $u$ is $t^{|u|}$ and its
height is $|u|_\SS$. Thus, with the notation of
Lemma~\ref{lem:culminating}, $A=t$ and the \gf\  $D(v)$ of descending
steps is derived  from the
regular expression  $\EE(\SS^+\EE)^*$:
\begin{align*}
D(v)&=\frac{t}{1-\frac{t^2v}{1-tv}}.
\end{align*}
Proposition~\ref{prop:culm-h}, in the form~\eqref{pb-inv}, 
now follows from Lemma~\ref{lem:culminating}.
\qed

\medskip
\noindent 
{\em Second proof of Proposition}~\ref{prop:culm-d-1}.
Thanks to~\eqref{fact-ESW}, self-avoiding \EE\SS\WW-pseudo-bridges of height $k$  can be
seen as arbitrary pseudo-bridges  of height $k$  (in the sense of
Lemma~\ref{lem:culminating}) where \EE\ is the only 
ascending step (of  weight $t$), and all words of  $\SS(\WW^+\SS)^*$
are descending  
steps. Moreover, the weight of a descending step $u$ is $t^{|u|}$
and its height is  $|u|$. Thus, with the notation of 
Lemma~\ref{lem:culminating}, $A=t$ and the \gf\  $D(v)$ of descending
steps is 
$$
D(v)=\frac{tv}{1-\frac{t^2v^2}{1-tv}}.$$
Proposition~\ref{prop:culm-d-1} now follows from 
Lemma~\ref{lem:culminating}.
\qed

\medskip
\noindent 
{\em Second proof of Proposition}~\ref{prop:culm-d-2}.
Again,  the description of  \NN\EE\SS-walks  given by
Lemma~\ref{lem:proper} allows us to regard these self-avoiding walks
as arbitrary paths with generalized steps.  
In  the diagonal framework, the ascending steps $u$ are  $\NN$ and
all words of 
$\EE(\SS\EE)^*$. They all have weight $t^{|u|}$.  All
other words of $\EE(\SS^+\EE)^*$ are descending.
 Moreover, the weight of a descending step $u$ is  $t^{|u|}$ and its
 height is  $|u|_\SS-|u|_\EE$.
 Thus, with the notation of Lemma~\ref{lem:culminating}, 
$$
A=t+\frac{t}{1-t^2}=\frac{t(2-t^2)}{1-t^2}\quad \hbox{and} \quad
D(v)=\frac{tv^{-1}}{1-\frac{t^2}{1-tv}}-\frac{tv^{-1}}{1-t^2}.
$$

However, one must pay attention to the following detail: in a
  \NN\EE\SS-pseudo-bridge of height $k$, only the last 
generalized step can end at height $k$, because all descending steps
begin with \EE. Similarly, all descending steps end with \EE, which
implies that the only  generalized step that starts at height
$0$ is
the first one (and moreover it is an ascending step). Thus a
  \NN\EE\SS-pseudo-bridge of height $k\ge 2$ is 
really a pseudo-bridge (in the sense of Lemma~\ref{lem:culminating})   \emm of
height $k-2$,, preceded and followed by an ascending
step. Thus for $k\ge 2$,
$$
\PB_2^{(k)}= \frac{A^k}{H_{k-2}},
$$
where the \gf\ of the denominators $H_k$ is given in Lemma~\ref{lem:culminating}. 
Given that $\PB_0^{(2)}=1$ and  $\PB_1^{(2)}=A$, we have
$$
\sum_{k\ge 0} \frac{t^k(2-t^2)^k v^k}{B^{(k)}_2(t)}=
1+ \frac{t(2-t^2) v}{A} + \sum_{k\ge 2} \frac{t^k(2-t^2)^k
  v^k}{A^k}H_{k-2}
=\frac{1-2t^2v}{1-(1+t^2)v+t^2(2-t^2)v^2}.
$$
This is equivalent to   Proposition~\ref{prop:culm-d-2}.
\qed

\section{Weakly directed  walks: the horizontal model}
\label{sec:gf}

We now return to the weakly directed walks defined in
Section~\ref{sec:def}.  We determine their  \gf, study their
asymptotic number and average end-to-end distance, and finally prove
that the \gf\ we have obtained has infinitely many singularities, and
hence, cannot be D-finite.

\subsection{Generating functions}
\label{sec:gf1}

By combining Propositions~\ref{prop:equiv} and~\ref{prop:culm-h}, it
is now easy to count  weakly directed \emm  bridges,.

\begin{Proposition}\label{prop:weakly-h}
In the horizontal model, the \gf\ of  weakly directed bridges is:
$$
W(t)= 
\frac 1 {1+t-\frac{2t\PB}{1+t\PB}}
$$
where 
$\PB
:=\sum_{k\ge0}\PB^{(k)}(t) $ is the \gf\ of
\NN\EE\SS-pseudo-bridges, given by~Proposition~{\rm\ref{prop:culm-h}}.
\end{Proposition}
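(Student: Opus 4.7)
My plan is to build weakly directed bridges as sequences of irreducible bridges using Proposition~\ref{prop:equiv}, then to apply inclusion-exclusion on the two admissible families of irreducible bridges. By Proposition~\ref{prop:equiv}, a bridge is weakly directed if and only if each of its irreducible bridges is an \NN\EE\SS- or \NN\WW\SS-walk; every bridge admits a unique factorization into irreducible bridges and conversely any concatenation of bridges is a bridge, so weakly directed bridges are exactly the (possibly empty) concatenations of weakly directed irreducible bridges.

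I would first count \NN\EE\SS-irreducible-bridges by going through all \NN\EE\SS-bridges. In the horizontal model the only \NN\EE\SS step that increases the height is \NN, so a nonempty \NN\EE\SS-bridge is obtained from a unique \NN\EE\SS-pseudo-bridge by appending a final \NN; hence the \gf\ of nonempty \NN\EE\SS-bridges is $t\PB$, with $\PB$ as in Proposition~\ref{prop:culm-h}. Because the irreducible components of a \NN\EE\SS-bridge only use the steps \NN, \EE, \SS, every \NN\EE\SS-bridge factors uniquely as a sequence of \NN\EE\SS-irreducible-bridges. Denoting the \gf\ of the latter by $I_E$, this yields $1 + t\PB = 1/(1-I_E)$, hence $I_E = t\PB/(1+t\PB)$. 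By the horizontal reflection that swaps \EE\ and \WW, the \gf\ of \NN\WW\SS-irreducible-bridges is $I_W = t\PB/(1+t\PB)$ as well.

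Next, I would identify the irreducible bridges that belong to both families. These are irreducible bridges using only the steps \NN\ and \SS. Any self-avoiding $\{\NN,\SS\}$-walk is monotone, and to be a bridge starting at height~$0$ it must consist entirely of \NN\ steps; since every horizontal line $k+1/2$ is then crossed exactly once, the irreducibility condition forces a single \NN\ step, contributing $t$. By inclusion-exclusion, the \gf\ of weakly directed irreducible bridges is
$$
I_E + I_W - t \;=\; \frac{2t\PB}{1+t\PB} - t.
$$

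Finally, assembling weakly directed bridges as arbitrary sequences of these irreducible building blocks gives
$$
W(t)\;=\;\frac{1}{1-\bigl(I_E+I_W-t\bigr)}\;=\;\frac{1}{1+t-\dfrac{2t\PB}{1+t\PB}},
$$
which is the announced formula. I do not expect a serious obstacle: the one place requiring care is the overlap between \NN\EE\SS- and \NN\WW\SS-irreducible-bridges, which must be subtracted exactly once in the inclusion-exclusion; verifying that this overlap reduces to a single \NN\ step is the main combinatorial check.
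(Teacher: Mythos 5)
Your proof is correct and follows essentially the same route as the paper: apply Proposition~\ref{prop:equiv} to reduce to sequences of partially directed irreducible bridges, express the \gf\ of \NN\EE\SS-irreducible-bridges in terms of $\PB$ via the sequence decomposition of \NN\EE\SS-bridges, and use inclusion-exclusion noting that the overlap $\cI_\EE\cap\cI_\WW$ is the single walk \NN. The paper writes the relation as $t\PB = I_\EE/(1-I_\EE)$ rather than $1+t\PB = 1/(1-I_\EE)$, but these are equivalent; the decomposition, the key lemma, and the final assembly are identical.
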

\begin{proof}
  Let $\cI_\EE$ be the set of irreducible \NN\EE\SS-bridges, and let
  $I_\EE(t)$ be the associated length \gf. We will most of the time omit the variable
  $t$ in our series, writing for instance $I_\EE$ instead of
  $I_\EE(t)$.
 Given that a non-empty
\NN\EE\SS-bridge is obtained by adding a \NN\ step at
 the end of a \NN\EE\SS-pseudo-bridge, and is
  a (non-empty) sequence of irreducible \NN\EE\SS-bridges, we have:
$$
t \PB= \frac {I_\EE}{1-I_\EE}.
$$
Define  similarly the set $\cI_\WW$, and the associated series 
$I_\WW$. By symmetry, $I_\WW=I_\EE$.
 Moreover,
$$
\cI_\EE\cap \cI_\WW=\NN.
$$
Hence the \gf\ of  irreducible bridges that are either \NN\EE\SS\ or
\NN\SS\WW\ is 
$$
I:=I_\EE+ I_\WW - t= \frac{2t \PB}{1+t\PB}-t.
$$
By Proposition~\ref{prop:equiv}, the \gf\ of  weakly directed bridges
is $ W=\frac 1{1-I}$. The result follows.
\end{proof}

We will now  determine the \gf\ of (general) weakly directed \emm walks.,
As we did for bridges, we factor them into ``irreducible'' factors, but
the first and last 
factors are not necessarily bridges, so that we need to extend the notion
of irreducibility to more general walks. Let us say that
a walk $v_0\dotsm v_n$ is
\emph{positive} if all its vertices $v$ satisfy $h(v)\geq h(v_0)$, and
that it is
\emph{copositive} if all vertices $v\neq v_n$ satisfy $h(v)<h(v_n)$.
Thus a bridge is a positive and copositive walk.

\begin{Definition}\label{def:irreducible}
Let $r$ denote the reflection through the $x$-axis.
A non-empty walk $w$ is \emph{\NN-reducible} if it is of the form $qp$, where $q$
is a nonempty copositive walk and $p$ is a nonempty positive walk. It is
\emph{\SS-reducible} if $r(w)$ is \NN-reducible. Finally, it is
\emph{irreducible} if it is neither \NN-reducible nor \SS-reducible.
\end{Definition}

We can rephrase this definition as follows. If a horizontal line at
height $h+1/2$, with $h \in \zs$, meets $w$ at exactly one point, we
say that the step of $w$ containing this point is a \emm separating,
step. Of course, this step is either \NN\ or \SS. Then a non-empty walk is
irreducible if it does not contain any non-final separating step.
It is then clear that the above definition extends the notion of irreducible bridges
defined in Section~\ref{sec:def}: a non-empty bridge is never \SS-reducible, and
it is \NN-reducible if and only if it is the product of two non-empty bridges.
Also, observe that the endpoint of a \NN-reducible walk is strictly
higher than its origin: Thus a walk may not be both \NN-reducible and
\SS-reducible.

By cutting a walk after each separating step, one obtains  a 
decomposition into a sequence of irreducible walks. 
This may be either a \NN-decomposition or a \SS-decomposition. 
The first factor  of a \NN-decomposition is copositive, while the
last one is positive. The intermediate factors are bridges.

We can now generalize Proposition~\ref{prop:equiv},
and characterize weakly directed walks in terms their irreducible factors.

\begin{Proposition}\label{prop-equiv-general}
A walk is weakly directed if and only if each of its irreducible factors
is partially directed.
Equivalently, each of these factors is a \NN\EE\SS- or a \NN\SS\WW-walk.
\end{Proposition}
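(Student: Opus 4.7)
The strategy is to extend Proposition~\ref{prop:equiv} from bridges to arbitrary walks by means of the irreducible factorization $w=f_1\cdots f_m$ described after Definition~\ref{def:irreducible}, proving the two directions separately.

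For the direction ``each $f_i$ partially directed $\Rightarrow$ $w$ weakly directed,'' the crucial point is that a separating step of $w$ is a strict height barrier: if $s_r$ is the unique step of $w$ crossing the half-integer line $L_{h+1/2}$, then all vertices of $w$ with index $<r$ lie on one side of $L_{h+1/2}$ and all those with index $\ge r$ lie on the other. Consequently, two visits $v_s,v_t$ (with $s<t$) to the same horizontal line cannot have a separating step $s_r$ with $s<r\le t$, since their common height would then have to lie on both sides of $L_{h+1/2}$. Hence $v_s$ and $v_t$ lie in a common factor $f_i$, and the sub-walk between them, being contained in $f_i$, is partially directed by hypothesis.

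For the converse, every irreducible factor $f$ of a weakly directed walk is itself weakly directed: two visits to a line within $f$ are two visits within $w$ joined by the same sub-walk. So it suffices to show that every irreducible, weakly directed walk is partially directed, which I would do by adapting the proof of Proposition~\ref{prop:equiv}. Suppose $f$ uses all four step types; after a vertical reflection, we may assume the first W step precedes the first E step. Let $s_{k+1}$ be the first E step, $s_j$ the last W step before it, and $h$ the height of $s_{k+1}$, so that $s_{j+1},\ldots,s_k$ involves only N and S steps. The case analysis of Proposition~\ref{prop:equiv} transfers almost verbatim: in either sub-case, the original argument exhibits a sub-walk between two vertices at a common height that uses all four step types, contradicting weak directedness of $f$.

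The main obstacle is supplying the needed auxiliary vertex in the N-sub-case ($s_{j+1},\ldots,s_k$ all N) without the intermediate-value argument available for bridges. One needs a vertex $v_\ell$ at height $h'=\max\{h(v_t):t<j\}$ with $\ell>k+1$; the original proof obtains it via the bridge property, but here we argue instead by irreducibility. If no such $v_\ell$ existed, the walk would stay strictly above $h'$ after $v_{k+1}$; combined with the fact that it stays at heights $\le h'$ before $v_j$, the line $L_{h'+1/2}$ would then be crossed only once in $f$, namely by the unique step of the run $s_{j+1},\ldots,s_k$ that traverses it. This crossing is non-final (since $s_{k+1}$ follows) and hence a non-final separating step of $f$, contradicting irreducibility. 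An analogous irreducibility substitution handles the corresponding subtlety in the S-sub-case; with these replacements in place, the rest of the Proposition~\ref{prop:equiv} argument goes through unchanged.
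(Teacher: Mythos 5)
Your forward direction and the reduction of the converse to irreducible factors match the paper exactly. For the converse's case analysis, however, you and the paper diverge in a way worth flagging. The paper applies a \emph{second} reflection (through the $x$-axis, which exchanges \NN\ and \SS\ while preserving \EE/\WW, irreducibility and weak-directedness) to place the walk in the \NN-run sub-case, so that only that single sub-case needs to be examined; you instead keep both sub-cases and handle them directly. Your treatment of the \NN-sub-case is fine, but note that the original proof of Proposition~\ref{prop:equiv} already invokes irreducibility at exactly this point (``Given that $w$ is irreducible, it must visit height~$h'$ again\dots''); you are not replacing a bridge-property argument, you are restating the same irreducibility argument in the language of separating steps from Definition~\ref{def:irreducible}, which is the right thing to do.

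The weak spot is the one sentence ``an analogous irreducibility substitution handles the corresponding subtlety in the \SS-sub-case.'' The subtlety there is not a missing $v_\ell$ after $v_{k+1}$; it is the existence of the vertex $v_i$ at height $h$ \emph{before} $v_k$, which for a bridge comes free from $h(v_0)=0\le h$ but for a general irreducible factor may fail outright. When it fails, one must argue with the \emph{minimum} height $h'$ of $v_0,\dots,v_{j-1}$ (which is then $>h$), locate the crossing of the line $h'-1/2$ inside the descending run, invoke irreducibility to get a return to height $h'$ after $v_{k+1}$, and only then exhibit the forbidden four-step sub-walk. This works, but it is not a mechanical ``substitution'' of the \NN-sub-case argument, and as written your proof leaves it unproved. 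Either spell out this variant, or — more economically, as the paper does — add the second symmetry and discard the \SS-sub-case entirely. (Also, the reflection that makes the first \WW\ precede the first \EE\ is the one across a vertical axis, $(x,y)\mapsto(-x,y)$; what the paper calls $r$, the reflection through the $x$-axis, is the one used for the second reduction.)
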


\begin{proof}
The proof is very similar to that of Proposition~\ref{prop:equiv}.
First, the equivalence between the two conditions comes from the fact
that any partially directed irreducible walk is  \NN\EE\SS\ or
\NN\SS\WW.

Now, if all
irreducible factors of a walk are partially directed, then this walk is weakly
directed: two points of the walk lying on the same horizontal line
belong to the same irreducible factor.

Conversely, let $w$ be an irreducible factor of a weakly directed walk; then
$w$ is weakly directed. We prove that it is either a \NN\EE\SS- or a
\NN\SS\WW-walk.
Assume that this is not the case, i.e.\ $w$ contains both a \WW\  and an \EE\ 
step. By symmetry, we may assume that it contains a \WW\  step
before its first
\EE\  step. By symmetry again,  we may assume that, between  the first
\EE\  step and the last \WW\ step that precedes it, the walk consists
of \NN\ steps. Then
the first argument used in the proof of Proposition~\ref{prop:equiv},
depicted in the first part of
Fig.~\ref{fig:preuve-equiv},  leads to a contradiction.
\end{proof}

We now proceed to the enumeration of general weakly directed walks.
\begin{Lemma}\label{lem:TPQ}
  The generating functions $T(t)$, $P(t)$ and $Q(t)$ of general,
  positive and copositive \NN\EE\SS-walks are:
\begin{align*}
T(t)&=\frac {1+t}{1-2t-t^2},\\
P(t)&=\frac1{2t^2}\Biggl(\sqrt{\frac{1-t^4}{1-2t-t^2}}-1-t\Biggr),\\
Q(t)&=1+tP(t).
\end{align*}
\end{Lemma}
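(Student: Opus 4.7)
My plan has four steps.

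First, I compute $T(t)$ by a direct column decomposition. Every \NN\EE\SS-walk factors uniquely as a sequence of (possibly empty) vertical runs separated by \EE-steps, where a vertical run is either empty, a maximal string of \NN-steps, or a maximal string of \SS-steps. The length \gf\ of a vertical run is $V=1+2t/(1-t)=(1+t)/(1-t)$, and summing over the number of \EE-steps gives $T(t)=V/(1-tV)=(1+t)/(1-2t-t^2)$.

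Next, I would prove $Q=1+tP$ by a bijection. A nonempty copositive walk must end with an \NN-step, since an \EE-step would give $h(v_{n-1})=h(v_n)$ and an \SS-step would give $h(v_{n-1})>h(v_n)$, both violating copositivity. Removing this \NN-step yields a walk $u$ ending at its maximum height. The map that reverses the order of the step sequence (producing a walk whose $k$-th vertex is $u_n-u_{n-k}$) is a length-preserving involution on \NN\EE\SS-walks that sends walks ending at their maximum bijectively to positive walks. This yields $Q-1=tP$.

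For $P$, I would decompose every \NN\EE\SS-walk at the last vertex $v_i$ of minimum height $m$. The prefix $v_0\dotsm v_i$ ends at its minimum; applying $r$ (reflection through the $x$-axis) and then the step-reversal bijection above yields a positive walk of the same length. The suffix $v_i\dotsm v_n$, whose vertices except $v_i$ are all strictly above $m$, is either empty or starts with \NN\ followed by a positive walk. The crucial point is self-avoidance of the concatenation: since the prefix lives in $x\le x_i$ and the suffix in $x\ge x_i$, collisions only occur in the column $x=x_i$. The prefix's last column there is either $\{v_i\}$ (when the prefix is empty or its last step is \EE) or an \SS-run ending at $v_i$; the suffix's first column is either $\{v_i\}$ (suffix empty) or an \NN-run starting at $v_i$. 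Two nontrivial such runs would collide above $v_i$, so either the prefix's last column or the suffix must be trivial. Summing the contributions over the three compatible cases---suffix empty (any prefix), empty prefix with nonempty suffix, prefix nonempty ending in \EE\ with nonempty suffix---produces
\[T=P+tP+t^2P^2=(1+t)P+t^2P^2.\]

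Finally, I solve the quadratic $t^2P^2+(1+t)P-T=0$ for $P$, selecting the branch with $P(0)=1$:
\[P=\frac{-(1+t)+\sqrt{(1+t)^2+4t^2T}}{2t^2}.\]
Substituting $T=(1+t)/(1-2t-t^2)$ and using the identity $(1+t)(1-2t-t^2)+4t^2=1-t+t^2-t^3=(1-t)(1+t^2)$, the discriminant becomes $(1-t^2)(1+t^2)/(1-2t-t^2)=(1-t^4)/(1-2t-t^2)$, yielding the claimed formula. The main obstacle throughout is the self-avoidance analysis in step three, which forces the precise column-by-column argument to rule out the "double-column" collision between prefix and suffix.
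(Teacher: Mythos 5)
Your proof is correct. The derivation of $T$ via the column decomposition is equivalent to the paper's grammar $\NN^* + \SS^+ + \cT\EE(\NN^*+\SS^+)$, and your bijection proving $Q=1+tP$ (remove the forced final \NN, then reverse the word) is exactly the paper's one-line argument, just spelled out. Where you genuinely diverge is $P$: the paper introduces the bivariate series $P(t;u)$ tracking the endpoint height, obtains a functional equation by cutting before the last \EE\ step, and solves it by the kernel method, reusing verbatim the machinery already set up for Lemma~\ref{lem:culm-h}. You instead cut an arbitrary \NN\EE\SS-walk at the last vertex $v_i$ of minimum height, send the prefix to a positive walk via $r$ followed by word-reversal, note that the suffix is empty or an \NN\ step followed by a positive walk, and rule out the self-avoidance conflict to obtain the quadratic $T=(1+t)P+t^2P^2$. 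The self-avoidance analysis is sound: $x$-coordinates along a \NN\EE\SS-walk are nondecreasing, so prefix and suffix can only collide in the column $x=x_i$; a nonempty suffix always enters that column at height $m+1$, which clashes with the prefix's final column exactly when the prefix's last step is \SS; the three remaining cases (suffix empty; prefix a single vertex; prefix ending in \EE) partition the walks and contribute $P$, $tP$, and $t^2P^2$. Your route buys a self-contained bijective explanation of why $P$ satisfies a quadratic equation, at the price of this delicate column argument; the paper's route is more mechanical but fits uniformly with the other kernel-method computations in the paper.
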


\begin{proof}
Let us start with general \NN\EE\SS-walks. The language $\cT$ of these
walks is given by  the following unambiguous description:
$$
\cT= \NN^*+\SS^++ \cT\EE(\NN^*+\SS^+),
$$
from which the expression of $T(t)$ readily follows.

Let us now count positive walks. Let $P(t;u)$
be their generating function, with the variable $u$ accounting for the height of
the endpoint. 
We decompose positive walks by cutting them before the last
\EE\  step; this is similar to what we did in the proof   of
Lemma~\ref{lem:culm-h}. We thus obtain:
\[P(t;u)=\frac1{1-tu}+\frac{t^2uP(t;u)}{1-tu}
+\frac{t\bigl(P(t;u)-t\bu P(t;t)\bigr)}{1-t\bar u}.\]
We rewrite this as follows:
\[\biggl(1-\frac{t^2u}{1-tu}-\frac{t}{1-t\bar u}\biggr)P(t;u)
=\frac1{1-tu}-\frac{t^2\bar uP(t;t)}{1-t\bar u}.\]
We apply  again the kernel method: we specialize $u$ to the 
series $U$ of Proposition~\ref{prop:culm-h}; this cancels the
coefficient of $P(t;u)$, and we thus obtain  the value of $P(t;t)$. We
then specialize the above equation to $u=1$ to determine $P(t;1)$, which is 
the series denoted $P(t)$ in the lemma.

Finally, a non-empty copositive walk is obtained by reading a positive walk, 
seen as a word on $\{\NN,\EE, \SS, \WW\}$, 
from right to left, and adding a final \NN\ step. This gives the last
equation of the lemma. 
\end{proof}

\begin{Proposition}\label{prop:general-weakly-h}
The generating function  of weakly directed walks is
\[
\overline W(t)=
1+\bigl(2T_i(t)-2t\bigr)+2\bigl(2Q_i(t)-t\bigr)W(t)\bigl(2P_i(t)-t\bigr),
\]
where the series $T_i(t)$, $P_i(t)$  and $Q_i(t)$  count respectively
 general,
positive, and copositive irreducible \NN\EE\SS-walks, and are given by:
\begin{align*}
T_i(t)&=T(t)-1-2Q_i(t)\bigl(1+tB(t)\bigr)P_i(t),
&P_i(t)&=\frac{P(t)-1}{1+tB(t)},&Q_i(t)&=\frac{Q(t)-1}{1+tB(t)}.&
\end{align*}
The series $W$, $\PB$, $T$, $P$ and $Q$ are those of
Proposition~{\rm\ref{prop:weakly-h}} and Lemma~\rm{\ref{lem:TPQ}. } 
\end{Proposition}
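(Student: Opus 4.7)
The plan is to generalize the bridge decomposition of Proposition~\ref{prop:weakly-h} to arbitrary walks, using Proposition~\ref{prop-equiv-general} together with a careful analysis of the irreducible factorization.

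The key structural step is to observe that a nonempty walk cannot have both an $\NN$-separating step and an $\SS$-separating step. Indeed, if $v_i\to v_{i+1}$ is an $\NN$-separator, then $h(v_k)<h(v_{i+1})$ for $k\leq i$ while $h(v_k)\geq h(v_{i+1})$ for $k\geq i+1$; any later $\SS$-separator at position $j>i$ would in particular force $h(v_0)\geq h(v_{j+1})+1\geq h(v_{i+1})+1$, contradicting $h(v_0)<h(v_{i+1})$. Consequently, every nonempty walk is either irreducible, or admits a unique $\NN$-decomposition (irreducible copositive prefix, irreducible bridges, irreducible positive suffix), or admits a unique $\SS$-decomposition (the reflection of an $\NN$-decomposition through the $x$-axis). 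By Proposition~\ref{prop-equiv-general}, the walk is weakly directed iff each factor is an $\NN\EE\SS$- or an $\NN\WW\SS$-walk.

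Next I would derive the formulas for $P_i$, $Q_i$ and $T_i$. A positive $\NN\EE\SS$-walk has only $\NN$-separating steps (since $v_0$ is a height minimum), so it is either empty, irreducible, or its $\NN$-decomposition consists of $k\geq 1$ irreducible $\NN\EE\SS$-bridges followed by one irreducible positive $\NN\EE\SS$-factor. Writing $I_\EE$ for the \gf\ of irreducible $\NN\EE\SS$-bridges, this yields $P=1+P_i/(1-I_\EE)$. Since the proof of Proposition~\ref{prop:weakly-h} gives $t\PB=I_\EE/(1-I_\EE)$, equivalently $1-I_\EE=1/(1+t\PB)$, we obtain $P_i=(P-1)/(1+t\PB)$. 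The copositive case is analogous (for a copositive walk, the last factor of the $\NN$-decomposition is forced to be a bridge), and yields $Q_i=(Q-1)/(1+t\PB)$. For $T_i$, one splits general $\NN\EE\SS$-walks according to whether they are empty, irreducible, or admit an $\NN$- or $\SS$-decomposition. The $\NN$-decomposed case contributes $Q_iP_i/(1-I_\EE)=Q_i(1+t\PB)P_i$, and reflection $r$ through the $x$-axis (an involution on $\NN\EE\SS$-walks that swaps the two decomposition types) gives an equinumerous contribution from the $\SS$-decomposed case. Hence $T=1+T_i+2Q_i(1+t\PB)P_i$, equivalent to the stated formula.

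To assemble $\overline W$, I would partition weakly directed walks by their irreducible factorization. The empty walk contributes $1$. A single-factor walk is an irreducible $\NN\EE\SS$- or $\NN\WW\SS$-walk; the intersection consists of irreducible $\NN\SS$-walks, which by self-avoidance reduce to the two one-letter walks $\NN$ and $\SS$, giving $2T_i-2t$. Walks with at least two factors admit either an $\NN$- or an $\SS$-decomposition, and $r$ is a bijection between these two classes, producing the overall factor of $2$. An $\NN$-decomposed walk is an irreducible copositive factor, followed by a (possibly empty) sequence of irreducible bridges, followed by an irreducible positive factor, each piece being $\NN\EE\SS$ or $\NN\WW\SS$. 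The intersection of irreducible copositive $\NN\EE\SS$- and $\NN\WW\SS$-walks reduces to $\{\NN\}$ (copositive walks must end with an $\NN$ step, and an irreducible $\NN\SS$-walk is a single letter), giving $2Q_i-t$; the symmetric analysis gives $2P_i-t$ for the positive endpiece; and for the intermediate bridges the \gf\ is exactly the series $I=2I_\EE-t$ of the proof of Proposition~\ref{prop:weakly-h}, so such sequences are counted by $1/(1-I)=W$. Combining these ingredients yields $\overline W=1+(2T_i-2t)+2(2Q_i-t)W(2P_i-t)$.

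The main subtlety is the exclusivity of $\NN$- and $\SS$-separators established at the outset: this is what makes the clean product structure of the final formula possible, decoupling the global ``direction'' choice (the factor of $2$) from the inner bridge sequence. The remaining care is in the bookkeeping of the two-element intersections $\{\NN\}$ or $\{\NN,\SS\}$ of $\NN\EE\SS$- and $\NN\WW\SS$-irreducible walks at each position, which produces the $-t$ and $-2t$ corrections.
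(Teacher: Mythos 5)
Your proof is correct and follows essentially the same route as the paper's: the same decompositions of general, positive and copositive \NN\EE\SS-walks into irreducible factors to derive $T_i$, $P_i$, $Q_i$, and the same assembly of $\overline W$ (empty walk, single irreducible factor, or copositive factor $\cdot$ bridge sequence $\cdot$ positive factor, with the global factor $2$ coming from the \NN/\SS dichotomy and the $-t$, $-2t$ corrections from the $\{\NN\}$, $\{\NN,\SS\}$ overlaps). Your preliminary verification that a walk cannot carry both an \NN-separating and an \SS-separating step only spells out the case where the \SS-separator comes later (the other ordering being handled symmetrically); the paper obtains the same exclusivity more directly from the observation that an \NN-separator forces the endpoint strictly above the origin while an \SS-separator forces it strictly below.
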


\begin{proof}
In order to determine the series $T_i(t)$, $P_i(t)$  and $Q_i(t)$, we
decompose into irreducible factors the corresponding   families of
\NN\EE\SS-walks. 
\begin{itemize}
\item A general \NN\EE\SS-walk is either
\begin{itemize}
\item empty, or
\item irreducible, or
\item \NN-reducible: in this case, it consists of an irreducible copositive
\NN\EE\SS-walk, followed by a sequence of irreducible
\NN\EE\SS-bridges  (forming a 
\NN\EE\SS-bridge), and finally by an irreducible positive \NN\EE\SS-walk; or 
\item  symmetrically, \SS-reducible.
\end{itemize}
Since $1+tB(t)$ is the \gf\ of bridges, this gives
\[T(t)=1+T_i(t)+2Q_i(t)\bigl(1+tB(t)\bigr)P_i(t).\]
\item We now specialize the above decomposition to  positive
  \NN\EE\SS-walks. Observe that when such a walk is \NN-reducible, its
  first factor is a bridge. This allows us to merge the second and
  third cases above. Moreover, the
  fourth case never occurs. Thus a  positive   \NN\EE\SS-walk is either
\begin{itemize}
\item empty, or
\item a \NN\EE\SS-bridge followed by an irreducible positive \NN\EE\SS-walk.
\end{itemize}
This yields:
\[P(t)=1+\bigl(1+tB(t)\bigr)P_i(t).\]
\item We proceed similarly for  copositive \NN\EE\SS-walks. Such a
  walk is either 
\begin{itemize}
\item empty, or
\item an irreducible   copositive \NN\EE\SS-walk followed by a \NN\EE\SS-bridge. 
\end{itemize}
This gives:
\[Q(t)=1+Q_i(t)\bigl(1+tB(t)\bigr).\]
\end{itemize}
We thus obtain the expressions of $T_i$, $P_i$ and $Q_i$ announced in the
proposition.

\medskip
Recall from Proposition~\ref{prop-equiv-general} that a walk is weakly
directed if and only if its irreducible factors
are \NN\EE\SS- or \NN\SS\WW-walks.
Thus,
\begin{itemize}
\item a weakly directed walk is either
\begin{itemize}
\item empty, or
\item an irreducible \NN\EE\SS- or \NN\SS\WW-walk, or
\item \NN-reducible:
it then factors into an irreducible copositive \NN\EE\SS- or \NN\SS\WW-walk,
a sequence of \NN\EE\SS- or \NN\SS\WW- irreducible bridges (forming a
weakly directed bridge), and an irreducible positive \NN\EE\SS- or
\NN\SS\WW-walk; or 
\item symmetrically, \SS-reducible.
\end{itemize}
\end{itemize}
The contribution to $\overline W(t)$ of the first case is obviously 1.
The only irreducible walks that are both \NN\EE\SS\  and
\NN\SS\WW\  are \NN\  and 
\SS.
The generating function of 
 irreducible \NN\EE\SS- or \NN\SS\WW-walks is thus $2T_i(t)-2t$.
Similarly, the \gf\
of irreducible positive (resp. copositive) \NN\EE\SS- or \NN\SS\WW-walks
is $2P_i(t)-t$ 
(resp. $2Q_i(t)-t$). In both cases, the term $-t$ corresponds to the walk 
reduced to a \NN\ step, which is both \NN\EE\SS\ and \NN\SS\WW.
Adding the contributions of the four classes yields the announced
expression of  $\overline W(t)$.
\end{proof}

\subsection{Asymptotic results}
\label{sec:asympt}
\begin{Proposition}\label{prop:asympt-h}
 The \gf\ $W$ of  weakly directed
  bridges, given in Proposition~{\rm\ref{prop:weakly-h}}, is
  meromorphic in the disk 
  $\cD=\{z: |z|< \sqrt 2 -1\}$. It has a unique dominant pole in this disk,
  $\rho\simeq 0.3929$. This pole is simple. Consequently,  the number
  $w_n$ of   weakly  directed bridges of length $n$ satisfies
\[
w_n \sim \kappa \mu^n,
\]
with $\mu=1/\rho\simeq 2.5447$. 

Let $N_n$ denote the number  of irreducible factors in a random
 weakly directed bridge of length $n$. The mean and
variance of $N_n$ satisfy:
\[
\E(N_n) \sim  \mathfrak m  \, n,  \quad  \quad
\Var(N_n) \sim  \mathfrak s ^2 \, n ,  
\]
where 
\[
 \mathfrak m  \simeq 0.318
\quad \hbox{and} \quad
\mathfrak s ^2 \simeq 0.7,
\]
 and the random variable
$
\frac{N_n-  \mathfrak m \, n}{\mathfrak s \sqrt n}
$
converges in law to  a standard normal distribution. In particular,
the average end-to-end distance, being bounded from below by
$\E(N_n)$, grows linearly with~$n$.

These results hold as well for general weakly directed walks, with
other values of $\kappa$, $\mathfrak m $ and~$\mathfrak s$.
\end{Proposition}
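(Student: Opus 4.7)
The plan is to apply the standard machinery of analytic combinatorics (Pringsheim's theorem, the meromorphic transfer theorem, and Hwang's quasi-power theorem) once the analytic structure of $W(t)$ inside $\cD$ is understood. The first step is to show that $B(t)=\sum_{k\ge 0}B^{(k)}(t)$ is holomorphic on $\cD$. The algebraic function $U(t)$ appearing in Proposition~\ref{prop:culm-h} is defined by the square root of $(1-t^4)(1-2t-t^2)$, whose zeros are the fourth roots of unity and $-1\pm\sqrt 2$; none lies in the open disk $\cD=\{|t|<\sqrt 2-1\}$. Hence $U$ is holomorphic on $\cD$, and since $U(t)\sim t$ near the origin while $U(t)=1$ only at $|t|=\sqrt 2-1$, one has $|U(t)|<1$ throughout $\cD$. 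The third closed form in Proposition~\ref{prop:culm-h} then yields the uniform estimate $B^{(k)}(t)=O(|U(t)|^k)$ on compact subsets of $\cD$, so $B(t)$ converges normally to a holomorphic function. By Proposition~\ref{prop:weakly-h}, $W(t)$ is thus meromorphic on $\cD$, with poles exactly at the zeros of $1+t-tB(t)(1-t)$.

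Next I would locate the dominant singularity. Since the coefficients of $I(t)$, and hence of $W$, are nonnegative, Pringsheim's theorem puts the minimum-modulus singularity on the positive real axis. A numerical root of $1=I(\rho)$ gives $\rho\simeq 0.3929$, which lies in $(0,\sqrt 2-1)\subset\cD$; simplicity of $\rho$ reduces to $I'(\rho)\ne 0$, checked numerically. Uniqueness of $\rho$ on the circle $|t|=\rho$ follows from aperiodicity of the coefficient sequence of $I$ (strictly positive from the first index onwards), via a standard triangle-inequality argument showing $|I(t)|<I(|t|)=1$ for $t\ne\rho$ with $|t|=\rho$. The meromorphic transfer theorem then yields $w_n\sim\kappa\mu^n$ with $\mu=1/\rho$ and $\kappa=-1/(\rho\,I'(\rho))$.

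For the Gaussian limit on $N_n$, I would introduce $W(t,y)=1/(1-yI(t))$ marking irreducible factors by $y$ and apply Hwang's quasi-power theorem (Theorem~IX.9 in Flajolet--Sedgewick). The implicit function theorem applied to $I(\rho(y))=1/y$ at $(y,\rho)=(1,\rho)$ uses $I'(\rho)\ne 0$ to produce a holomorphic branch $\rho(y)$ near $1$, giving $\mathfrak m=-\rho'(1)/\rho$ and $\mathfrak s^2$ as the standard second-order combination of $\rho(1)$, $\rho'(1)$, $\rho''(1)$; non-degeneracy $\mathfrak s^2>0$ is to be verified numerically. The lower bound on the end-to-end distance is then essentially free: in a bridge, successive irreducible factors stack strictly in height, so the total height is at least $N_n$, and since the Euclidean distance dominates the height, $\E(\mathrm{dist})\ge \E(N_n)\sim\mathfrak m\,n$.

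For general weakly directed walks, Proposition~\ref{prop:general-weakly-h} exhibits $\overline W(t)$ as a rational combination of $W(t)$ and the series $T_i,P_i,Q_i$, all holomorphic on $\cD$ by the same reasoning. Only $W(t)$ contributes a pole at $\rho$, and that pole is simple with a nonzero contribution from the other factors at $t=\rho$ (to be checked by direct substitution), so $\overline W$ inherits the dominant behaviour with a new amplitude. A parallel bivariate analysis, marking irreducible factors throughout the decomposition, yields the corresponding Gaussian law. The principal obstacle I anticipate is rigorously certifying uniqueness of $\rho$ on the circle $|t|=\rho$, that is, controlling $I(t)$ globally on that circle rather than just at the numerically obvious real point.
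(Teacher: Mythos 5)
Your overall outline is the right one — rewrite $W=1/(1-I)$, show $I$ is analytic in $\cD$, find the unique real pole, then run the supercritical-sequence machinery — but there are two places where it diverges substantively from the paper, one of which is a genuine gap.

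First, to show $B$ (and hence $I$) is holomorphic on $\cD$, you analyze the closed form in $U$ and claim $|U(t)|<1$ throughout $\cD$. That claim is true but not as cheap as you make it sound: the locus $|U|=1$ is a nontrivial real curve (the paper spends Lemma~\ref{lem:U} and Lemma~\ref{lem:acc} on it), and one would have to verify it avoids $\cD$. The paper instead uses a one-line combinatorial domination: $I(t)$ counts a subclass of \NN\EE\SS-walks, whose total generating function $T(t)=\frac{1+t}{1-2t-t^2}$ is rational with radius $\sqrt2-1$, so $I$ has radius at least $\sqrt2-1$ with nonnegative coefficients. This is cleaner, gives the analyticity of $T_i,P_i,Q_i$ for the general-walk case for free, and — crucially — is exactly the comparison series needed in the next step.

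Second, and more importantly: you assert that ``a numerical root of $1=I(\rho)$ gives $\rho\simeq 0.3929$.'' But $I(t)$ is defined as an infinite sum (through $B=\sum_k B^{(k)}$), so you cannot simply evaluate $I$ at a point and solve; you must certify both that $I$ actually reaches the value $1$ strictly inside $\cD$ and that the reported digits of $\rho$ are correct. The paper does this by sandwiching: for $0<t<\sqrt2-1$ it proves $I_{\le n}(t)\le I(t)\le I_{\le n}(t)+2\bigl(T(t)-T_{\le n}(t)\bigr)$ (the factor $2$ for the \NN\EE\SS/\NN\SS\WW\ symmetry), where both bounds are exactly computable polynomials plus a known rational tail. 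Solving $I^+(\rho^-)=1$ and $I^-(\rho^+)=1$ gives rigorous two-sided bounds $\rho^-\le\rho\le\rho^+$, and $n=300$ pins down five digits. Your proposal lacks this step entirely, and it is the essential rigorous ingredient: without it, neither the existence of $\rho$ in $\cD$ nor the claimed numerical value $\mu\simeq 2.5447$, nor the bounds on $\mathfrak m$ and $\mathfrak s$, is established. (Your worry about uniqueness on the circle $|t|=\rho$ is, by contrast, handled by the standard aperiodicity/triangle-inequality argument you already cite and is not an obstacle.)

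The remaining pieces — Pringsheim plus aperiodicity for uniqueness, simplicity from $I'(\rho)\neq 0$, Hwang/quasi-powers versus Flajolet--Sedgewick's Prop.~IX.7 for the supercritical sequence, and the stacking argument giving height $\ge N_n$ — match the paper in substance, with Hwang's theorem being an acceptable substitute for the supercritical-sequence proposition the paper invokes.
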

\begin{proof}
%
Recall from the proof of Proposition~\ref{prop:weakly-h} that
$W(t)=1/(1-I(t))$, 
  where $I(t)$ counts partially directed  irreducible bridges, which
  are certain \NN\EE\SS- or \NN\SS\WW-walks.  The
  \gf\ $T(t)$ of  \NN\EE\SS-walks, given  in 
 Lemma~\ref{lem:TPQ}, has radius of convergence $\sqrt 2 -1$. Hence,
 $I$ has radius of convergence at least  $\sqrt 2 -1$, and $W$
 is meromorphic in the disk $\cD$.  

In this disk, we find a pole at each value of $t$ for which $I(t)=1$.
As $I(t)$ has non-negative coefficients
and is aperiodic,
  a pole of minimal modulus,
if it exists, can only be real, positive and simple. Thus if there is a pole
in $\cD$, then $W$ has a unique \emm dominant, pole $\rho$, which is
simple, and the
asymptotic behaviour of the numbers $w_n$ follows. 

In order to prove the existence of $\rho$, we use upper and lower
bounds on the series $I(t)$. For any series $F(t)=\sum_{m\ge 0} f_m
t^m$, and  $n\ge 0$, denote $F_{\le n}(t):= \sum_{m= 0}^n f_m
t^m$ and  $F_{> n}(t) :=\sum_{m>n} f_m t^m$.
Then for $0<t<\sqrt 2 -1$ and  $n\ge 0$, we have
\beq\label{A-bound-h}
I^-(t) \le I(t) \le I^+(t),
\eeq
where the series
$$
I^-(t):= I_{\le n}(t)
\quad \hbox{ and } \quad 
I^+(t):= I_{\le n}(t) + 2T_{>n}(t)=  I_{\le n}(t) + 2T(t)- 2T_{\le n}(t)
$$
can be evaluated exactly for a given value of $n$.
The upper bound follows from the fact that $I$ counts walks that are
either \NN\EE\SS- or \NN\SS\WW-walks. Using  these bounds,  we can  prove the existence of $\rho$ and
locate it. More precisely, 
\beq\label{rho-bound-h}
\rho^-\le \rho \le \rho^+,
\eeq
 where
$$
I^-(\rho^+) = I^+(\rho^-)=1.$$
Taking $n=300$ gives 5 exact digits in $\mu=1/\rho$.

\medskip
Let us now study the number of irreducible bridges
in a random weakly directed bridge.
  The series that counts these
 bridges by their length and the number of irreducible bridges is
\beq\label{Wtx}
W(t,x)= \frac 1 {1-x I(t)}.
\eeq
One easily checks that $W(t,x)$ corresponds to a  \emm supercritical
sequence,, so that Prop.~IX.7  of~\cite{flajolet-sedgewick} applies and
establishes the existence of a gaussian limit law, after
standardization. Regarding the estimates of $\mathfrak m$
 and $\mathfrak s$,
we have 
$$ 
\mathfrak m= \frac 1{\rho I'(\rho)} \quad \hbox{and} \quad 
\mathfrak s^2= \frac {I''(\rho)+I'(\rho)-I'(\rho)^2}
{\rho I'(\rho)^3} 
.$$
As $I(t)$ has non-negative coefficients, we can combine the
bounds~\eqref{A-bound-h} on $I(t)$ 
and~\eqref{rho-bound-h} on $\rho$ to obtain bounds on the values of
$\mathfrak m$ and $\mathfrak s$.

\medskip Consider now the \gf\ $\overline W$ of general weakly
directed walks, given in Proposition~\ref{prop:general-weakly-h}. The
series $T_i$, $P_i$ and $Q_i$ count certain partially directed walks,
and thus have radius at least $\sqrt 2 -1$. Moreover, $2Q_i(t)-t>0$
and $2P_i(t)-t>0$ for $t>0$. Hence $\overline W$ has, as $W$ itself,  a unique dominant
pole in $\cD$, which is $\rho$. 

 The argument used to prove Proposition~\ref{prop:general-weakly-h}
 shows that the series that 
counts weakly directed walks by their length and the number of
irreducible factors  is
$$
\overline W(t,x)= 1+ x(2T_i(t)-2t) + 2x^2(2Q_i(t)-t)W(t,x)(2P_i(t)-t),
$$
where $W(t,x)$ is given by~\eqref{Wtx}. This yields the announced
results on the number of irreducible factors. 
\end{proof}

\subsection{ Nature of the series}
\label{sec:nature}
\begin{Proposition}\label{prop:nature}
  The \gf\ $\PB=\sum_{k\ge 0} B^{(k)}(t)$ of \NN\EE\SS-pseudo-bridges, given in
  Proposition~{\rm\ref{prop:culm-h}},  converges   around $0$ and has
a  meromorphic
continuation  in $\cs\setminus \cE$, where $\cE$ consists of the
two real intervals $[-\sqrt 2-1, -1]$ and $[\sqrt 2-1,1]$, and of the curve
$$
\cE_0=\left\{x+iy: x\ge 0, \ {y}^{2}={\frac {1-{x}^{2}-2\,{x}^{3}}{1+2\,x}}
\right\}.
$$
This curve, shown in Fig.~{\rm\ref{fig:crit}}, is a natural boundary of
$\PB$. That is, every point of $\cE_0$ is a singularity of $\PB$.

The above statements  hold as well for the \gf\  $W$ of weakly
directed bridges, given in Proposition~{\rm\ref{prop:weakly-h}}. In
particular,  neither $\PB$ nor $W$ is D-finite. 
\end{Proposition}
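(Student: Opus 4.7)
The plan is to exploit the closed-form expression of $\PB^{(k)}(t)$ from Proposition~\ref{prop:culm-h} in terms of the roots $U,\bU=1/U$ of the polynomial $tu^2-(1-t+t^2+t^3)u+t$, and to show that the zeros of the $G_k$ accumulate densely on $\cE_0$.

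The first step is to identify $\cE$ with the locus $\{t\in\cs:|U(t)|=1\}$. Since $U\bU=1$, the condition $|U|=1$ is equivalent to $U$ and $\bU$ being complex conjugates, which in turn is equivalent to $U+\bU=(1-t+t^2+t^3)/t$ being real and lying in $[-2,2]$. Writing $t=x+iy$ with $y\neq 0$ and setting $\Im\bigl((1-t+t^2+t^3)/t\bigr)=0$ reduces (after dividing by $y$) to $y^2(1+2x)=1-x^2-2x^3$; the supplementary condition $(1-t+t^2+t^3)/t\in[-2,2]$ then selects the branch with $x\ge 0$, giving $\cE_0$. For real $t$, solving $1-t+t^2+t^3=\pm 2t$ and analysing the sign of $(1-t+t^2+t^3)/t$ yields the intervals $[\sqrt 2-1,1]$ and $[-\sqrt 2-1,-1]$. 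Hence $\cs\setminus\cE=\{|U|\neq 1\}$ decomposes into two open regions $\Omega_\pm=\{|U|\gtrless 1\}$ swapped by $U\leftrightarrow\bU$.

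The second step constructs a meromorphic extension of $\PB$ to $\cs\setminus\cE$. Setting $\phi=(1-t)U-t$ and $\psi=(1-t)\bU-t$, Proposition~\ref{prop:culm-h} gives $\PB^{(k)}=(U-\bU)(t/U)^k/(\phi-\psi U^{-2k})$. On any compact subset of $\Omega_-$, both $|U^{-2}|$ and $|t/U|$ are strictly smaller than $1$, so expanding the denominator geometrically and summing over $k$ yields
\[
\PB(t)=\frac{U-\bU}{\phi}\sum_{j\ge 0}\Bigl(\frac{\psi}{\phi}\Bigr)^j\frac{1}{1-tU^{-(2j+1)}},
\]
which converges uniformly on compacta of $\Omega_-$ and exhibits $\PB$ as meromorphic there. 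The symmetric representation (with $U\leftrightarrow\bU$) handles $\Omega_+$; both expressions are compatible with the original series $\sum_k\PB^{(k)}$ on its disc of convergence, producing a meromorphic continuation of $\PB$ on $\cs\setminus\cE$.

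The heart of the proof is that $\cE_0$ is a natural boundary. A zero $t^*$ of $G_k$ is characterised by $U(t^*)^{2k}=\psi(t^*)/\phi(t^*)$; the right-hand side is analytic, bounded and nonvanishing on compact arcs of $\cE_0$. Taking moduli gives $|U(t^*)|=|\psi(t^*)/\phi(t^*)|^{1/(2k)}\to 1$ as $k\to\infty$, so such zeros concentrate near $\cE$. Along $\cE_0$ the argument of $U(t)$ varies analytically and non-constantly, so for each $k$ the equation $U^{2k}=\psi/\phi$ admits $\Theta(k)$ solutions near any fixed arc of $\cE_0$; their union over $k\to\infty$ is dense on $\cE_0$. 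Each such $t^*$ is a simple pole of $\PB^{(k)}$; for large $k$ it is not a pole of any $\PB^{(j)}$ with $j\neq k$, as a common zero would force $U(t^*)^{2(k-j)}=1$, leaving only finitely many algebraic candidates. The tail $\sum_{\ell>k}\PB^{(\ell)}$ is holomorphic on a small disc around $t^*$ by the geometric decay $|t/U|^\ell$, and a direct residue computation shows that the residue of $\PB^{(k)}$ at $t^*$ is nonzero. Consequently $\PB$ has genuine poles accumulating at every point of $\cE_0$, precluding any meromorphic continuation across $\cE_0$; since D-finite functions have only finitely many singularities, $\PB$ is not D-finite. The same conclusion transfers to $W$ via Proposition~\ref{prop:weakly-h}, where $W$ is a rational function of $\PB$ and accumulating poles of $\PB$ cannot all be cancelled by a finite rational expression. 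The main obstacle is this non-cancellation step, which I would handle by an explicit asymptotic analysis of residues, using the generic nonvanishing of $\phi$, $\psi$ and $U-\bU$ on $\cE_0$ together with the disjointness (for large $k$) of the zero sets of $G_k$ and $G_j$ for $j\neq k$.
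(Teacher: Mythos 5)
Your strategy mirrors the paper's: characterize $\cE$ as the locus $\{|U|=1\}$, use the closed form of $\PB^{(k)}$ to get a meromorphic continuation on $\cs\setminus\cE$, and show $\cE_0$ is a natural boundary by exhibiting poles of $\PB$ (non-real zeros of the $G_k$) accumulating on every point of $\cE_0$. Your Step~2, which replaces the paper's local argument by an explicit double geometric expansion, is a legitimate variant, though you have sign/bookkeeping slips (the factors $(t/U)^k$ and $1/(1-tU^{-(2j+1)})$ should be $U^{-k}$ and $1/(1-U^{-(2j+1)})$ respectively, with no extraneous $t$; and in your labeling the region where this expansion converges is $\{|U|>1\}$, not $\{|U|<1\}$).

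The genuine gap is in the natural-boundary step. You establish that zeros of $G_k$ \emph{approach} $\cE_0$ as $k\to\infty$ (since $|U(t^*)|\to 1$) and that they are dense near $\cE_0$, but you never prove that these zeros actually lie \emph{off} $\cE$. This is essential: $\PB$ is only meromorphic in $\cs\setminus\cE$, so a zero of $G_k$ that happens to sit on $\cE$ is not a pole of the continued function and contributes nothing to the argument. The paper closes this gap with a separate claim (Lemma~\ref{lem:acc}, second assertion): if $G_k(z)=0$ and $z\notin\rs$, then $z\notin\cE$. The proof is short --- writing $U(z)=e^{i\theta}$ on $\cE$, the equation $G_k(z)=0$ becomes $\frac{z}{1-z}=\frac{\sin((k+1)\theta)}{\sin(k\theta)}$, forcing $z$ to be real. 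Without this, your ``each such $t^*$ is a simple pole of $\PB^{(k)}$'' and ``the tail is holomorphic near $t^*$'' have no domain to live on. Relatedly, your justification that $t^*$ cancels no other $G_j$ --- ``finitely many algebraic candidates'' --- is a detour; the clean argument (used in the paper) is that $G_k(t^*)=G_j(t^*)=0$ forces $U(t^*)^{2(k-j)}=1$, hence $|U(t^*)|=1$, hence $t^*\in\cE$, contradicting the (missing) lemma. Finally, the ``explicit asymptotic analysis of residues'' you flag as an obstacle is unnecessary: once $t^*\ne0$ is a simple zero of $G_k$ and of no other $G_j$, the residue of $\PB^{(k)}=t^k/G_k$ is $t^{*k}/G_k'(t^*)\ne0$ automatically, and the holomorphy of $\sum_{j\ne k}\PB^{(j)}$ near $t^*$ is already delivered by your Step~2.
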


\begin{figure}[htb]
\includegraphics[scale=0.4]{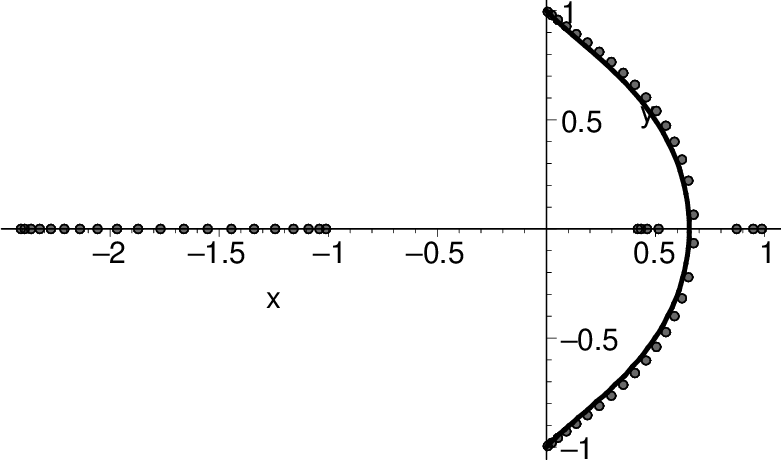}
\caption{The curve $\cE_0$ and the zeroes of $G_{20}$.} 
\label{fig:crit}
\end{figure}

Before proving this proposition, let us establish two lemmas, dealing
respectively with the series $U$ and the polynomials $G_k$ occurring
in the expression of $\PB$ (Proposition~\ref{prop:culm-h}). 

\begin{Lemma}
\label{lem:U}
For $t\in \cs\setminus\{0\}$, the equation
$t(u+1/u)=1-t+t^2+t^3$ has two roots, counted with multiplicity. The
product of these roots is $1$. Their modulus is $1$ if and only if $t$
belongs to the set $\cE$ defined in Proposition~{\rm\ref{prop:nature}}.

Let 
$$
U(t)= {\frac {1-t+{t}^{2}+{t}^{3}-
\sqrt { \left(1- t^4 \right)   \left(1-2t- {t}^{2}\right)  }}
{2t}}
$$
 be the root that is defined at $t=0$.
   This series has radius of convergence $\sqrt 2-1$. It has
   singularities at $\pm \sqrt 2-1$, $\pm 1$ and $\pm i$, and admits
   an analytic continuation in 
$$\cs\setminus \left([-\sqrt 2 -1, -1] \cup 
[\sqrt 2-1, 1] \cup [i, i\infty) \cup [-i, -i\infty)\right).$$ 
\end{Lemma}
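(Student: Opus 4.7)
The equation $t(u+1/u) = 1-t+t^2+t^3$, for $t \in \cs \setminus \{0\}$, is equivalent to the quadratic $tu^2 - (1-t+t^2+t^3)u + t = 0$. By Vieta's formulas its two roots (counted with multiplicity) have product $t/t = 1$ and sum $s(t) := 1/t - 1 + t + t^2$. Since the product is $1$, the two roots both lie on the unit circle if and only if they are complex conjugates of each other on $\{|u|=1\}$, which is equivalent to $s(t) \in \rs$ with $|s(t)| \le 2$: the roots of $u^2 - su + 1 = 0$ with $s \in [-2,2]$ real are $(s\pm i\sqrt{4-s^2})/2$, of modulus $1$. So $\cE = \{ t \neq 0 : s(t) \in [-2,2]\}$.

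\textbf{Characterising $\cE$.} Writing $t = x+iy$, a direct computation gives
$$\Im\,s(t) = y\left(1 + 2x - \frac{1}{x^2+y^2}\right),$$
so $s$ is real iff $y=0$, or else $x^2+y^2 = 1/(1+2x)$ (forcing $x > -1/2$). In the first case the identities
$$s+2 = \frac{(1+t)(1+t^2)}{t}, \qquad s-2 = \frac{(t-1)(t^2+2t-1)}{t}$$
and a sign analysis give $|s|\le 2 \iff t \in [\sqrt 2 - 1, 1] \cup [-\sqrt 2 - 1, -1]$. In the second case, substituting $1/t = (1+2x)\bar t$ yields $s = 3x^2+2x-1-y^2$; inserting the curve equation and using
$$s-2 = \frac{2(x+1)(2x^2-1)}{1+2x}, \qquad s+2 = \frac{4x(2x^2+2x+1)}{1+2x}$$
one checks that $|s|\le 2$ is equivalent to $x \ge 0$. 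Combined with $y^2 = (1-x^2-2x^3)/(1+2x)$, this reproduces the curve $\cE_0$.

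\textbf{The series $U$ and its radius.} An elementary calculation factors the discriminant of the quadratic as $(1-t+t^2+t^3)^2 - 4t^2 = (1-t^4)(1-2t-t^2)$, justifying the closed-form expression for $U(t)$. Among the two branches, the one stated is characterised by $U(0)=0$ (and has expansion $U(t) = t + t^2 + O(t^3)$). Its singularities lie inside the zero set of the discriminant, namely $\{\pm 1,\pm i, -1\pm\sqrt 2\}$. The closest of these to the origin is $\sqrt 2 - 1$, and since the numerator $1-t+t^2+t^3$ does not vanish there, this is a genuine square-root singularity: hence the radius of convergence is exactly $\sqrt 2 - 1$.

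\textbf{Analytic continuation and main obstacle.} To extend $U$ one chooses branch cuts pairing the six finite branch points consistently. The segments $[-\sqrt 2-1,-1]$ and $[\sqrt 2-1,1]$ pair the real branch points on each side. For $\pm i$, the rays $[i,i\infty)$ and $[-i,-i\infty)$ form, on the Riemann sphere, a single arc from $i$ to $-i$ through $\infty$; this is admissible because $\infty$ is \emph{not} a branch point of $\sqrt\Delta$, as $\Delta \sim -t^6$ at infinity gives $\sqrt\Delta \sim \pm it^3$ single-valued there (equivalently, monodromy around a large circle enclosing all six finite branch points is $(-1)^6 = 1$). The complement of the four cuts is simply connected and disjoint from the zeros of $\Delta$, so a single-valued holomorphic determination of $\sqrt\Delta$ exists there, giving the stated analytic continuation of $U$. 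The most delicate step is the treatment of the curve $x^2+y^2=1/(1+2x)$: identifying which portion corresponds to modulus-one roots requires the algebraic factorisations above and the careful reduction to $x \ge 0$.
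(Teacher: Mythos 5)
Your proof is correct and takes essentially the same approach as the paper: working with $s(t) = u + 1/u = 2f(t)$ and requiring $s\in[-2,2]$ is a trivial rescaling of the paper's $f(t)=\cos\theta\in[-1,1]$, and your computation of $\Im s$ and the sign analysis on the curve $x^2+y^2 = 1/(1+2x)$ reproduce the paper's argument; you simply make explicit the branch-cut reasoning that the paper dispatches as "basic complex analysis". Two harmless arithmetic slips worth correcting: $s-2 = \frac{4(x+1)(2x^2-1)}{1+2x}$ (coefficient $4$, not $2$), and $\Delta\sim t^6$ at infinity (not $-t^6$), so $\sqrt\Delta\sim\pm t^3$ there; neither affects the conclusions.
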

\begin{proof}
  The first two statements are obvious. Now assume that the roots $u$
  and $1/u$ have
  modulus~$1$,  that is, $u=e^{i\theta}$ for
  $\theta\in\rs$. This means that 
$
f(t):=\frac{1-t+t^2+t^3}{2t}=\cos \theta$ is real, and belongs to the interval
$[-1,1]$. Write $t=x+iy$, and express $\Im f(t)$ in terms of $x$ and
$y$. One finds that $f(t)$ is real if and only if
either $y=0$ (that is, $t \in \rs$) or 
\beq\label{critcurve}
y^2(1+2x)=1-{x}^{2}-2\,{x}^{3}.
 \eeq 
Since $y^2\ge 0$,  this is only possible if $-1/2<x \le x_c$ where 
$x_c \sim 0.65...$ satisfies $ 1-{x}_c^{2}-2\,{x}_c^{3}=0$.
Observe that the above curve includes $\cE_0$.

For real values of $t$, an elementary study of $f$ shows that
$f(t)\in [-1,1]$ if and only if $t\in [-\sqrt 2-1, -1]\cup [\sqrt
  2-1,1]$ (see Fig.~\ref{fig:f}, left). 
If $t=x+iy$ is non-real and~\eqref{critcurve} holds, then $f(t)={\frac
  {-1+4\,{x}^{2}+4\,{x}^{3}}{1+2\,x}}$. Given that $-1/2<x \le x_c$, this
belongs to $[-1,1]$ if and only if $x$ is non-negative (see
Fig.~\ref{fig:f}, middle). We have thus proved that $|u|=1$ if and
only if $t \in \cE$.

\smallskip
The properties of the series $U$ follow from  basic complex
analysis. Of course, one may choose the position of the cuts
differently, provided they include the 6 singularities. With the cuts
along the coordinate axes, a plot of the
modulus of $U$ is shown on the right of Fig.~\ref{fig:f}.
\end{proof}

\begin{figure}[htb]
\includegraphics
[scale=0.25]{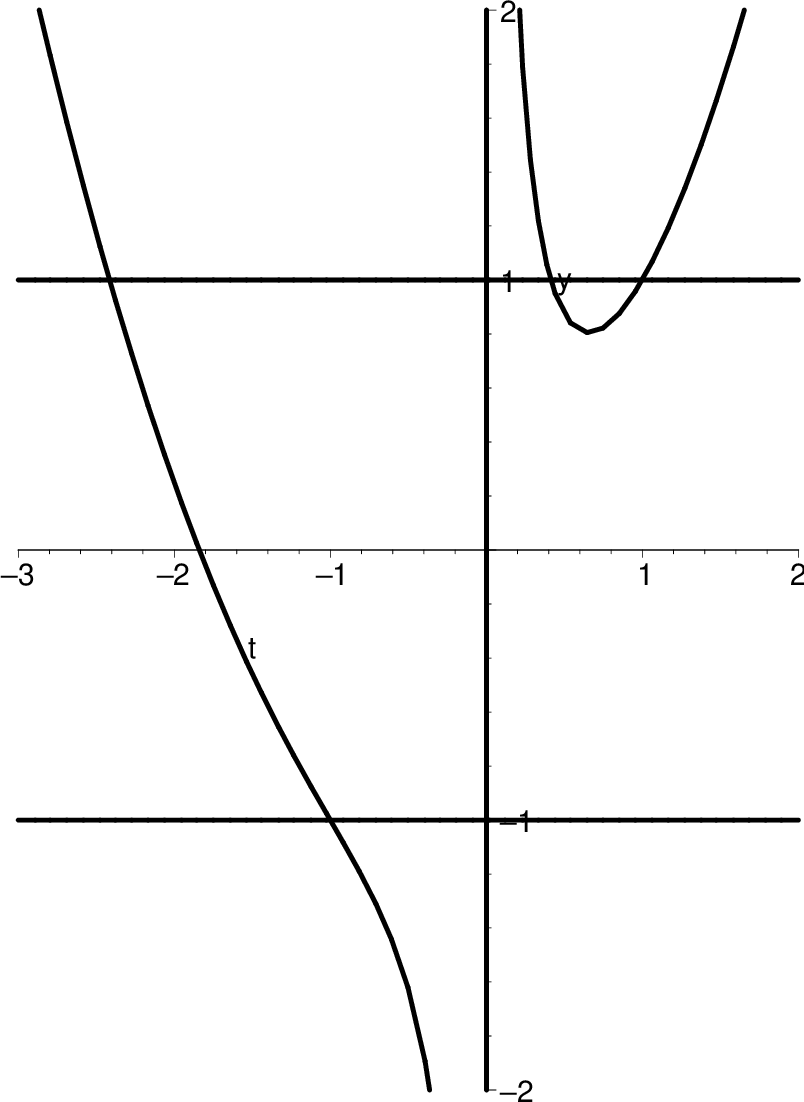}
\hskip 10mm
\includegraphics
[scale=0.25]{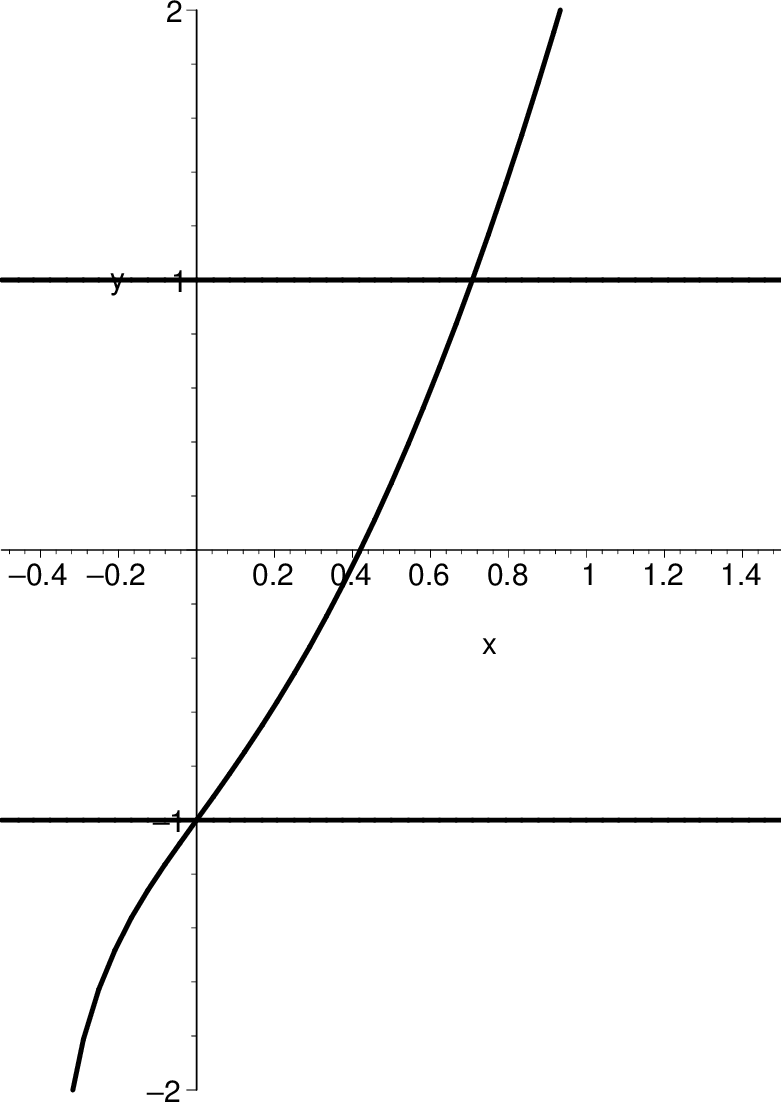}
\hskip 10mm
\includegraphics
[scale=0.3]{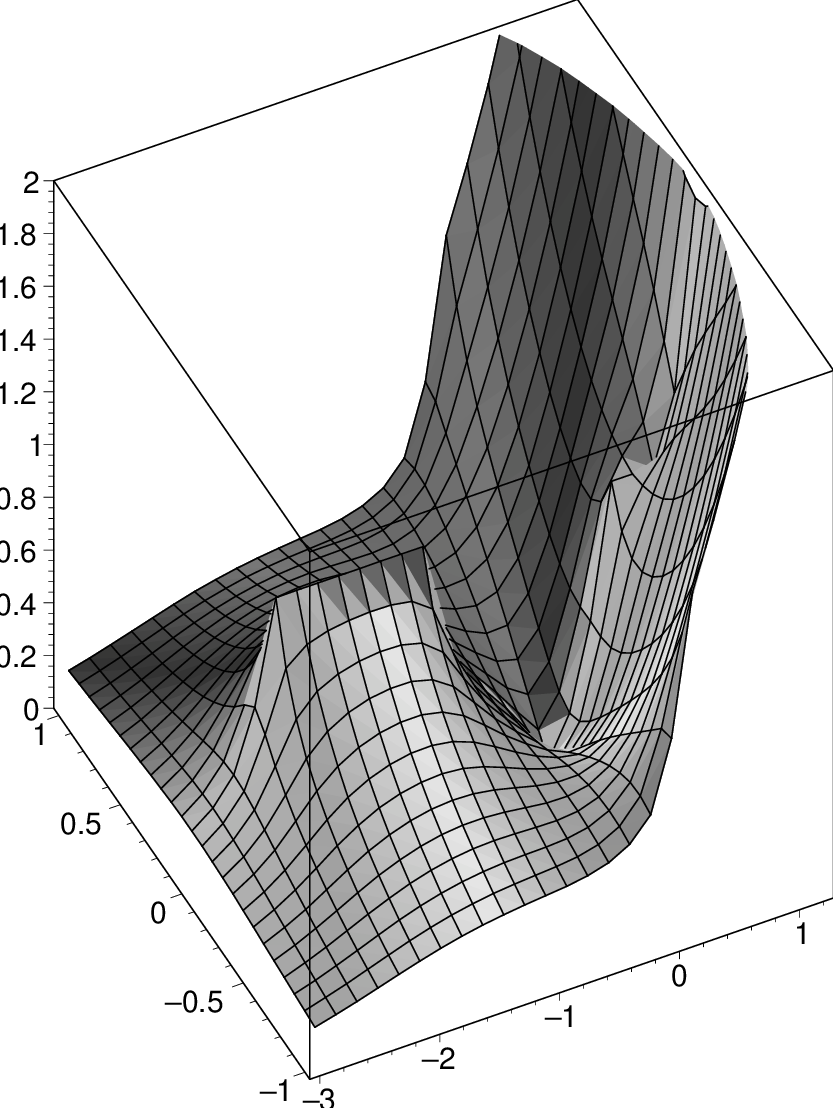}
\caption{The functions $t\mapsto f(t)=\frac{1-t+t^2+t^3}{2t}$,  $x\mapsto {\frac
  {-1+4\,{x}^{2}+4\,{x}^{3}}{1+2\,x}}$, and a plot of the modulus of
  $U$, showing the two cuts on the real axis.}
\label{fig:f}
\end{figure}

\begin{Lemma}\label{lem:acc}
Let $\cE$ be the subset of $\cs$ defined in
Proposition~{\rm\ref{prop:nature}}, and $G_k$ the polynomials of Proposition~{\rm\ref{prop:culm-h}}.\\
If $G_k(z)=G_\ell(z)=0$ with  $\ell\not = k$, then $z\in \cE$.\\
If $G_k(z)=0$ and $z$ is non-real, then $z\not \in \cE$.\\
The set of accumulation points of  roots of the  polynomials $G_k$
is  exactly $\cE$.
\end{Lemma}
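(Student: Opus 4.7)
The key input is the closed-form solution of the recurrence defining the polynomials $G_k$: its characteristic equation is precisely $t u^2 - (1-t+t^2+t^3)u + t = 0$, so, away from the discriminant locus $U = \bar U$,
\[
G_k(t) = \alpha(t)\, U^k + \beta(t)\, \bar U^k,
\]
with $\alpha, \beta$ explicit rational functions of $U$ and $t$. Using $U \bar U = 1$, the condition $G_k(z) = 0$ becomes
\[
U(z)^{2k} = \phi(z) := \frac{\bar U(1-z) - z}{U(1-z) - z}.
\]
Parts~1 and~2 follow quickly from this reformulation.

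For Part~1, if $G_k(z) = G_\ell(z) = 0$ with $k \ne \ell$, then $U(z)^{2(k-\ell)} = 1$, forcing $|U(z)| = 1$ and hence $z \in \cE$ by Lemma~\ref{lem:U}. The six points of the discriminant locus $U = \bar U$ are $\pm 1$, $\sqrt 2 - 1$, $-\sqrt 2 - 1$ and $\pm i$, all of which lie in $\cE$ by inspection. For Part~2, assume $z \in \cE$ is non-real and $G_k(z) = 0$; write $U = e^{i\theta}$ (so that the complex conjugate of $U$ coincides with $\bar U = 1/U$). A direct computation shows
\[
|\phi(z)|^2 - 1 = \frac{4 \sin\theta \cdot \Im(z)}{|U(1-z) - z|^2}.
\]
Since $|U^{2k}| = 1$, the equation $U^{2k} = \phi$ forces $|\phi(z)| = 1$, so $\sin\theta = 0$ and $U(z) = \pm 1$. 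The only non-real points with $U = \pm 1$ are $z = \pm i$, where the characteristic polynomial has a double root; solving the linear recurrence directly at $t = i$ gives $G_k(i) = (k + 1 - i)(-i)^k$, which never vanishes, and the case $t = -i$ is analogous.

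For Part~3, the inclusion ``accumulation points of the roots $\subset \cE$'' is easy: if $|U(z_0)| > 1$, then $|G_k(z_0)| \sim |\alpha(z_0)|\,|U(z_0)|^k \to \infty$, so $G_k$ does not vanish at $z_0$ for large $k$. The converse is the substantive part. Fix $z_0 \in \cE$ and let $\theta_0$ satisfy $U(z_0) = e^{i\theta_0}$. Away from the six exceptional points, $U(t) = e^{i\theta}$ admits a local analytic inverse $\theta \mapsto t(\theta)$, and the condition $G_k(t(\theta)) = 0$ is equivalent to
\[
\theta = \frac{\pi m}{k} + \frac{\log \phi(t(\theta))}{2 i k}, \qquad m \in \zs.
\]
For large $k$ the right-hand side is an $O(1/k)$-Lipschitz map on a small disc around $\theta_0$; choosing $m$ so that $|\pi m/k - \theta_0| \le \pi/k$, the Banach fixed point theorem yields a fixed point $\theta^*$ within $O(1/k)$ of $\theta_0$, whence $t(\theta^*)$ is a root of $G_k$ tending to $z_0$. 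When $z_0$ is real, $|\phi(z_0)| = 1$ and $\theta^*$ stays real; when $z_0 \in \cE_0$ is non-real, $|\phi(z_0)| \ne 1$ forces $\theta^*$ to acquire a small imaginary part, but $t(\theta^*)$ is still well-defined and tends to $z_0$. The main technical obstacle is to control this contraction uniformly near the six branch points where $\theta \mapsto t(\theta)$ is ramified, but each such point is itself in $\cE$ and is approached by roots located near neighbouring smooth points.
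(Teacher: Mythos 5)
Your overall strategy — expressing $G_k(z)=0$ via the closed form $\alpha U^k+\beta\bar U^k$, hence as $U^{2k}=\phi(z)$, and reducing everything to the criterion $\lvert U(z)\rvert=1\iff z\in\cE$ from Lemma~\ref{lem:U} — is exactly the paper's. Part~1 is identical. Part~2 uses a different but valid identity (you derive $\lvert\phi\rvert^2-1=\frac{4\sin\theta\,\Im z}{\lvert U(1-z)-z\rvert^2}$, whereas the paper writes $\frac{z}{1-z}=\frac{\sin((k+1)\theta)}{\sin(k\theta)}$ and notes the right side is real); you also carefully rule out the degenerate points $z=\pm i$, which the paper leaves implicit. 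Part~3 ``$\supset$'' is handled in the paper by a density trick (pick $\theta_0=j\pi/\ell$ and $k$ a multiple of $\ell$, then a one-term Taylor expansion), which sidesteps the uniformity issues near the six branch points and the critical point $x_c$ that your fixed-point argument acknowledges but does not resolve.

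There is, however, a genuine gap in your ``$\subset$'' direction of Part~3. Showing that $\lvert G_k(z_0)\rvert\to\infty$ for fixed $z_0$ with $\lvert U(z_0)\rvert>1$ only proves that $z_0$ itself is a root of at most finitely many $G_k$; it does not rule out that roots $z_i$ of $G_{k_i}$, none equal to $z_0$, accumulate at $z_0$. You need the estimate to hold uniformly in a neighbourhood of $z_0$, or better, to argue directly with a convergent sequence of roots, as the paper does: starting from $z_i\to z$ with $G_{k_i}(z_i)=0$ and $k_i\to\infty$, the equation $U(z_i)^{2k_i}=\frac{(1-z_i)/U(z_i)-z_i}{(1-z_i)U(z_i)-z_i}$ has, by continuity of $U$ near $z$ (once $z\neq 0$ and $z$ is away from the singularities), a right-hand side tending to a finite nonzero limit, so the left side is bounded away from $0$ and $\infty$, forcing $\lvert U(z)\rvert=1$. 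That sequential framing is what makes the inclusion airtight, and I recommend you adopt it.
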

The latter point is illustrated in Fig.~\ref{fig:crit}.
\begin{proof}
Note first that, for $z\not = 0$, 
$$
{G_k(z)}=z^k\ \frac{\bigl((1-z)u-z\bigr)u^k-\bigl((1-z)/u-z\bigr)u^{-k}}{u-1/u},
$$
where  $u$ and $1/u$ are the
two roots of $z(u+1/u)=1-z+z^2+z^3$. 

Assume  $G_k(z)=G_\ell(z)=0$. Then $z\not =0$ (because
$G_k(0)=1$). The equations  $G_k(z)=G_\ell(z)=0$ imply
$u^{2k}= u^{2\ell}$,
so that $|u|=1$, that is, by Lemma~\ref{lem:U}, $z \in \cE$.

\medskip
Assume  $G_k(z)=0$ and $z$ is non-real. Assume moreover that  $z\in
\cE$. Let $u$ and $1/u$ be defined as above.  By
Lemma~\ref{lem:U},  $|u|=1$. Write  $u= e^{i\theta}$.
Then $G_k(z)=0$ implies
$$
\frac z {1-z}= \frac{\sin((k+1)\theta)}{\sin(k\theta)},
$$
which contradicts the assumption that $z$ is non-real.

\medskip
Now let $z$ be an accumulation point of roots of the $G_k$'s. There
exists a sequence $z_i$ that tends to $z$ such that $G_{k_i}(z_i)=0$,
  with $k_i\rightarrow \infty$. We want
to prove that $z\in\cE$. If $z$ is one of the 6 singularities of $U$,
then there is nothing to prove. Otherwise, $U$ has an
analytic description in a neighborhood of $z$. The equation
$G_{k_i}(z_i)=0$ reads
$$
U(z_i)^{2k_i}= \frac{(1-z_i)/U(z_i)-z_i}{(1-z_i)U(z_i)-z_i}.
$$
By continuity, $U(z_i)\rightarrow U(z)$  as $i\rightarrow \infty$.
If $z=0$, then $U(z)=0$ and the right-hand side diverges while
the left-hand side tends to $0$. This is impossible, and hence $z\not
=0$. This implies that 
the right-hand side tends to a finite, non-zero limit and, by
continuity, forces
 $|U(z)|=1$. By Lemma~\ref{lem:U}, this 
means that $z\in \cE$. 

\medskip
Conversely,   let $z\in \cE$. By Lemma~\ref{lem:U}, the two roots of
  $z(u+1/u)=1-z+z^2+z^3$ can be written $e^{\pm i\theta}$. By density,
we may assume that $\theta=j\pi/\ell$, for $0<j<\ell$. 
This excludes in particular the 6 singular
points of $U$, for which $u=\pm 1$. This means
that $U$ has an analytic description in a neighborhood of $z$, so that
for $t$ close to $z$,
$
U(t)=U(z) ( 1 + s)
$
with 
$s= (t-z) \frac{U'(z)}{U(z)}+ O((t-z)^2)$.
Thanks to the equation satisfied by $U(z)$, it is easy to see that
$U'(z)\not = 0 $ if $z \not = x_c$, where $x_c$ is defined in the
proof of Lemma~\ref{lem:U}.  We assume from now on that $z \not = x_c$
(again, by density, this is a harmless assumption). Let $k$ be a
multiple of $\ell$. The
equation $G_k(t)=0$ reads 
$$
U(t)^{2k}= \frac{(1-t)/U(t)-t}{(1-t)U(t)-t},
$$
or, given that $U(z)^{2k}=e^{2ijk \pi/\ell}=1$,
$$
 ( 1 + s)^{2k} = \frac{(1-z)/U(z)-z}{(1-z)U(z)-z} + O(t-z).
$$
The right-hand side being finite and non-zero,
one finds a root $t$ of $G_k$ in the
neighborhood of $z$:
$$
t=z+ \frac{U(z)}{2k U'(z)} \log\left(
\frac{(1-z)/U(z)-z}{(1-z)U(z)-z}\right)
+ o(1/k),
$$
and this root gets closer and closer to $z$ as $k$ increases. Thus $z$
is an accumulation point of roots of the $G_k$'s.
 \end{proof}

\noindent {\em Proof of Proposition~}\ref{prop:nature}.\\
One has $\PB(t)=\sum t^k/G_k(t)$, with
\beq\label{CkU}
\frac{t^k}{G_k(t)}=
\frac{u-1/u}{\bigl((1-t)u-t\bigr)u^k-\bigl((1-t)/u-t\bigr)u^{-k}},
\eeq
$u$ and $1/u$ being the roots of $t(u+1/u)=1-t+t^2+t^3$. Let us first
prove that this series defines a meromorphic function in $\cs\setminus
\cE$.
Assume $t\not
\in \cE$. By Lemma~\ref{lem:acc}, $t$ is not an accumulation point of
roots of the polynomials $G_k$, and cancels at most one of these
polynomials. Hence there exists a neighborhood of 
$t$ in which at most one  of the $G_k$'s has a zero,
which is  $t$ itself.  Moreover, by Lemma~\ref{lem:U}, one of the
roots $u$ and $1/u$ has modulus larger than $1$. By continuity,  this holds in
a (possibly smaller) neighborhood of $t$. Then~\eqref{CkU} shows that the
series $\sum t^k/G_k(t)$ is meromorphic in the vicinity of $t$. Given
that $\cs\setminus \cE$ is connected, we have proved that this series
defines a meromorphic function in $\cs\setminus \cE$. 
The same holds for $W(t)$, which is a rational function of $t$ and
$\PB(t)$. 

\medskip
Let us now prove that $\cE_0$ is a natural boundary of $\PB$.
By Lemma~\ref{lem:acc},  every non-real zero of
$G_k$ is in  $\cs\setminus \cE$, and does not cancel any other
polynomial $G_\ell$. Hence it is a pole of $\PB$. 
Now let $z \in \cE_0$ with $z\not \in \rs$. By 
Lemma~\ref{lem:acc},  this point is an accumulation point of
(non-real) zeroes of the polynomials $G_k$, and thus an accumulation
point of poles of $\PB$.  Thus it is a singularity of $\PB$, and the whole
curve  $\cE_0$ is a natural boundary of $\PB$. Given that
$\PB$ and $W$ are related by a simple homography, this curve is also a
boundary for $W$.
\qed

\section{The diagonal model}
\label{sec:diag}

 We have defined weakly directed walks in the diagonal model
by requiring
that the portion of the walk joining two visits to the same diagonal
is partially directed. This is analogous to
the definition we had in the horizontal model. The definition of
bridges  is adapted accordingly, by defining the  height of a
vertex as the sum of its coordinates. However, there is no
simple counterpart of Proposition~\ref{prop:equiv}: the irreducible
bridges of a weakly directed bridge may not be partially directed (Fig.~\ref{fig:weak-d}). 
However, it is easy to see that bridges formed of partially directed irreducible bridges are always weakly directed.  
In this section, we enumerate these walks and study their asymptotic
properties.

\subsection{Generating function}
\begin{Proposition}\label{prop:weak-d}
  The \gf\ of bridges formed of partially directed irreducible bridges is
$$
W_\Delta(t)=\frac 1 {1+2t-\displaystyle  \frac{ 2t\PB_1}{1+  t\PB_1}
- \frac{4t \PB_2}{1+ 2t \PB_2}
+\frac{ 2t\PB_0}{1+  t\PB_0}},
$$
where the series $\PB_{i}
=\sum_{k\ge 0} \PB _{i}^{(k) }(t) $ are given
in Propositions~{\rm\ref{prop:culm-d-1}, \ref{prop:culm-d-2},
  \ref{prop:culm-d-0}}.

\end{Proposition}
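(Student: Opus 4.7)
The plan is to mimic the proof of Proposition~\ref{prop:weakly-h}, but replacing the simple union of two classes of partially directed bridges by an inclusion--exclusion over the four classes that arise in the diagonal model. Since every bridge factors uniquely as a sequence of irreducible bridges, a bridge whose irreducible factors are all partially directed is precisely an arbitrary sequence of partially directed irreducible bridges. Hence, writing $I(t)$ for the \gf\ of such building blocks, one has $W_\Delta=1/(1-I)$, so the whole task reduces to computing $I$.

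First I would compute, for each step $a\in\{\NN,\EE,\SS,\WW\}$, the \gf\ $I_a$ of irreducible bridges avoiding $a$. In the diagonal model a non-empty bridge arises by appending a height-one step ($\NN$ or $\EE$) to a pseudo-bridge: an $\EE\SS\WW$-bridge must end with $\EE$ and thus has \gf\ $tB_1$, while an $\NN\EE\SS$-bridge can end with $\NN$ or $\EE$ and has \gf\ $2tB_2$. The standard sequence-of-irreducibles decomposition then gives
$$I_\NN=\frac{tB_1}{1+tB_1},\qquad I_\WW=\frac{2tB_2}{1+2tB_2},$$
and the diagonal symmetry $\NN\leftrightarrow\EE,\,\SS\leftrightarrow\WW$ yields $I_\EE=I_\NN$ and $I_\SS=I_\WW$.

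Next I would set up the inclusion--exclusion. For any sub-alphabet $\cA\subseteq\{\NN,\EE,\SS,\WW\}$, write $J_\cA$ for the \gf\ of irreducible bridges all of whose steps lie in $\cA$; then
$$I=\sum_{|\cA|=3}J_\cA-\sum_{|\cA|=2}J_\cA+\sum_{|\cA|=1}J_\cA-J_\varnothing.$$
A case-by-case check of the six two-letter and four one-letter sub-alphabets is required. One has $J_{\{\EE,\SS\}}=J_{\{\NN,\WW\}}=tB_0/(1+tB_0)$ (irreducible $\EE\SS$-bridges, the second one by diagonal symmetry); $J_{\{\NN,\EE\}}=2t$ because every step of an $\{\NN,\EE\}$-walk increases the height and is therefore a separating step, leaving only the two single-step irreducible bridges; $J_{\{\EE,\WW\}}=J_{\{\NN,\SS\}}=t$ because self-avoidance forces such a walk to have length at most $1$; $J_{\{\SS,\WW\}}=J_\varnothing=0$ because a non-empty bridge must strictly gain height; and among singletons only $J_{\{\NN\}}=J_{\{\EE\}}=t$ are non-zero. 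Assembling these contributions yields
$$I=2I_\NN+2I_\WW-\frac{2tB_0}{1+tB_0}-2t,$$
and substituting into $W_\Delta=1/(1-I)$ produces the announced formula after simplification.

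I expect the only real obstacle to be the bookkeeping in this inclusion--exclusion: for each two-letter sub-alphabet one must determine by hand whether non-trivial irreducible bridges exist, using a mix of self-avoidance (to kill long $\{\EE,\WW\}$- and $\{\NN,\SS\}$-walks) and the observation that every $\{\NN,\EE\}$-step is a potential separator (so only single-step irreducible bridges survive there). Once these cases are disposed of, the diagonal symmetry halves the work and the sequence-of-irreducibles decomposition finishes the computation, exactly as in Proposition~\ref{prop:weakly-h}.
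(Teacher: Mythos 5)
Your proof is correct and takes essentially the same route as the paper: write $W_\Delta=1/(1-I)$ with $I$ the \gf\ of partially directed irreducible bridges, and compute $I$ by inclusion--exclusion, reducing everything to the sequence-of-irreducibles identities $tB_1=I_\NN/(1-I_\NN)$, $2tB_2=I_\WW/(1-I_\WW)$, $tB_0=J_{\{\EE,\SS\}}/(1-J_{\{\EE,\SS\}})$; the paper organizes the inclusion--exclusion by listing pairwise intersections of the four ``avoid one step'' classes, while you parameterize by sub-alphabets, but the case analysis is the same. One small imprecision: self-avoidance does not force an $\{\EE,\WW\}$-walk (or $\{\NN,\SS\}$-walk) to have length at most $1$ --- it forces the walk to be monotone (all-$\EE$ or all-$\WW$); only the all-$\EE$ walks are bridges, and irreducibility then cuts them down to the single step $\EE$, so $J_{\{\EE,\WW\}}=t$ as claimed, but for a slightly different reason.
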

\begin{proof}
    Let $\cI_\SS$ be the set of irreducible \EE\SS\WW-bridges, and let
  $I_\SS$ be the associated length \gf. Given that a
non-empty
 \EE\SS\WW-bridge is 
obtained by adding an \EE\ step at the end of a  \EE\SS\WW-pseudo-bridge,
and  a non-empty sequence of irreducible \EE\SS\WW-bridges, there holds
$$
t \PB_1= \frac {I_\SS}{1-I_\SS}.
$$
Define  similarly the sets $\cI_\NN$, $\cI_\EE$ and $\cI_\WW$, and the
associated series $I_\NN$, $I_\EE$ and $I_\WW$.  Finally, let
$\cI_{\EE\SS}$ (resp. $\cI_{\NN\WW}$) be the 
set of irreducible \EE\SS-bridges (resp. \NN\WW-bridges), and let
$I_{\EE\SS}$ (resp. $I_{\NN\WW}$) be the associated series. Then
$$
2t \PB_2= \frac {I_\EE}{1-I_\EE}
\quad\hbox{ and } \quad 
t \PB_0= \frac {I_{\EE\SS}}{1-I_{\EE\SS}}.
$$
(The factor 2 comes from the fact that a \NN\EE\SS-bridge may end with
a \NN\ or \EE\ step.)
By symmetry,
$I_\NN=I_\EE$,  $I_\WW=I_\SS$ and $\cI_{\EE\SS}=\cI_{\NN\WW}$. 
Moreover,
$$
\cI_\SS\cap \cI_\NN= \EE,\quad
\cI_\SS\cap \cI_\EE= \cI_{\EE\SS}, \quad
\cI_\SS\cap \cI_\WW= \emptyset, \quad
\cI_\EE \cap  \cI_\NN=  \NN + \EE,\quad
\cI_\EE\cap \cI_\WW=\NN,\quad
\cI_\WW\cap  \cI_\NN=\cI_{\NN\WW}.
$$
By an elementary inclusion-exclusion argument,  the \gf\ of partially directed irreducible bridges is
$$
I:=2I_\SS+2I_\EE -2I_{\EE\SS}- 2t=
 \frac{2t \PB_1}{1+  t\PB_1}
+ \frac{4t \PB_2}{1+ 2t \PB_2}
-\frac{2t \PB_0}{1+  t\PB_0}-2t.
$$
Hence the \gf\ of   bridges formed of partially directed irreducible bridges
is $ W_\Delta=\frac 1{1-I}$. The  result follows.
\end{proof}


\subsection{Asymptotic properties}
We obtain  for the  diagonal model 
 asymptotic results that are similar to those obtained in the horizontal model,
with a slightly smaller growth constant.
We have to confess that this contradicts our original intuition: since
in the horizontal 
model, two of the four classes of irreducible partially directed
bridges (namely, \EE\SS\WW\ and
\NN\EE\WW) are either trivial or degenerate, while in the
diagonal model, all four classes are non-trivial, we thought we had a
 chance to reach a better growth constant in the diagonal model.
This is unfortunately not the case. We  nonetheless
present this diagonal variant, because we believe it to be  a natural
attempt.
 We analyze below what makes the difference between the two growth
 constants, and this analysis shows that our hopes could just as
 well have come true.

\begin{Proposition}\label{prop:asympt-d}
The \gf\ $W_\Delta$ 
given by
Proposition~{\rm\ref{prop:weak-d}} is meromorphic in the disk 
  $\cD=\{|z|< \sqrt 2 -1\}$. It has a
  unique dominant pole in this disk,  at $\rho_1\simeq 0.3940$.
This pole is simple. Consequently,  the
  number of $n$-step bridges formed of  partially directed
irreducible bridges 
is asymptotically equivalent to $ \kappa\, {\mu}^n 
$,
with $\mu=1/\rho_1\simeq 2.5378$.

Let $N_n$ denote the number  of irreducible bridges  in a random
$n$-step bridge formed of  partially directed
irreducible bridges. The mean and
variance of $N_n$ satisfy:
$$
\E(N_n) \sim  \mathfrak m  \, n,  \quad  \quad
\Var(N_n) \sim  \mathfrak s ^2 \, n ,  
$$
where 
$$
 \mathfrak m  \simeq 0.395 \ \ 
\quad \hbox{and} \quad
\mathfrak s ^2 = 1\pm 2.10^{-3},
$$
 and the random variable
$
\frac{N_n-  \mathfrak m \, n}{\mathfrak s \sqrt n}
$
converges in law to  a standard normal distribution. In particular,
the average end-to-end distance, being bounded from below by
$\E(N_n)$, grows linearly with~$n$.
\end{Proposition}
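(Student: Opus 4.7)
The plan is to follow, mutatis mutandis, the argument that established Proposition~\ref{prop:asympt-h}. First, I will verify that $W_\Delta$ is meromorphic in the disk $\cD=\{|z|<\sqrt 2-1\}$. Each of the series $\PB_0,\PB_1,\PB_2$ counts a subclass of partially directed walks on the square lattice, and a routine computation analogous to Lemma~\ref{lem:TPQ} (based on the factorizations of Section~\ref{sec:culminant-heaps}) shows that the generating functions of the ambient partially directed walks (\NN\EE\SS-, \EE\SS\WW- and \EE\SS-walks in the diagonal sense) are rational with radius of convergence at least $\sqrt 2-1$. Hence $\PB_0,\PB_1,\PB_2$ are analytic in $\cD$. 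The series $I_\SS=t\PB_1/(1+t\PB_1)$, $I_\EE=2t\PB_2/(1+2t\PB_2)$ and $I_{\EE\SS}=t\PB_0/(1+t\PB_0)$ are then analytic in $\cD$ with non-negative Taylor coefficients, and so are $I=2I_\SS+2I_\EE-2I_{\EE\SS}-2t$ and $W_\Delta=1/(1-I)$.

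Second, I will locate the dominant pole. Since $I(t)$ has non-negative coefficients and is aperiodic (one checks $I=2t+\cdots$ with a nonzero $t^2$ contribution coming from e.g.\ $\EE\EE$), any pole of $W_\Delta$ of minimal modulus must be real, positive and simple. For existence and a numerical estimate of $\rho_1$, I use the bracketing scheme of Proposition~\ref{prop:asympt-h}: for a truncation order $n$ I set $I^-(t)=I_{\le n}(t)$ and $I^+(t)=I^-(t)+R_n(t)$, where the tail $R_n$ majorizes the omitted coefficients of $I$ by the tails of the three explicitly known ambient partially-directed series. Solving $I^-(\rho^+)=I^+(\rho^-)=1$ then brackets $\rho_1$, and taking $n$ large enough yields $\rho_1\simeq 0.3940$. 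The standard meromorphic transfer theorem then gives the asymptotics $\sim\kappa\,\mu^n$ with $\mu=1/\rho_1$.

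Third, the limit law for $N_n$ follows from the supercritical sequence schema applied to
\[
W_\Delta(t,x)=\frac{1}{1-xI(t)}.
\]
Since $I$ is analytic on a disk whose radius is strictly greater than $\rho_1$, Proposition~IX.7 of~\cite{flajolet-sedgewick} applies and produces a Gaussian limit with
\[
\mathfrak m=\frac{1}{\rho_1\, I'(\rho_1)},\qquad
\mathfrak s^2=\frac{I''(\rho_1)+I'(\rho_1)-I'(\rho_1)^2}{\rho_1\,I'(\rho_1)^3}.
\]
The numerical estimates then follow by combining the bracketing of $\rho_1$ with truncation bounds on $I'$ and $I''$. Finally, the end-to-end distance is bounded below by $N_n$ because each of the $N_n$ irreducible bridges contributes at least one unit of height (in the diagonal sense), and the height of the endpoint is itself bounded by the Euclidean distance to the origin.

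The main obstacle is the precision stated for $\mathfrak s^2$: the formula above combines $I(\rho_1)$, $I'(\rho_1)$ and $I''(\rho_1)$, and the interval $\mathfrak s^2=1\pm 2\cdot 10^{-3}$ is sensitive to the numerical quality of all three evaluations. Reaching this precision is not conceptually harder than the horizontal case of Proposition~\ref{prop:asympt-h}, but it requires pushing the truncation order far enough and propagating the tail bounds carefully through $I'$ and $I''$.
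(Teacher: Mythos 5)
Your proposal follows the paper's approach, which is itself just a pointer back to the proof of Proposition~\ref{prop:asympt-h}: write $W_\Delta=1/(1-I)$, observe that $I$ has non-negative coefficients and radius of convergence at least $\sqrt2-1$, bracket the unique positive root of $I(t)=1$ by truncation, and apply the supercritical-sequence schema to $W_\Delta(t,x)=1/(1-xI(t))$. The only modification the paper makes is that the majorant for the tail of $I$ is now $4T_{>n}(t)$ instead of $2T_{>n}(t)$, because in the diagonal model all four 3-letter alphabets $\NN\EE\SS$, $\NN\EE\WW$, $\NN\SS\WW$, $\EE\SS\WW$ yield nontrivial irreducible bridges, each class being counted by $T$ up to symmetry; your ``tails of the three explicitly known ambient series'' should be replaced by this single $4T_{>n}$ bound.

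One factual slip in your aperiodicity check: $\EE\EE$ is a bridge in the diagonal model but it is \emph{not} irreducible (it factors as $\EE\cdot\EE$), so $[t^2]I=0$; a quick check shows $[t^3]I=0$ as well, since a length-3 bridge must have height profile $0,1,2,3$ and is thus a product of single steps. Aperiodicity nevertheless holds because e.g.\ $\NN\WW\NN\NN$ (heights $0,1,0,1,2$) is a partially directed irreducible bridge of length~4, so $I=2t+ct^4+\cdots$ with $c>0$ and $\gcd(1,4)=1$. A second, minor, imprecision: the height of the endpoint is not ``bounded by'' the Euclidean end-to-end distance — for $(a,b)$ with $a,b>0$ one has $a+b>\sqrt{a^2+b^2}$ — but it is bounded by $\sqrt2$ times that distance, which is all that is needed for the linear-growth conclusion. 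Neither slip affects the structure of the argument, which coincides with the paper's.
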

\begin{proof}
 The arguments are  the same as in the proof of
Proposition~\ref{prop:asympt-h}. This series reads $W_\Delta=1/(1-I)$, where
$I$ counts partially directed irreducible bridges. The only change is
in the bounds we use on the series $I$:
$$
I^-(t) \le I(t) \le I^+(t),
$$
with
$$
I^-(t):= I_{\le n}(t)
\quad \hbox{ and } \quad 
I^+(t):= I_{\le n}(t) + 4T_{>n}(t)=  I_{\le n}(t) + 4T(t)- 4T_{\le n}(t),
$$
where
$T(t)$ is the \gf\ of \NN\EE\SS-walks, given in Lemma~\ref{lem:TPQ}.
%
\end{proof}

\noindent{\bf Remark.}  Hence the growth constant in the diagonal
model is a bit   smaller than in the horizontal model. This does not
seem to be predictible. The series of Propositions~\ref{prop:weakly-h}
and~\ref{prop:weak-d} respectively read
$$
W(t)=\frac 1 {1-I(t)} \quad \hbox{and} \quad W_\Delta(t)=\frac 1 {1-I_\Delta(t)} 
$$
where $I$ and $I_\Delta$ count irreducible partially directed bridges,
respectively in the horizontal and diagonal model. As $t$ increases
from $0$ to $\sqrt 2 -1$ (the radius of convergence of the series of
partially directed walks), $I_\Delta(t)=2t+O(t^2)$ first dominates
$I(t)=t+O(t^2)$, but the graphs of these two functions cross
before any of them reaches 1 (Fig.~\ref{fig:hor-diag}), so that $I(t)$ reaches 1
before $I_\Delta(t)$ does. The fact that the graphs cross
is consistent with our belief that $I$ has radius $\sqrt 2 -1\sim 0.41$ while
$I_\Delta$ has a larger radius of convergence, namely $1/\sqrt
5\sim 0.44$. But $I_\Delta(t)$  could just as well have reached 1 before
the crossing point.
\begin{figure}[ht]
\includegraphics[scale=0.2]{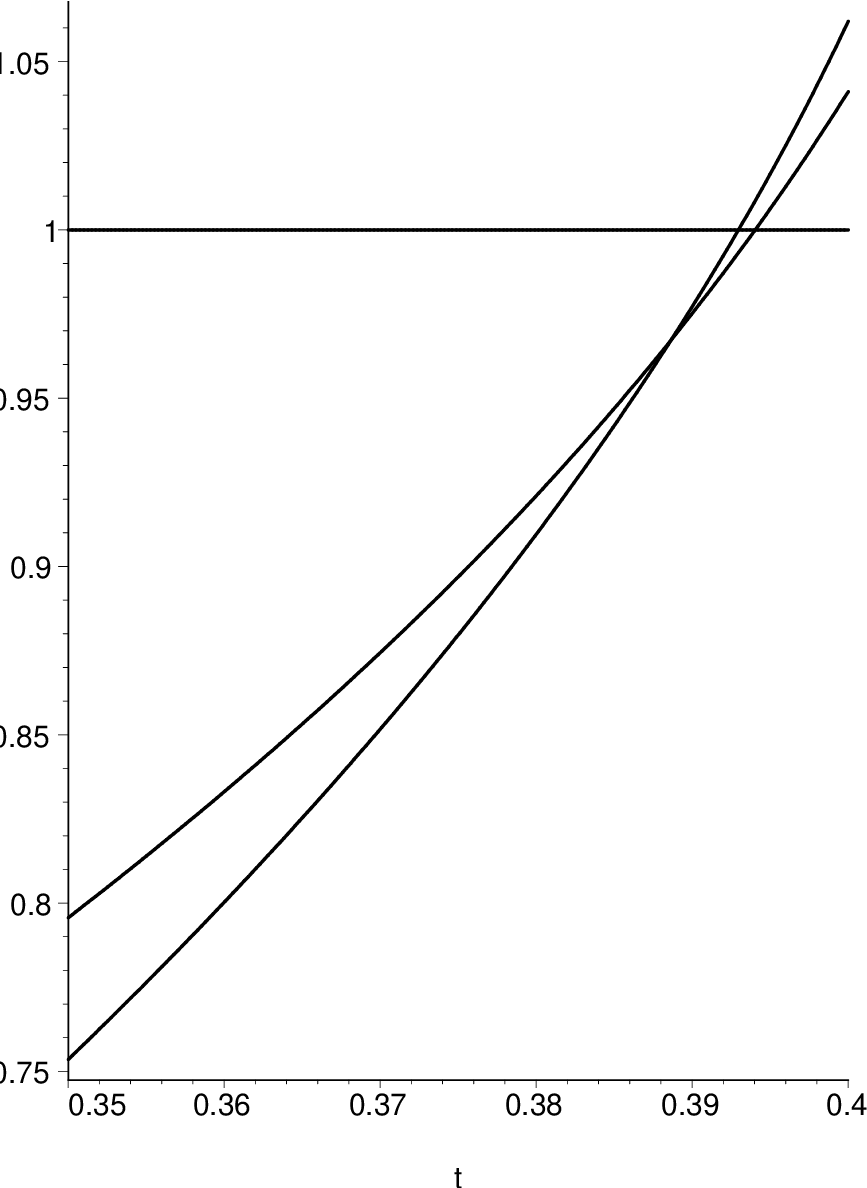}
\caption{The functions $I(t)$ and $I_\Delta(t)$ for $t \in
  (0.35,0.4)$. The function $I_\Delta(t)$ first dominates, but the
  graphs cross before the functions reach 1.}
\label{fig:hor-diag}
\end{figure}
\section{Final comments}
\label{sec:comments}
\subsection{One more way to count partially directed bridges}
We have presented in Sections~\ref{sec:culminant-kernel}
and~\ref{sec:culminant-heaps} two approaches to 
count partially directed bridges. Here, we discuss a third method,
 based on standard decompositions of lattice paths.
This approach involves some
guessing, whereas the two others don't. 
In the diagonal model,
it allows us to understand more combinatorially why the \gfs\ of \NN\EE\SS- and
\EE\SS\WW-bridges only differ by a factor $(2-t^2)^k$. 
Moreover, this approach is needed for random generation.

\medskip

We first discuss the horizontal model, that is, the enumeration of
\NN\EE\SS-bridges given by
Proposition~\ref{prop:culm-h}. 
We  say that a \NN\EE\SS-walk is an \emm
excursion, if it starts and ends at height $0$, and all its vertices
lie at a non-negative height. Let $E^{(k)}\equiv E^{(k)}(t)$ be the
length \gf\ of excursions of height at most $k$. As before, $\PB^{(k)}$
denotes the \gf\ of \NN\EE\SS-pseudo-bridges of height $k$.

\begin{figure}[ht]
\includegraphics[scale=0.7]{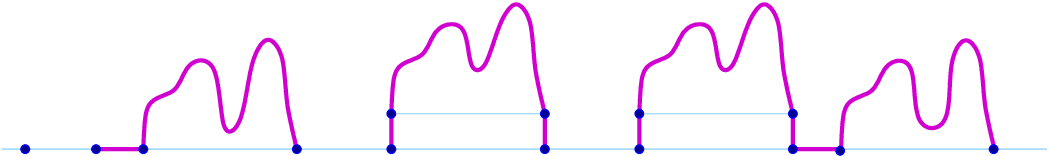}
\caption{Recursive decomposition of \NN\EE\SS-excursions and pseudo-bridges.}\label{fig:decomp-excursions}
\end{figure}

Excursions and pseudo-bridges can be factored in a standard way by
cutting them at their first (resp. last) visit at height $0$. These
factorizations are schematized in Fig.~\ref{fig:decomp-excursions}. They give:
\begin{itemize}
\item for excursions of bounded height, the recurrence relation
$$
E^{(k)}=1+ t E^{(k)} + t^2 \left( E^{(k-1)}-1\right) +  t^3 \left(
E^{(k-1)}-1\right)  E^{(k)},
$$
with the initial condition $E^{(-1)}=1$,
\item for pseudo-bridges of height $k$, the recurrence relation
$$
\PB^{(k)}= \left (1 + tE^{(k)}\right) t \PB^{(k-1)},
$$
with the initial condition $\PB^{(0)}=1/(1-t)$.
\end{itemize}
It is then straightforward to \emm check, by induction on $k$ that
$$
E^{(k)}= \frac 1 t \left( \frac{G_{k-1}}{G_k}-1\right)
\quad \hbox{ and  }\quad \PB^{(k)}= \frac{t^k}{G_k},
$$
where $G_k$ is the sequence of polynomials defined in
Proposition~\ref{prop:culm-h}. Of course, these expressions have to be
guessed --- which is actually not very difficult using a
computer algebra system.

\bigskip

Let us now discuss  the diagonal model. 
We will recover Proposition~\ref{prop:culm-d-1} (for \EE\SS\WW-bridges, or,
equivalently, \NN\SS\WW-bridges) and Proposition~\ref{prop:culm-d-2}
(for \NN\EE\SS-bridges).
%
 Let $E_1^{(k)}$ and $E_2^{(k)}$ be the generating functions of \NN\SS\WW- and
\NN\EE\SS-excursions, respectively, of height at most~$k$.  These
two series coincide: indeed, a \NN\EE\SS-excursion is
obtained by reading  a \NN\SS\WW-excursion {backwards}, reversing
 each step, and  this  does not change the height of the excursion.

\begin{figure}[ht]
\includegraphics[scale=0.7]{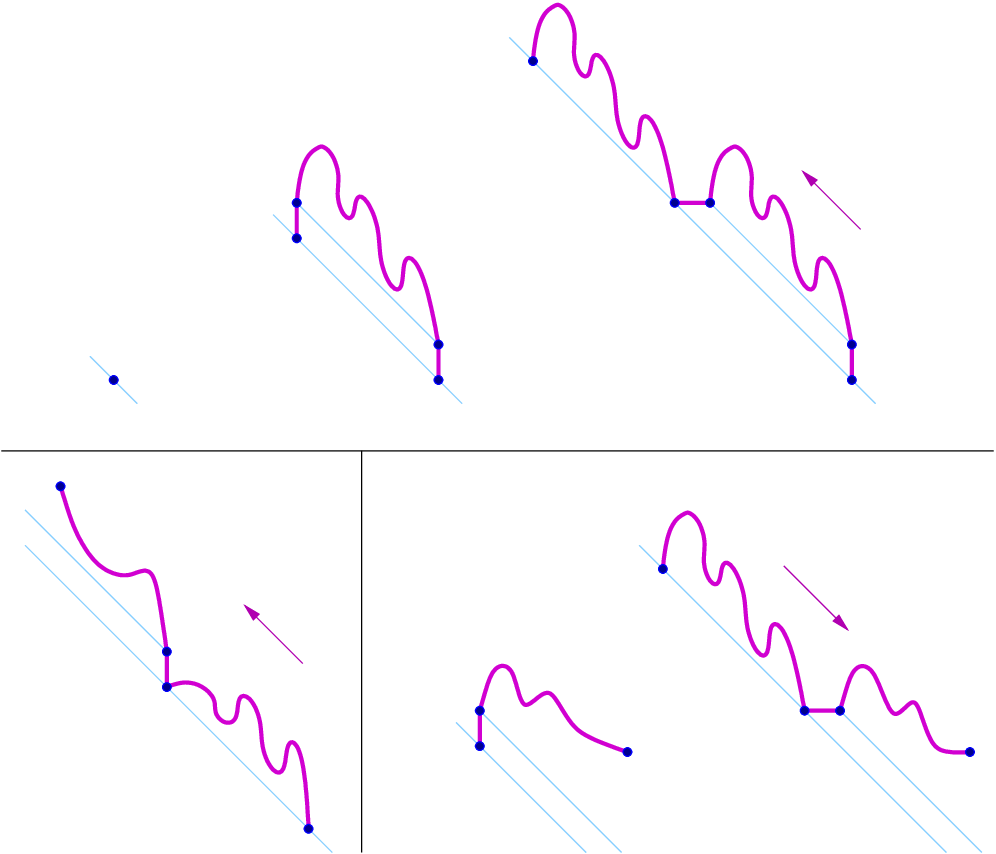}
\caption{\emm Top,: Recursive decomposition of \NN\SS\WW-excursions (or
  equivalently, of \NN\EE\SS-excursions).
\emm Bottom,: Recursive decomposition of \NN\SS\WW-pseudo-bridges (left)
and \NN\EE\SS-pseudo-bridges (right).}
\label{fig:decomp-excursions-d}
\end{figure}

Partially directed excursions and pseudo-bridges can be factored as in
the horizontal case, as illustrated in
Fig.~\ref{fig:decomp-excursions-d}. This gives:
\begin{itemize}
\item 
for \NN\SS\WW-excursions, the recurrence relation
\[E_1^{(k)} = 1 + t^2\bigl(E_1^{(k-1)} - 1\bigr) +
t^2E_1^{(k-1)}E_1^{(k)},\]
with the initial condition $E_1^{(0)}=1$,
\item for \NN\SS\WW-pseudo-bridges,
\beq\label{psb1}
B_1^{(k)} = D_1^{(k)}tB_1^{(k-1)},
\eeq
where  $D_1^{(k)}$ counts \NN\SS\WW-excursions of height at most $k$ \emph{not
ending with a \SS\  step}, and $B_1^{(0)}=1$,
\item  for \NN\EE\SS-pseudo-bridges,
\beq\label{psb2}
B_2^{(k)}=\bigl(1 + E_1^{(k)}\bigr)tB_2^{(k-1)},
\eeq
with the initial condition $B_2^{(0)} = 1$.
\end{itemize}
As in the horizontal model, it is easy to check by induction that
$$
E_1^{(k)}= E_2^{(k)}= 
(2-t^2)\frac{G_{k-1}}{G_k} -1
\quad \hbox{ and } \quad 
B_2^{(k)} = \frac {(2-t^2)^k t^k}{G_k},
$$
where $G_k$ is the sequence of polynomials defined in
Proposition~\ref{prop:culm-d-1}.
This gives Proposition~\ref{prop:culm-d-2}.

In order to prove Proposition~\ref{prop:culm-d-1}, we will  prove
combinatorially that $1 + E_1^{(k)} = 
(2-t^2)D_1^{(k)}$. By comparing~\eqref{psb1} and~\eqref{psb2}, this
will establish the link $B_2^{(k)}=(2-t^2)^k
B_1^{(k)}$ between the two types of pseudo-bridges.

First, let $u$ be a \NN\SS\WW-excursion of height at most $k$, ending with
\NN\WW. Writing $u=v\NN\WW$, we see that $v$ is an excursion that does not end
with a \SS\  step; therefore,  excursions ending with
\NN\WW\ are counted by $t^2D_1^{(k)}$.

Let now $u$ be an arbitrary \NN\SS\WW-excursion of height at most $k$. We
distinguish two cases:
\begin{itemize}
\item  either $u$ does not end with a \SS\  step; such excursions are
counted by $D_1^{(k)}$;
\item  or $u$ reads $v\SS$; then $v$ does not end with a \NN\ 
step. Let $u'=v\WW$: then $u'$ is an excursion ending with \WW\  but not with
\NN\WW. According to the above remark, such excursions are counted by
$D_1^{(k)} - 1 - t^2D_1^{(k)}$.
\end{itemize}
Putting this together, we find  $E_1^{(k)} = (2-t^2)D_1^{(k)} - 1$,
which concludes the proof.

\subsection{Random generation of weakly directed bridges}

  \begin{minipage}[b]{0.7\linewidth}
We now present an algorithm for the random generation of weakly
directed
bridges in the horizontal model.
This algorithm is a \emph{Boltzmann sampler}~\cite{duchon}. 
That is, it involves a parameter $x$, and outputs a walk $w$ with probability
\[\mathbb P(w)=\frac{x^{\lvert w\rvert}}{C(x)},\]
where $C(x)$ is the generating function of the 
class of walks under consideration. Of course, $x$ has to be smaller
than the radius of convergence of $C$. 
 The average length of the output walk is
$$
\E(|w|)= \frac{x C'(x)}{C(x)}.
$$
The parameter $x$ 
is chosen according to the desired  output length.

Boltzmann samplers have  convenient properties. For instance, given
Boltzmann samplers $\Gamma_{\mathcal A}$ and $\Gamma_{\mathcal B}$ for
two classes $\mathcal A$ and $\mathcal B$, it is easy to derive  
Boltzmann samplers for the classes $\mathcal A+\mathcal B$ 
(assuming $\mathcal A\cap\mathcal B=\varnothing$)
and
$\mathcal A\times\mathcal B$. In the former case, one calls
$\Gamma_{\mathcal A}$ with probability $A(x)/(A(x)+B(x))$, and
$\Gamma_{\mathcal B}$ with probability $B(x)/(A(x)+B(x))$. In the
latter case, the sampler is just $(\Gamma_{\mathcal A},\Gamma_{\mathcal B})$.  
If the base samplers run in linear time with
respect to the size of the output, the new samplers also run in linear time.

Moreover, if $\mathcal B\subseteq\mathcal A$, and  we 
have  a Boltzmann sampler for $\mathcal A$,  then a
\emph{rejection scheme} 
provides  a Boltzmann sampler for
$\mathcal B$: we keep drawing elements of
$\mathcal A$ until we find an element of $B$.

Finally, if
$\mathcal A=\mathcal B\times\mathcal C$, then we obtain a Boltzmann
sampler for the class $\mathcal B$ by sampling a pair $(b,c)$ and
discarding $c$.

\bigskip
\begin{center}
  \includegraphics[height=25mm]{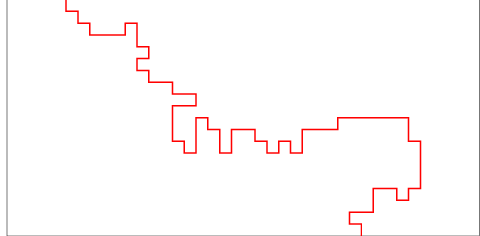}

{\sc Figure 14.} Right: A random weakly directed bridge drawn with our
algorithm. Above: zoom on a portion of the bridge.
\end{center}
\end{minipage}
 \begin{minipage}[b]{0.30\linewidth}

\ \hskip 10mm \includegraphics[height=15cm]{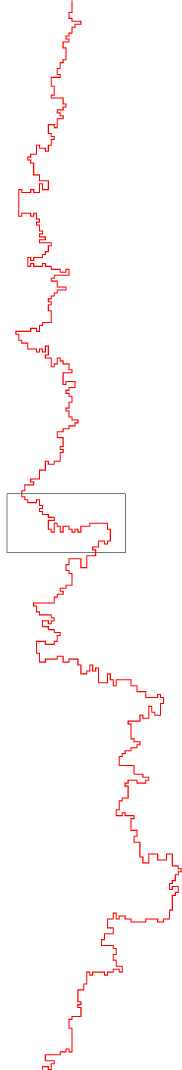}
\end{minipage}

\bigskip

By Proposition~\ref{prop:equiv}, a weakly directed bridge is
 a sequence of partially directed irreducible bridges. 
We build our algorithm in four steps, in which we sample objects of
increasing complexity.

\begin{description}
\item[Step 1] The first step is to sample partially directed excursions. Let
$\mathcal E$ be the language of nonempty \NN\EE\SS-excursions. As shown
  by Fig.~\ref{fig:decomp-excursions}, this language is determined by the
following unambiguous grammar:
\[\mathcal E = \EE(1+\mathcal E) + \NN\mathcal E\SS
+ \NN\mathcal E\SS\EE(1+\mathcal E).\]
We use this grammar to derive, first the generating function $E(x)$ of partially directed excursions:
$$
E(x)=\frac{1-x-x^2-x^3-\sqrt{(1-x^4)(1-2x-x^2)}}{2x^3},
$$
 and then a (recursive) Boltzmann sampler for these excursions
(see~\cite[Section~3]{duchon}).

\item[Step 2] The next step is to sample positive 
\NN\EE\SS-walks, defined in
Section~\ref{sec:gf1}. 
More precisely, let $\mathcal P_\NN$ be the language of positive
\NN\EE\SS-walks that end with a \NN\  step\footnote{One could just as well work with \emm general, positive walks, but it can be seen that this restriction  improves the complexity by a constant factor.}.
We decompose these walks in the same way as bridges in
Fig.~\ref{fig:decomp-excursions}, by cutting them after their last visit at
height~$0$. 
We have the following unambiguous grammar: 
\[\mathcal P_\NN = (\NN+\EE\NN+\mathcal E\EE\NN)(1+\mathcal P_\NN).\]
Given the Boltzmann sampler constructed for excursions in Step~1, we thus obtain a Boltzmann sampler 
for these positive walks. Their \gf\  is
\[P_\NN(x)=\frac12\Biggl(\sqrt{\frac{1+x+x^2+x^3}{(1-x)(1-2x-x^2)}}-1\Biggr).\]

\item[Step 3] The object of this step is to sample irreducible
\NN\EE\SS-bridges; this is  less routine
 than the two previous steps, as we do not have a grammar for these walks. To
do this, we decompose the walks of $\mathcal P_\NN$ into irreducible factors (see
Definition~\ref{def:irreducible}). Let 
$\mathcal I_\EE$ be the language of irreducible
\NN\EE\SS-bridges, and $\mathcal R$  the language of
irreducible positive \NN\EE\SS-walks ending with a \NN\  step 
that  are not
bridges. Performing the decomposition and checking whether the first factor is
a bridge or not, we find:
\[\mathcal P_\NN = \mathcal R + \mathcal I_\EE(1+\mathcal P_\NN).\]
We  use this to construct a Boltzmann sampler for irreducible
\NN\EE\SS-bridges: Using a rejection scheme, we first derive from
the Boltzmann sampler of $\cP_\NN$  a Boltzmann
sampler for the language $\mathcal I_\EE(1+\mathcal P_\NN)$;  these walks factor  into
an irreducible bridge, followed by a positive walk. We then simply discard the
latter walk, keeping only the irreducible bridge.

We construct symmetrically a Boltzmann sampler for the language $\mathcal
I_\WW$ of irreducible \NN\SS\WW-bridges. We use
another rejection scheme to sample elements of $\mathcal I_\WW\setminus\NN$.

\item[Step 4] Finally, the language $\mathcal W$ of weakly directed bridges
satisfies, 
as explained in the proof of Proposition~\ref{prop:weakly-h}:
\[\mathcal W = 1 + \mathcal I_\EE\mathcal W + (\mathcal I_\WW\setminus\NN)\mathcal W.\]
From this, we obtain a Boltzmann sampler for weakly directed bridges.
\end{description}

\begin{Proposition}
Let $\varepsilon$ be a fixed positive real number. The random generator
described above, with the parameter $x$ chosen such that $xW'(x)/W(x)=n$,
outputs a weakly directed bridge with a length between
$(1-\varepsilon)n$ and $(1+\varepsilon)n$ in average time $\mathrm O(n)$.
\end{Proposition}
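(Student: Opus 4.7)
My plan is to analyze the algorithm step by step, checking that (i) the calibration of $x$ is well-defined, (ii) each inner sampler, together with each rejection scheme, has a rejection probability bounded away from~$1$ as $x\to \rho$, and (iii) standard supercritical-sequence arguments deliver both the length concentration and the linear time bound.

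First I would treat the calibration. The map $x\mapsto xW'(x)/W(x)$ is continuous and increasing on $(0,\rho)$, vanishes at $0$, and tends to $+\infty$ as $x\to\rho^-$ because $W$ has a simple pole at $\rho$ (Proposition~\ref{prop:asympt-h}). Hence for every large enough $n$ there is a unique $x=x_n\in (0,\rho)$ with $xW'(x)/W(x)=n$, and $x_n\to\rho$. All generating functions that appear in the construction ($E$, $P_\NN$, $I_\EE$, $I_\WW$, $W$, as well as $T,P,Q,B$ from Lemma~\ref{lem:TPQ} and Proposition~\ref{prop:weakly-h}) converge at $\rho$, since $\rho<\sqrt2-1$ is strictly smaller than the radius of convergence of $T$.

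Next I would check that every rejection step has a success probability bounded below by a positive constant uniformly in $x$ close to $\rho$. The generating function decompositions in Steps~1--4 translate into the following acceptance ratios: for sampling $\mathcal I_\EE$ from $\mathcal P_\NN$, the ratio $I_\EE(x)(1+P_\NN(x))/P_\NN(x)$; for sampling $\mathcal I_\WW\setminus\NN$ from $\mathcal I_\WW$, the ratio $1-x/I_\WW(x)$; and for the decomposition $\mathcal W=1+\mathcal I_\EE\mathcal W+(\mathcal I_\WW\setminus\NN)\mathcal W$ itself, the branching probabilities sum to $1$ by construction. Each of these ratios is a continuous function of $x$ that is strictly positive at $x=\rho$ (all series involved have positive finite values there, and in particular $I_\EE(\rho)+I_\WW(\rho)-\rho=I(\rho)=1$). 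Therefore there exists $c>0$ such that each rejection succeeds with probability at least $c$ whenever $x$ is close enough to $\rho$. Combining this with the standard fact (see~\cite{duchon}) that the recursive Boltzmann sampler for $\mathcal E$ (and hence for $\mathcal P_\NN$) produced from its grammar runs in expected time linear in the output size, one obtains Boltzmann samplers for $\mathcal I_\EE$, $\mathcal I_\WW\setminus\NN$ and $\mathcal W$ that still run in expected time $O(|w|)$ on an output $w$.

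The final step is to control the output length. Let $L$ denote the random length of a single call to the Boltzmann sampler for $\mathcal W$. By the choice of $x_n$, $\mathbb E[L]=n$. Since $\mathcal W$ is a supercritical sequence $\operatorname{Seq}(\mathcal I)$ with $I$ analytic on a disk strictly larger than the one of radius $\rho$, Proposition~IX.7 of~\cite{flajolet-sedgewick} (already invoked in Proposition~\ref{prop:asympt-h}) applies and shows that $L$ satisfies a central limit theorem with variance $\Theta(n)$, so that $\mathbb P\bigl(L\in((1-\varepsilon)n,(1+\varepsilon)n)\bigr)\to 1$ as $n\to\infty$. Rejecting outputs whose length falls outside this window thus requires, on average, only $1+o(1)$ attempts. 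Writing $L_1,L_2,\dots$ for the lengths of the successive attempts and $N$ for the first index with $L_N$ in the window, the total expected time is bounded by a constant times $\mathbb E\bigl[\sum_{i=1}^N L_i\bigr]=\mathbb E[N]\cdot\mathbb E[L]=(1+o(1))n=O(n)$ by Wald's identity.

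The main obstacle in this proof is the verification that every rejection probability stays uniformly bounded away from zero; once this is done, the rest is a direct application of the supercritical sequence machinery and of Wald's identity, but the verification requires inspecting each of the several grammars and rejection schemes used in Steps~1--4 and checking positivity of the relevant ratios at the pole $\rho$, which is only possible because $\rho$ lies strictly inside the common disk of analyticity of the auxiliary series.
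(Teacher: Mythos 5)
Your overall strategy (check calibration, bound rejection probabilities away from zero, control the output length) mirrors the paper's argument, and Steps~1--2 of your reasoning are sound. There is, however, a genuine error in the length-concentration step.

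You invoke Proposition~IX.7 of~\cite{flajolet-sedgewick} (the supercritical sequence schema) to claim that the length~$L$ of a single Boltzmann draw from~$\mathcal W$ has variance $\Theta(n)$ and satisfies a CLT, so that $\mathbb P\bigl(L\in((1-\varepsilon)n,(1+\varepsilon)n)\bigr)\to1$. This is a conflation of two different random variables. Proposition~IX.7 gives a CLT for the number of \emph{components} ($N_n$) of a \emph{uniformly random object of fixed size~$n$}, which is what Proposition~\ref{prop:asympt-h} uses. It says nothing about the distribution of the \emph{size} $L$ of a \emph{Boltzmann-sampled} object. Since $W$ has a simple pole at~$\rho$, the Boltzmann distribution of $L$ at parameter $x$ close to~$\rho$ is approximately geometric, with both mean and standard deviation of order~$n$: it is \emph{not} concentrated, and $\mathbb P\bigl(L\in((1-\varepsilon)n,(1+\varepsilon)n)\bigr)$ tends to a positive constant depending on~$\varepsilon$ (roughly $e^{-(1-\varepsilon)}-e^{-(1+\varepsilon)}$), not to~$1$. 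The correct tool here is Theorem~6.3 of~\cite{duchon}, which is precisely the statement that for singular exponent~$-1$, approximate-size rejection sampling succeeds within $O(1)$ expected trials; this is what the paper cites. Your final conclusion ($O(n)$ total expected time via a Wald-type bound) survives once one replaces ``probability $\to1$'' by ``probability bounded away from~$0$,'' but the intermediate claim as stated is false and the reference does not support it.

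Two smaller points. First, in verifying that each rejection succeeds with probability bounded below, it is worth noting (as the paper does) that this constant must be \emph{uniform in~$x$} near~$\rho$ and that the average length of a drawn positive walk is bounded independently of~$x$ because $P_\NN$ has radius $\sqrt2-1>\rho$; you assert this but should tie it to the fact that the cost of a single draw of~$\mathcal P_\NN$ is $O(1)$ uniformly, otherwise the per-component cost could grow with~$n$. Second, the paper controls the \emph{conditional} expected running time given output length~$m$, which is a slightly stronger (and cleaner) statement than the unconditional Wald bound; both yield the claimed $O(n)$.
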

\begin{proof}
Let $x>0$ be  smaller than the radius of convergence $\rho$ of the generating
function $W$, given by Proposition~\ref{prop:asympt-h}.
We first prove that if our algorithm outputs a walk of length $m$, it has, on
average, run in time $\mathrm O(m)$, independently of the parameter $x$.

The radius of convergence of the generating function $P_\NN(x)$ 
is  $\sqrt2-1$,
and is therefore larger than $\rho$. Hence the average length of a
positive walk drawn according to the Boltzmann distribution of
parameter $x$, being $xP_\NN'(x)/P_\NN(x)$,
is bounded from above by  $\rho P_\NN'(\rho )/P_\NN(\rho )$ 
(see~\cite[Prop.~2.1]{duchon}), which is 
 {independent of  $x$}. 
In particular, the
algorithm described in Step~2 runs in average constant time
(and the average length of the output walk is bounded).

Testing whether a positive walk is in $\mathcal R$ can be done in linear time.
Moreover, the probability of success in Step~3 is
$$
I_\EE(x)\left(1+1/P_\NN(x)\right)\ge x\left(1+1/P_\NN(x)\right)= \frac 1 2 \left( \sqrt{(1-x^4)(1-2x-x^2)} +1-x+x^2+x^3\right)\ge \sqrt 2 -1.
$$
Thus the average number of trials necessary to draw a
 walk of $\cI_\EE(1+\cP_\NN)$ is bounded by a constant independent of $x$.
Therefore, the algorithm that outputs walks of $\cI_\EE$ also runs in
average constant time. 
 The probability to draw in this step a walk of
$\EE\NN(1+\cP_\NN)$ is $x^2\left(1+1/P_\NN(x)\right)$, which is
bounded from below by $x( \sqrt 2 -1)$. Since in practice 
$x$ will be away from $0$, the probability to obtain an element of $\cI_\EE$
distinct from \NN\ is bounded from below by a positive constant, so
that we generate a walk of $\cI_\EE\setminus\NN$ (or, symmetrically,
of $\cI_\WW\setminus\NN$) in average constant time.

Finally, the number of irreducible bridges in a weakly directed 
bridge  of length $m$ is less than $m$, so that the final algorithm runs in
average time $\mathrm O(m)$.

\bigskip

We now fix $n$ and $\varepsilon$, and choose $x$ as described above (this
is possible since $xW'(x)/W(x) \rightarrow \infty$ as $x\rightarrow
\rho$). We call our sampler of bridges until the length $m$ of the
output bridge is
in the required interval. Theorem~6.3 in~\cite{duchon} implies that,
asymptotically in $n$, a bounded number of trials will
suffice. Indeed, the series $W(t)$ is analytic in a $\Delta$-domain,
with a singular exponent~$-1$ (see Proposition~\ref{prop:asympt-h}).
\end{proof}

Fig.~14
shows a weakly directed 
bridge sampled using our algorithm, with a zoom on a portion of it.



\bibliographystyle{plain}
\bibliography{saw.bib}

\end{document}